\numberwithin{equation}{section}
\newtheorem{theorem}{Theorem}[section]
\newtheorem{lemma}[theorem]{Lemma}
\newtheorem{definition}[theorem]{Definition}
\newtheorem{remark}[theorem]{Remark}
\newtheorem{proposition}[theorem]{Proposition}
\newtheorem{corollary}[theorem]{Corollary}
\newenvironment{Proof}{\removelastskip\par\medskip 
\noindent{\em Proof.}
\rm}{\penalty-20\null\hfill$\square$\par\medbreak}
\newcommand{\Q}{\mathbb{Q}}
\newcommand{\N}{\mathbb{N}}
\newcommand{\R}{\mathbb{R}}
\newcommand{\Leb}[1]{{\mathscr L}^{#1}} 
\renewcommand{\O}{\Omega}
\newcommand{\BorelSets}[1]{\mathcal B(#1)}
\newcommand{\Probabilities}[1]{\mathscr P\bigl(#1\bigr)} 
\newcommand{\Measuresp}[1]{\mathscr M_+\bigl(#1\bigr)} 
\renewcommand\div{\operatorname{div}}
\newcommand{\supp}{\operatorname{supp}}
\newcommand{\hit}[2]{{\sf h}_{#1}({#2})}
\DeclareMathOperator*{\esssup}{ess\,sup}
\newcommand{\res}{\mathop{\hbox{\vrule height 7pt width .5pt depth 0pt
\vrule height .5pt width 6pt depth 0pt}}\nolimits}
 \newcommand{\bb}{{\mbox{\boldmath$b$}}}
 \newcommand{\cc}{{\mbox{\boldmath$c$}}}
 \newcommand{\ee}{{\mbox{\boldmath$e$}}}
 \newcommand{\tauV}{{\kern-3pt\tau}}
 \newcommand{\sbb}{{\mbox{\scriptsize\boldmath$b$}}}
 \newcommand{\sXX}{{\mbox{\scriptsize\boldmath$X$}}}
 \newcommand{\sYY}{{\mbox{\scriptsize\boldmath$Y$}}}
 \newcommand{\ssXX}{{\scriptscriptstyle \mbox{\scriptsize\boldmath$X$}}}
   \newcommand{\sox}{\Omega,{\mbox{\scriptsize\boldmath$X$}}}
 \newcommand{\XX}{{\mbox{\boldmath$X$}}}
  \newcommand{\YY}{{\mbox{\boldmath$Y$}}}
 \newcommand{\oVVVk}{\overline{\mbox{\boldmath$V$}}\kern-3pt}
 \newcommand{\tVVVk}{\tilde{\mbox{\boldmath$V$}}\kern-3pt}
\newcommand{\WW}{{\mbox{\boldmath$W$}}}
 \newcommand{\llambda}{{\mbox{\boldmath$\lambda$}}}
 \newcommand{\seeta}{{\mbox{\scriptsize\boldmath$\eta$}}}
 \newcommand{\eeta}{{\mbox{\boldmath$\eta$}}}
 \newcommand{\bbb}{b}
 \newcommand{\eps}{\varepsilon}
\newcommand{\BB}{{\mbox{\boldmath$B$}}}
\begin{document}

\title{Existence and uniqueness of maximal regular\\
flows for non-smooth vector fields
}
\date{}

\author{Luigi Ambrosio\
   \thanks{Scuola Normale Superiore, Pisa. email: \textsf{l.ambrosio@sns.it}}
   \and
   Maria Colombo\
   \thanks{Scuola Normale Superiore, Pisa. email: \textsf{maria.colombo@sns.it}}
 \and
   Alessio Figalli\
   \thanks{University of Texas at Austin. email:
   \textsf{figalli@math.utexas.edu}}
   }

\maketitle

\begin{abstract}
In this paper we provide a complete analogy between the Cauchy-Lipschitz and the DiPerna-Lions theories for ODE's,
by developing a local version of the DiPerna-Lions theory. More precisely,
we prove existence and uniqueness of a maximal regular
flow for the DiPerna-Lions theory using only local regularity and summability assumptions on the vector field, in analogy with the classical theory, which
uses only local regularity assumptions. We also study the behaviour of the ODE trajectories before the maximal existence time. Unlike the Cauchy-Lipschitz
theory, this behaviour crucially depends on the nature of the bounds imposed on the spatial divergence of the vector field. In particular, a global assumption
on the divergence is needed to obtain a proper blow-up of the trajectories.
\end{abstract} 

\tableofcontents

\section{Introduction}

Given a vector field $\bb(t,x)$ in $\R^d$,
the theory of DiPerna-Lions, introduced in the seminal paper \cite{lions}, provides existence and uniqueness of the
flow (in the almost everywhere sense, with respect to Lebesgue measure $\Leb{d}$) under weak regularity assumptions on $\bb$,
for instance when $\bb(t,\cdot)$ is Sobolev \cite{lions} or $BV$
\cite{ambrosio} and satisfies global bounds on the divergence. In this respect, this theory could be considered
as a weak Cauchy-Lipschitz theory for ODE's. This analogy is confirmed by many global existence results, by a kind
of Lusin type approximation of DiPerna-Lions flows by Lipschitz flows \cite{amcriman,crippade}, and even by differentiability
properties of the flow \cite{Lebris}. However, this analogy is presently not perfect, and the main aim of this paper is to fill this gap.

Indeed, the Cauchy-Lipschitz theory is not only pointwise but also purely local, meaning that existence and uniqueness
for small intervals of time depend {\it only} on local regularity properties of the vector fields $\bb(t,x)$. On the other hand,
not only the DiPerna-Lions theory is an almost everywhere theory (and this really seems to be unavoidable) but also the
existence results for the flow depend on {\it global} in space growth estimates on $|\bb|$, the most typical one being
\begin{equation}\label{eqn:globalDP}
\frac{|\bb(t,x)|}{1+|x|}\in L^1\bigl((0,T);L^1(\R^d)\bigr)+L^1\bigl((0,T);L^\infty(\R^d)\bigr).
\end{equation}
This is in contrast with the fact that the so-called ``renormalization property'', which plays a key role in the theory,
seems to depend only on local properties of $\bb$, because it deals with distributional solutions to a continuity/transport
equation with a source term: as a matter of fact, it is proved using only local regularity properties of $\bb$.

Given an open set $\O\subset\R^d$, in this paper we consider vector fields $\bb:(0,T)\times\O\to\R^d$ satisfying only the 
local integrability property $\int_0^T\int_{\O'}|\bb|dxdt<\infty$ for all $\O'\Subset\O$, a local one-sided bound on the distributional
divergence, and the property
that the continuity equation with velocity $\bb$ is well-posed in the class of nonnegative bounded and
compactly supported functions in $\O$. As illustrated in Remark~\ref{rmk:manycases}, this last assumption is fulfilled in many cases
of interest and it is known to be deeply linked to the uniqueness of the flow; in addition, building on the superposition principle 
(Theorem~\ref{thm:superpo}), it is proved in the appendix that even this assumption is purely  local, as well as the other two ones
concerning integrability and bounds on divergence. 

Under these three assumptions we prove existence
of a unique {\it maximal regular} flow $\XX(t,x)$ in $\O$, defined up to a maximal time $T_{\sox}(x)$ which is positive $\Leb{d}$-a.e.
in $\O$, with 
\begin{equation}\label{eqn:blowupintro}
\limsup_{t\uparrow T_{\sox}(x)}V_\O(\XX(t,x))=\infty\qquad\text{for $\Leb{d}$-a.e. $x\in \{T_{\sox}<T\}$.}
\end{equation}
Here $V_\O:\O\to [0,\infty)$ is a given continuous ``confining potential'', namely with $V(x)\to \infty$ as $x\to\partial\O$; hence, 
\eqref{eqn:blowupintro} is a synthetic way to state that, for any $\O'\Subset\O$, $\XX(t,x)$ does not intersect $\O'$ for $t$
close to $T_{\sox}(x)$.  

In our axiomatization, which parallels the one of \cite{ambrosio} and slightly differs from the one of the
DiPerna-Lions theory (being only based on one-sided bounds on divergence and independent of the
semigroup property), ``maximal'' refers to \eqref{eqn:blowupintro}, while ``regular'' means the existence of constants $C(\O',\XX)$ such that
\begin{equation}\label{eqn:regularmeans}
\int_{\O'\cap\{{\sf h}_{\O'}>t\}}\phi(\XX(t,x))\,dx\leq C(\O',\XX)\int_{\R^d}\phi(y)\,dy
\qquad\text{for all $\phi\in C_c(\R^d)$ nonnegative}
\end{equation}
for all $t\in [0,T]$, $\O'\Subset\O$, where ${\sf h}_{\O'}(x)\in [0,T_{\sox}(x)]$ is the first time that $\XX(\cdot,x)$ hits $\R^d \setminus \O'$.
Under global bounds on the divergence, \eqref{eqn:regularmeans} can be improved to
\begin{equation}\label{eqn:regularmeans2}
\int_{\O\cap\{T_{\sox}>t\}}\phi(\XX(t,x))\,dx\leq C_*\int_{\R^d}\phi(y)\,dy
\qquad\text{for all $\phi\in C_c(\R^d)$ nonnegative}
\end{equation}
for all $t\in [0,T]$,
but many structural properties can be proved with \eqref{eqn:regularmeans} only.

Uniqueness of the maximal regular flow 
follows basically from the ``probabilistic'' techniques developed in \cite{ambrosio}, which allow one to transfer uniqueness
results at the level of the PDE (the continuity equation), here axiomatized, into uniqueness results at the level of the ODE. 
Existence follows by analogous techniques; the main new difficulty here is that even if we truncate $\bb$ with a
$C^\infty_c(\O)$ cut-off function, the resulting vector field has not divergence in $L^\infty$ (just $L^1$, actually, when
$|\bb_t|\notin L^\infty_{\rm loc}(\O)$), 
hence the standard theory is not applicable. Hence, several new ideas and techniques need to be introduced to handle this new situation. These results are achieved
in Section~\ref{sec:local}.

Besides existence and uniqueness, we prove a natural semigroup property for $\XX$ and for $T_{\sox}$ and some
additional properties which depend on \emph{global} bounds on the divergence, more precisely on \eqref{eqn:regularmeans2}. 
The first property, well known in the classical setting, is properness of the blow-up, namely this enforcement of \eqref{eqn:blowupintro}:
\begin{equation}\label{eqn:blowupintrobis}
\lim_{t\uparrow T_{\sox}(x)}V_\O(\XX(t,x))=\infty\qquad\text{for $\Leb{d}$-a.e. $x\in \{T_{\sox}<T\}$.}
\end{equation}
In other terms, for any $\O'\Subset\O$ we have that $\XX(t,x)\notin\O'$ for $t$ sufficiently close to $T_{\sox}(x)$.

In $\O=\R^d$, $d\geq 2$, we also provide an example of an autonomous Sobolev vector field
showing that \eqref{eqn:blowupintro} cannot be improved to \eqref{eqn:blowupintrobis} when only local bounds on divergence
are present. We also discuss the 2-dimensional case for $BV_{\rm loc}$ vector fields.
The second property is the continuity of $\XX(\cdot,x)$ up to $T_{\sox}(x)$, discussed in
Theorem~\ref{thm:pasquetta}, and sufficient conditions for $T_{\sox}(x)=T$.  

%

Finally, we discuss the stability properties
of $\XX$ before the blow-up time $T_{\sXX}$
with respect to perturbations of $\bb$. The results of this paper will be applied to describe the lagrangian structure of weak solutions of the Vlasov-Poisson equation and to prove existence of weak solutions with $L^1$ summability of the initial datum.

%
%

\smallskip
\noindent {\bf Acknowledgement.} The first and third author acknowledge the support
of the ERC ADG GeMeThNES, the second author has been partially supported by PRIN10 
grant from MIUR for the project Calculus of Variations, the third author has been partially supported 
by NSF Grant DMS-1262411.
This material is also based upon work supported by the National Science
Foundation under Grant No. 0932078 000, while the second and third authors were in residence 
at the Mathematical Sciences Research Institute in Berkeley, California, during the fall semester of 2013.

\section{Notation and preliminary results}

We mostly use standard notation, denoting by $\Leb{d}$ the Lebesgue measure in $\R^d$, and by $f_\#\mu$ the push-forward
of a Borel nonnegative measure $\mu$ under the action of a Borel map $f$, namely $f_\#\mu(B)=\mu(f^{-1}(B))$ for any
Borel set $B$ in the target space. We denote by $\BorelSets{\R^d}$ the family of all Borel sets in $\R^d$. 
In the family of positive finite measures in an open set $\Omega$, we will consider both the weak topology induced by
the duality with $C_b(\Omega)$ that we will call {\it narrow} topology, and the {\it weak} topology induced by $C_c(\Omega)$.

If $J\subset\R$ is an interval and $t\in J$, we denote by $e_t:C(J;\R^d)\to\R^d$ the evaluation map at time $t$, namely
$e_t(\eta):=\eta(t)$ for any continuous curve $\eta:J \to \R^d$.
The rest of the section is devoted to the discussion of preliminary results on solutions to the continuity equation, with statements
and proofs adapted to our problem.
Also, $\Measuresp{\R^d}$ will denote the space of finite Borel measures on $\R^d$, while 
$\Probabilities{\R^d}$ denotes the space of probability measures.

Let us fix $T\in (0,\infty)$ and 
consider a weakly continuous family $\mu_t\in\Measuresp{\R^d}$, $t\in [0,T]$, solving in the sense of distributions the continuity equation
$$
\frac{d}{dt}\mu_t+\nabla\cdot (\bb_t\mu_t)=0\qquad\text{in $(0,T)\times\R^d$}
$$
for a Borel vector field $\bb:(0,T)\times\R^d\to\R^d$, locally integrable with respect to the
space-time measure $\mu_tdt$. When we restrict ourselves to probability measures $\mu_t$, then weak and narrow continuity
w.r.t. $t$ are equivalent; analogously, we may equivalently consider compactly supported test functions $\varphi(t,x)$ in the
weak formulation of the continuity equation, or functions with bounded $C^1$ norm whose support is contained in $I\times\R^d$ with $I\Subset (0,T)$.

We now recall the so-called superposition principle. We prove it under the general assumption that $\mu_t$ 
may a priori vanish for some $t\in [0,T]$, but satisfies \eqref{hp:super}; a posteriori we see that $\mu_t\in\Probabilities{\R^d}$ for every $t\in [0,T]$.
 The superposition principle will play a role in the proof of the comparison principle stated in 
 Proposition~\ref{prop:uni_continuity} and in the blow-up criterion of Theorem~\ref{thm:noblowup}.
 In connection with earlier analogous results, we will exploit the argument in the proof of \cite[Theorem~12]{bologna}, 
 stated for probability measures but that still works under the  assumption that $\mu_t\in\Measuresp{\R^d}$ 
 and $\mu_t(\R^d)>0$ for every $t\in [0,T]$. See also \cite[Theorem~8.2.1]{amgisa}, where a proof is presented in the even more 
special case of $L^p$ integrability on $\bb$  for some $p>1$
$$
\int_0^T\int_{\R^d}{|\bb(t,x)|^p}\,d\mu_t(x)\,dt<\infty.
$$ 

\begin{theorem}[Superposition principle and approximation]\label{thm:superpo}
Let $\bb:(0,T)\times\R^d\to \R^d$ be a Borel vector field. Let $\mu_t\in\Measuresp{\R^d}$, $0\leq t\leq T$, with $\mu_t$ weakly continuous in $[0,T]$ solution to the equation 
$\frac{d}{dt}\mu_t+{\rm div\,}(\bb\mu_t)=0$ in $(0,T) \times \R^d$, with
\begin{equation}
\label{hp:super}
\int_0^T\int_{\R^d}\frac{|\bb(t,x)|}{1+|x|}\,d\mu_t(x)\,dt<\infty.
\end{equation}
Then there exists $\eeta\in\Measuresp{C([0,T];\R^d)}$ satisfying:
\begin{itemize}
\item[(i)] $\eeta$ is concentrated on absolutely continuous curves
$\eta$ in $[0,T]$, solving the ODE $\dot\eta=\bb(t,\eta)$ $\Leb{1}$-a.e. in $(0,T)$;
\item[(ii)] $\mu_t=(e_t)_\#\eeta$ (so, in particular, $\mu_t(\R^d)= \mu_0(\R^d)$) for all $t \in [0,T]$.\end{itemize}
Moreover, there exists a family of measures $\mu_t^R\in\Measuresp{\R^d}$, narrowly continuous in $[0,T]$, solving the continuity equation and supported on
$\overline{B}_R$, such that $\mu^R_t\uparrow\mu_t$ as $R \to \infty$ for all $t\in [0,T]$.
\end{theorem}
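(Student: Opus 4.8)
\medskip
\noindent\textbf{Plan of proof.}
My plan is the classical superposition strategy: mollify $\bb$ in space, solve the ODE for the mollified field by Cauchy--Lipschitz, push the (mollified) initial measure forward to the path space, pass to the limit, and then read off the approximating family $\mu^R_t$ by restricting the limit path measure to bounded curves. As a preliminary step I would record mass conservation: testing $\tfrac{d}{dt}\mu_t+\div(\bb_t\mu_t)=0$ against $\chi_R(x)\zeta(t)$ with $\chi_R\in C^1_c(\R^d)$, $0\le\chi_R\le1$, $\chi_R\equiv1$ on $B_R$, $\supp\chi_R\subset B_{2R}$, $|\nabla\chi_R|\le C/R$, and using $\tfrac1R\le\tfrac{2}{1+|x|}$ on $\supp\nabla\chi_R$, gives
\[
\Bigl|\int\chi_R\,d\mu_t-\int\chi_R\,d\mu_0\Bigr|\le 2C\int_0^T\!\!\int_{\{|x|\ge R\}}\frac{|\bb_s|}{1+|x|}\,d\mu_s\,ds\longrightarrow 0\quad(R\to\infty)
\]
by \eqref{hp:super} and dominated convergence; hence $\mu_t(\R^d)=\mu_0(\R^d)$ for all $t$. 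If $\mu_0=0$ then $\mu_t\equiv0$ and $\eeta=0$ works; otherwise I normalize to $\mu_t(\R^d)\equiv1$, which removes the possible degeneracy ``$\mu_t=0$ for some intermediate $t$''.

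Next, fix a strictly positive mollifier $\rho_\e$ and set $\mu^\e_t:=\mu_t*\rho_\e$, $E^\e_t:=(\bb_t\mu_t)*\rho_\e$, $\bb^\e_t:=E^\e_t/\mu^\e_t$. Then $\mu^\e_t$ has a smooth positive density, is narrowly continuous, solves $\tfrac{d}{dt}\mu^\e_t+\div(\bb^\e_t\mu^\e_t)=0$, and, from $|E^\e_t|\le(|\bb_t|\mu_t)*\rho_\e$ together with $\tfrac1{1+|x|}\le\tfrac{1+\e}{1+|y|}$ for $|x-y|\le\e$, the growth bound \eqref{hp:super} is inherited (uniformly in $\e\le1$), both with $|\cdot|$ and with a fixed convex superlinear $\theta:[0,\infty)\to[0,\infty)$ in front --- $\theta$ being chosen, via the de la Vall\'ee-Poussin criterion, so that $\theta(\tfrac{2|\bb_t|}{1+|x|})\in L^1(\mu_t\,dt)$ --- the latter by Jensen's inequality for the jointly convex perspective function $(r,p)\mapsto r\,\theta(|p|/r)$. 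Since $\bb^\e$ is smooth in $x$ with this growth, Cauchy--Lipschitz (Carath\'eodory form) produces, for $\mu^\e_0$-a.e.\ $x$, a flow $\XX^\e(\cdot,x)$ solving $\dot\eta=\bb^\e_t(\eta)$, $\eta(0)=x$, defined on the whole of $[0,T]$ (the Lyapunov function $\log(1+|x|)$ rules out finite-time blow-up, cf.\ \cite{amgisa}); setting $\eeta^\e:=(\XX^\e)_\#\mu^\e_0\in\Probabilities{C([0,T];\R^d)}$, uniqueness for the continuity equation with the smooth field $\bb^\e$ gives $(e_t)_\#\eeta^\e=\mu^\e_t$.

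Then I would prove tightness of $\{\eeta^\e\}$. Integrating $\tfrac{d}{dt}\log(1+|\eta|)\le\tfrac{|\dot\eta|}{1+|\eta|}$ along $\XX^\e$ controls $\int\big(\sup_t\log(1+|\eta(t)|)-\log(1+|\eta(0)|)\big)\,d\eeta^\e$ uniformly in $\e$; combined with tightness of $\{\mu^\e_0\}_{\e\le1}$ this yields a uniform bound $\eeta^\e(\{\sup_{[0,T]}|\eta|>R\})\to0$ as $R\to\infty$. From the $\theta$-bound, $\int\int_0^T\theta(\tfrac{|\dot\eta(t)|}{1+|\eta(t)|})\,dt\,d\eeta^\e=\int_0^T\!\int\theta(\tfrac{|\bb^\e_t|}{1+|x|})\,d\mu^\e_t\,dt\le C$, and on $\{\sup_t|\eta|\le R\}$ this forces an equi-integrable bound on $\{\dot\eta\}$ by superlinearity of $\theta$; hence the closed sublevel sets $\{\sup_t|\eta|\le R\}\cap\{\int_0^T\theta(\tfrac{|\dot\eta|}{1+|\eta|})\,dt\le M\}$ are compact in $C([0,T];\R^d)$ (Dunford--Pettis plus Ascoli--Arzel\`a), so $\{\eeta^\e\}$ is tight. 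Let $\eeta$ be a narrow cluster point: lower semicontinuity of $\eta\mapsto\int_0^T\theta(\tfrac{|\dot\eta|}{1+|\eta|})\,dt$ shows $\eeta$ is concentrated on absolutely continuous curves, and $(e_t)_\#\eeta=\lim_\e(e_t)_\#\eeta^\e=\lim_\e\mu^\e_t=\mu_t$, which is (ii).

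I expect the main obstacle to be showing that $\eeta$ is concentrated on solutions of $\dot\eta=\bb_t(\eta)$ for the \emph{original} $\bb$. Fix $\e'>0$: the functional $c_{\e'}(\eta):=\int_0^T\tfrac{|\dot\eta(t)-\bb^{\e'}_t(\eta(t))|}{1+|\eta(t)|}\,dt$ is l.s.c.\ on the sublevel sets above (its integrand is convex in $\dot\eta$ and $\bb^{\e'}$ is continuous), and since $\XX^\e(\cdot,x)$ solves $\dot\eta=\bb^\e_t(\eta)$ exactly,
\[
\int c_{\e'}\,d\eeta^\e\le\int\!\int_0^T\frac{|\bb^\e_t(\eta(t))-\bb^{\e'}_t(\eta(t))|}{1+|\eta(t)|}\,dt\,d\eeta^\e=\int_0^T\!\!\int_{\R^d}\frac{|\bb^\e_t-\bb^{\e'}_t|}{1+|x|}\,d\mu^\e_t\,dt.
\]
Letting $\e\to0$ (l.s.c.\ of $c_{\e'}$ against $\eeta$ on the left; strong $L^1(\mu_t\,dt/(1+|x|))$ convergence $\bb^\e\to\bb$ on the right), then $\e'\to0$, yields $\int\big(\int_0^T\tfrac{|\dot\eta(t)-\bb_t(\eta(t))|}{1+|\eta(t)|}\,dt\big)\,d\eeta=0$, i.e.\ $\eeta$-a.e.\ $\eta$ solves the ODE for $\Leb1$-a.e.\ $t$; this is (i). The strong convergence $\bb^\e\to\bb$ is the technical heart, proved through the joint convexity and lower semicontinuity of the action functional $(\rho,E)\mapsto\int\theta(|E|/\rho)\,d\rho$ as in \cite[\S8.1]{amgisa}; it is precisely here that the merely $L^1$ (rather than $L^p$, $p>1$) nature of \eqref{hp:super} makes the passage delicate. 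Finally, for the approximating family I set $\eeta^R:=\eeta\res\{\eta:\sup_{[0,T]}|\eta|\le R\}$ and $\mu^R_t:=(e_t)_\#\eeta^R$: then $\mu^R_t$ is supported in $\overline B_R$, is narrowly continuous in $t$ (dominated convergence for the fixed finite measure $\eeta^R$ on path space), solves the continuity equation (integrate $\tfrac{d}{dt}\varphi(t,\eta(t))$ against $\eeta^R$, which is concentrated on ODE solutions, for $\varphi$ compactly supported in time), and $\mu^R_t\uparrow\mu_t$ because $\{\sup_{[0,T]}|\eta|\le R\}\uparrow C([0,T];\R^d)$ forces $\eeta^R\uparrow\eeta$.
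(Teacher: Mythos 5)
Your proposal is correct in outline, but it takes a genuinely different route from the paper. The paper does not reprove the superposition principle: it reduces Theorem~\ref{thm:superpo} to \cite[Theorem~12]{bologna}, which already covers vector fields satisfying only the $L^1$ growth bound \eqref{hp:super} but requires $\mu_t(\R^d)>0$ for \emph{every} $t$; the entire content of the paper's proof is to rule out the degenerate case of vanishing intermediate mass, which it does by contradiction (apply the cited theorem on $[t_0,t_1-n^{-1}]$, restrict to curves starting in $B_r$, and use the $\log(1+|x|)$ moment bound to show the measures $(e_{t_1-n^{-1}})_\#\eeta^{n,r}$ are tight with mass bounded below, so $\mu_{t_1}\neq 0$). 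You instead rebuild the cited theorem from scratch --- mollification, Cauchy--Lipschitz for $\bb^\e=E^\e/\mu^\e$, de la Vall\'ee-Poussin to replace $L^p$, $p>1$, integrability by a superlinear $\theta$, tightness via the coercive action functional, and identification of the limit --- and you dispose of the degeneracy issue up front by the cutoff computation showing $\mu_t(\R^d)\equiv\mu_0(\R^d)$. That cutoff argument is sound (it uses only the distributional equation and \eqref{hp:super}) and is arguably cleaner than the paper's contradiction argument, since it yields constancy of the mass a priori rather than positivity a posteriori; what the paper's route buys is brevity and a clean separation from the (substantial) literature it relies on. Your treatment of the approximating family $\mu^R_t$ by restricting $\eeta$ to $\{\sup_{[0,T]}|\eta|\le R\}$ coincides with the paper's.

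Two caveats, neither fatal. First, you cannot simultaneously take $\rho_\e$ strictly positive (needed so that $\bb^\e$ is everywhere defined) and supported in $B_\e$ (as used in your comparison $\tfrac{1}{1+|x|}\le\tfrac{1+\e}{1+|y|}$); with a Gaussian kernel one instead uses $\tfrac{1+|y|}{1+|x|}\le 1+|x-y|$ and the finite first moment of $\rho_\e$. Second, the step you rightly flag as the technical heart --- passing to the limit in $\int_0^T\!\int\tfrac{|\bb^\e_t-\bb^{\e'}_t|}{1+|x|}\,d\mu^\e_t\,dt$ --- is delicate precisely because $\bb^\e$ and $\bb^{\e'}$ are defined relative to different reference measures; the standard resolution (as in \cite[Theorem~8.2.1]{amgisa} and \cite[Theorem~12]{bologna}) compares with a \emph{continuous} field $\cc$ via the commutator estimate $\|(\cc\mu_t)*\rho_\e-\cc(\mu_t*\rho_\e)\|\to 0$ and then approximates $\bb$ by such $\cc$ in $L^1\bigl(\tfrac{\mu_t\,dt}{1+|x|}\bigr)$. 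Your choice $\cc=\bb^{\e'}$ works only after this detour, so as written that step is a sketch pointing at the right tools rather than a complete argument.
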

\begin{Proof}  
If $\mu_t = 0$ for every $t\in [0,T]$, the statement is trivial. Otherwise, there exists $t_0 \in [0,T]$ such that $\mu_{t_0}(\R^d)>0$. Since \cite[Theorem~12]{bologna} proves the result when $\mu_t(\R^d) >0$ for every $t \in [0,T]$, we are left to verify this property. More precisely, we prove that $\mu_t(\R^d) >0$ for every $t \in [t_0,T]$; the same argument applies (arguing with the backward equation starting from $t_0$) on $[0,t_0]$.

Since the function $t \to \mu_t(\R^d)$ is lower semicontinuous, let us consider, by contradiction, the smallest $t_1\in (t_0,T]$ such that $\mu_{t_1}(\R^d)=0$. As it is easy to check,
we can apply the proof of \cite[Theorem~12]{bologna} on any time interval $[t_0, t_1-n^{-1}]$ to deduce that
$\mu_t=(e_t)_\#\eeta^n$ for all $t \in [t_0,t_1-n^{-1}]$, where $\eeta^n\in\Probabilities{C([t_0,t_1-n^{-1}];\R^d)}$ satisfies (i).
To find a contradiction, we want to show that the weak limit $\mu_{t_1}$ of $\mu_{t_1-n^{-1}}$ as $n\to \infty$ is nonzero.

Let $r>0$ be such that $\mu_{t_0}(B_r)>0$ and let us consider for $n>t_1^{-1}$
$$\eeta^{n,r} = \eeta^n \res \{\gamma \in C([t_0,t_1-n^{-1}];\R^d):\ \gamma(0) \in B_r\}.$$
We have that $\eeta^{n,r}(C([t_0,t_1-n^{-1}];\R^d)) = \mu_{t_0}(B_r)>0$ and since $\eeta^{n,r} \leq \eeta^n$, we deduce that 
\begin{equation}
\label{eqn:sotto-nonzero}
(e_t)_\#\eeta^{n,r} \leq \mu_t \qquad \forall \,t \in [t_0,t_1-n^{-1}].
\end{equation}
Since $|\gamma(0)| \leq r$ for $\eeta^{n,r}$-a.e. $\gamma$, 
$\eeta^{n,r}$ is concentrated on integral curves of $\bb$ and \eqref{eqn:sotto-nonzero} holds, we obtain that
\begin{equation*}
\begin{split}
\int_{\R^d}\log\Big(\frac{1+|x|}{1+r}\Big)\,d[(e_{t_1-n^{-1}})_\#\eeta^{n,r}] (x) 
&\leq
\int_{\R^d} \log\Big(\frac{1+|\gamma(t_1-n^{-1})|}{1+|\gamma(0)|} \Big) \, d\eeta^{n,r}(\gamma)
\\
&\leq 
\int_{\R^d} \int_{t_0}^{t_1-n^{-1}}\frac{d}{dt} \log({1+|\gamma(t)|}) \, dt \, d\eeta^{n,r}(\gamma)
\\
&\leq 
\int_{\R^d} \int_{t_0}^{t_1-n^{-1}}\frac{|\dot\gamma(t)|}{1+|\gamma(t)|} \, dt \, d\eeta^{n,r}(\gamma)
\\
&=
\int_{t_0}^{t_1-n^{-1}}\int_{\R^d}\frac{|\bb(t,\gamma(t))|}{1+|\gamma(t)|}\,dt \, d\eeta^{n,r}(\gamma)
\\
&=
\int_{t_0}^{t_1-n^{-1}}\int_{\R^d}\frac{|\bb(t,x)|}{1+|x|} \,d[(e_t)_\#\eeta^{n,r}] (x) \,dt
\\
&\leq
\int_{0}^T\int_{\R^d}\frac{|\bb(t,x)|}{1+|x|}\,d\mu_t(x)\,dt.
\end{split}
\end{equation*}
Hence
\begin{equation}
\label{eqn:coercive-nonzero}\sup_{n\in \N} \int_{\R^d}\log({1+|x|})\,d[(e_{t_1-n^{-1}})_\#\eeta^{n,r}](x)<\infty.
\end{equation}
Since the measures $(e_{t_1-n^{-1}})_\#\eeta^{n,r}$ have mass $\mu_{t_0}(B_r)$, we deduce from the tightness estimate
\eqref{eqn:coercive-nonzero} that, up to a subsequence, they converge to a narrow limit $\nu$ such that $\nu(\R^d) = \mu_{t_0}(B_r)>0$.
Hence, by \eqref{eqn:sotto-nonzero} applied with $t={t_1-n^{-1}}$ 
the weak limit $\mu_{t_1}$ of $\mu_{t_1-n^{-1}}$ as $n\to \infty$ is nonzero, a contradiction.

This proves that $\mu_t(\R^d)\geq c>0$ for all $t \in [{t_0},T]$.

The last statement
can simply be obtained by restricting $\eeta$ to the class of curves contained in $\overline{B}_R$
for all $t \in [0,T]$ to obtain positive finite
measures $\eeta^R\leq\eeta$ which satisfy $\eeta^R\uparrow\eeta$, and then defining $\mu^R_t:=(e_t)_\#\eeta^R$.
\end{Proof}

\section{Integrability and uniqueness of bounded solutions of the continuity equation}\label{sec:ass-a-c}

Given a closed interval $I\subset\R$ and an open set $\O\subset\R^d$, 
let us define the class $\mathcal L_{I,\O}$ of all nonnegative functions which are essentially 
bounded, nonnegative, and compactly supported in $\O$:
\begin{equation}\label{eqn:mathcalL-o}
\mathcal L_{I,\O}:=L^\infty\bigl(I;L^\infty_+(\O)\bigr)\cap\bigl\{w:\
\text{$\supp w$ is a compact subset of $I\times \O$}\bigr\}.  
\end{equation}
We say that $\rho\in\mathcal L_{I,\O}$ is weakly$^*$ continuous if there is a representative
$\rho_t$ with $t\mapsto\rho_t$ continuous in $I$ w.r.t. the weak$^*$ topology of $L^\infty(\O)$.
Notice that, in the class $\mathcal L_{I,\O}$, weak$^*$ continuity of $\rho$ is equivalent to the 
narrow continuity of the corresponding measures $\mu_t:=\rho_t\Leb{d}\in\Measuresp{\R^d}$.

For $T\in (0,\infty)$ we are given a Borel vector field $\bb:(0,T)\times\O\to\R^d$ satisfying:
\begin{itemize}
\item[(a-$\O$)] $\int_0^T\int_{\O'}|\bb(t,x)|\,dxdt<\infty$ for any $\O'\Subset\O$;
\item[(\bbb-$\O$)] for any nonnegative $\bar\rho\in L^\infty_+(\O)$ with compact support in $\Omega$ and 
any closed interval $I=[a,b]\subset [0,T]$, the continuity equation
\begin{equation}
\label{eqn:ce}
\frac{d}{dt}\rho_t+{\rm div\,}(\bb\rho_t)=0\qquad\text{in $(a,b)\times\O$}
\end{equation}
has at most one weakly$^*$ continuous solution $I\ni t\mapsto\rho_t\in\mathcal L_{I,\O}$ with $\rho_a=\bar\rho$.
\end{itemize}
\begin{remark}\label{rmk:manycases} {\rm Assumption (\bbb-$\O$) is known to be true in many cases. The following list does not pretend
to be exhaustive:

-- Sobolev vector fields \cite{lions}, $BV$ vector fields whose divergence is a locally integrable function in space
\cite{bouchut,ambrosio,collerner1,collerner}, some classes of vector fields of bounded deformation \cite{amcriman}; 

-- vector fields $\BB(x,y)=(\bb_1(x,y),\bb_2(x,y))$
with different regularity w.r.t. $x$ and $y$ \cite{Lebris,lerner};

-- two-dimensional Hamiltonian vector fields
\cite{abc1} (within this class, property (\bbb-$\O$) has been characterized in terms of the so-called weak Sard property);

-- vector fields arising from the convolution of $L^1$ functions with singular integrals \cite{poly,boucrising}.
{In this case, the authors proved uniqueness of the regular lagrangian flow associated to $\bb$; we outline in the next remark how to obtain the eulerian uniqueness property (\bbb-$\O$) following their argument.}
}\end{remark}

\begin{remark} {\rm 
Under the assumptions on the vector field $\bb$ considered in \cite{boucrising}, the authors proved 
in \cite[Theorem 6.2]{boucrising} the uniqueness of the lagrangian flow. 
In their key estimate, the authors take two regular lagrangian flows $X$ and $Y$, provide an upper and lower bound for the quantity
\begin{equation}
\label{eqn:cdl-func}
\Phi_\delta (t) := \int \log \Big( 1+ \frac{|X(t,x)- Y(t,x)|}{\delta} \Big) \, dx \qquad t\in [0,T]
\end{equation}
in terms of a parameter $\delta>0$, and eventually let $\delta \to 0$.
To show that property (\bbb-$\O$) holds, we consider two nonnegative bounded solutions of the continuity equation with the same initial datum which are compactly supported in $[a,b] \times \O$. By Theorem \ref{thm:superpo} there exist
$\eeta^1,\eeta^2\in\Probabilities{C([a,b];\R^d)}$ which are concentrated on absolutely continuous solutions
$\eta \in AC([a,b]; \O)$ of the ODE $\dot\eta=\bb(t,\eta)$ $\Leb{1}$-a.e. in $(a,b)$, and satisfy $(e_t)_\#\eeta^i \leq C \Leb{d}$ for any $t\in [a,b]$, $i=1,2$. Moreover, we have that $(e_a)_\# \eeta^1 = (e_a)_\# \eeta^2$.
Given $\delta>0$, we consider the quantity
\begin{equation}
\label{eqn:cdl-func-new}
\Psi_\delta(t) := \int_{\O} \int \int \log \Big( 1+ \frac{| \gamma(t) - \eta(t)|} {\delta} \Big) d \eeta^{1}_{x}( \gamma) d\eeta^{2}_{x}(\eta) \, d [(e_0)_\# \eeta^1](x)  
 \qquad t\in [a,b],
\end{equation}
where $\eeta^1_{x}$, $\eeta^2_x$ are the disintegrations of $\eeta^1$ and $\eeta^2$ with respect to the map $e_a$.
Since $\eeta^1$ and $\eeta^2$ are concentrated on curves in $C([a,b];\O)$, to show that $\eeta^1= \eeta^2$ we can neglect the behavior of $\bb$ outside $\O$. Following the same computations of  \cite{boucrising} with the functional \eqref{eqn:cdl-func-new} instead of \eqref{eqn:cdl-func}, we show that $\eeta^{1}_{x} = \eeta^{2}_{x}$ for $(e_a)_\# \eeta^{1}$-a.e. $x\in \O$ and this implies the validity of property (\bbb-$\O$).
}\end{remark}

More recently, these well-posedness results have also been extended to 
infinite-dimensional vector fields (see \cite{amfigaflow} and the bibliography therein). It is interesting to observe 
that the uniqueness assumption in (\bbb-$\O$) actually implies the validity
of a comparison principle.

\begin{proposition}[Comparison principle]\label{prop:uni_continuity}
If (a-$\O$) and (\bbb-$\O$) are satisfied, then the following implication holds:
$$
\rho^1_0\leq\rho^2_0\qquad\Longrightarrow\qquad\rho^1_t\leq\rho^2_t\quad\forall \,t\in [0,T]
$$
for all weakly$^*$ continuous solutions of \eqref{eqn:ce} in the class $\mathcal L_{[0,T],\O}$.
\end{proposition}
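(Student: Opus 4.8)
The plan is to reduce the comparison principle to the uniqueness statement (\bbb-$\O$) by a standard truncation-and-subtraction argument. First I would set $\sigma_t := \rho^2_t - \rho^1_t \geq 0$ at $t=0$ (by hypothesis $\rho^1_0 \leq \rho^2_0$), and observe that $\sigma_t$ solves the continuity equation \eqref{eqn:ce} with the same vector field $\bb$, is weakly$^*$ continuous, and lies in $\mathcal L_{[0,T],\O}$ since it is a difference of two elements of that class. The goal is then to show $\sigma_t \geq 0$ for all $t$. The difficulty is that $\sigma_t$ may change sign for $t>0$, so we cannot directly invoke uniqueness: we must manufacture, from $\sigma$, a genuine \emph{nonnegative} solution that agrees with $\sigma$ at time $0$ and then use (\bbb-$\O$) to conclude they coincide.

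The way to produce that nonnegative competitor is through the superposition principle (Theorem~\ref{thm:superpo}) applied to each $\mu^i_t := \rho^i_t \Leb{d}$ (after extending $\bb$ arbitrarily, say by $0$, outside $\O$; since the $\rho^i$ are compactly supported in $I\times\O$ and solve the equation there, hypothesis \eqref{hp:super} holds — indeed $|\bb|$ is integrable against $\mu^i_t\,dt$ because $\supp\rho^i \Subset I\times\O$ and (a-$\O$) gives local $L^1$ integrability of $\bb$). This yields measures $\eeta^i\in\Measuresp{C([0,T];\R^d)}$ concentrated on integral curves of $\bb$ with $\mu^i_t = (e_t)_\#\eeta^i$. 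Disintegrating $\eeta^1$ and $\eeta^2$ with respect to $e_0$ against their common dominating part coming from $\mu^1_0 \le \mu^2_0$, I would build a coupling: use the inequality $\mu^1_0 \leq \mu^2_0$ to write $\eeta^1 \res \{e_0 \in B\}$ as the push-forward part of $\eeta^2$ over the same base, obtaining a measure $\eeta := \eeta^2 - (\text{the part matched to } \eeta^1)$ which is nonnegative, concentrated on integral curves of $\bb$, with $(e_0)_\#\eeta = \mu^2_0 - \mu^1_0 = \sigma_0 \Leb{d}$. Then $\bar\sigma_t := (e_t)_\#\eeta$ is a nonnegative finite measure solving the continuity equation with $\bar\sigma_0 = \sigma_0\Leb{d}$.

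It remains to check that $\bar\sigma_t$ is absolutely continuous with bounded, compactly supported density — i.e. that $\bar\sigma_t = \bar\rho_t\Leb{d}$ with $\bar\rho \in \mathcal L_{[0,T],\O}$ — so that (\bbb-$\O$) applies. Compact support in $\O$ follows because $\eeta \le \eeta^2$ and $\eeta^2$ is supported on curves staying in the compact projection of $\supp\rho^2$; the $L^\infty$ bound and absolute continuity follow from $\bar\sigma_t \le \mu^2_t = \rho^2_t\Leb{d}$ with $\|\rho^2_t\|_\infty$ bounded. Similarly $\sigma_t = \mu^2_t - \mu^1_t$ has density in $\mathcal L_{[0,T],\O}$. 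Since both $\sigma_t$ and $\bar\sigma_t$ solve \eqref{eqn:ce} on $[0,T]$, lie in $\mathcal L_{[0,T],\O}$, and share the initial datum $\sigma_0$, assumption (\bbb-$\O$) forces $\sigma_t = \bar\sigma_t$ for all $t$; but $\bar\sigma_t \geq 0$, hence $\rho^2_t - \rho^1_t = \sigma_t \ge 0$ for all $t \in [0,T]$, which is the claim.

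The main obstacle is the construction and bookkeeping of the coupling $\eeta$ out of $\eeta^1,\eeta^2$ — specifically, producing a single $\eeta \le \eeta^2$ whose time-zero marginal is exactly $\mu^2_0 - \mu^1_0$ while remaining concentrated on integral curves. One clean way is to disintegrate $\eeta^1 = \int \eeta^1_x \, d\mu^1_0(x)$ and $\eeta^2 = \int \eeta^2_x \, d\mu^2_0(x)$ with respect to $e_0$, note $\frac{d\mu^1_0}{d\mu^2_0} \le 1$ since $\mu^1_0 \le \mu^2_0$, and set $\eeta := \int \eeta^2_x \, d(\mu^2_0 - \mu^1_0)(x)$; then $0 \le \eeta \le \eeta^2$, $\eeta$ is concentrated on integral curves (as $\eeta^2$ is), and $(e_0)_\#\eeta = \mu^2_0 - \mu^1_0$ by construction. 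This is essentially the same mechanism used for $\eeta^{n,r}$ in the proof of Theorem~\ref{thm:superpo}, so the estimates needed are routine once the disintegration is in place.
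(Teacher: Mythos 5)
There is a genuine gap at the final step, and it is precisely the circularity you flagged yourself earlier in the argument and then did not avoid. The class $\mathcal L_{[0,T],\O}$ consists by definition of \emph{nonnegative} bounded compactly supported functions, and assumption (\bbb-$\O$) asserts uniqueness only within that class. Your $\sigma_t=\rho^2_t-\rho^1_t$ is weakly$^*$ continuous, bounded and compactly supported, but it is \emph{not} known to be nonnegative for $t>0$ — that is exactly the conclusion of the proposition. So the assertion ``$\sigma_t$ has density in $\mathcal L_{[0,T],\O}$'' is unjustified, and invoking (\bbb-$\O$) for the pair $(\sigma_t,\bar\sigma_t)$ assumes what is to be proved. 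Nor can one argue that uniqueness for nonnegative data extends by linearity to signed solutions: uniqueness in the cone of nonnegative solutions is in general strictly weaker than uniqueness for signed ones, and the paper's whole framework (one-sided divergence bounds, etc.) is built to avoid ever leaving that cone.

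The fix keeps your coupling but compares the \emph{other} piece of it. With $\eeta^2=\int\eeta^2_x\,d\mu^2_0(x)$ and $\mu^1_0\leq\mu^2_0$, set $\tilde\eeta:=\int\eeta^2_x\,d\mu^1_0(x)$ (the complement of your $\eeta$, since $\eeta+\tilde\eeta=\eeta^2$) and $\tilde\mu_t:=(e_t)_\#\tilde\eeta$. Then $\tilde\mu_t$ and $\mu^1_t$ are \emph{both nonnegative}, both lie in $\mathcal L_{[0,T],\O}$ (using $\tilde\eeta\leq\eeta^2$ for the bound and support of $\tilde\mu_t$), both solve the continuity equation, and both start from $\mu^1_0$; now (\bbb-$\O$) legitimately gives $\mu^1_t=\tilde\mu_t\leq(e_t)_\#\eeta^2=\mu^2_t$. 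This is the paper's argument; everything else in your write-up (the verification of \eqref{hp:super}, the disintegration, the support and $L^\infty$ bookkeeping) is sound.
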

\begin{Proof} Let
$\eeta^i$ be representing $\mu^i_t:=\rho^i_t\Leb{d}$ according to Theorem \ref{thm:superpo},
and let $\eeta_x^i$ be the conditional probability measures induced by $e_0$, that is
$$
\int F(\eta)\,d\eeta^i = \int_{\R^d} \biggl(\int F(\eta)\,d\eeta_x^i\biggr)\,d\mu_0^i(x)\qquad \forall\,F:C([0,T];\R^d)\to \R \text{ bounded},
$$
or (in a compact form)
$\eeta^i(d\eta) = \int \eeta_x^i(d\eta)\,d \mu_0^i(x).$
Defining
$$
\tilde\eeta(d\eta):=\int \eeta^2_x \, d\mu^1_0(x), \qquad \tilde\mu_t:=(e_t)_\#\tilde\eeta,$$
because
$\mu^1_0\leq\mu^2_0$, we get $\tilde \eeta \leq \eeta^1$. Moreover, the densities of measures $\tilde \mu_t$ and $\mu_t^1$ provide two elements in $\mathcal L_{[0,T],\O}$, solving the continuity equation 
with the same initial condition $\mu^1_0$.
Therefore assumption (\bbb-$\O$) gives $\tilde\mu_t=\mu^1_t$ for all $t\in [0,T]$, and $\mu^1_t = \tilde \mu_t =(e_t)_\#\tilde\eeta \leq (e_t)_\#\eeta^2=\mu^2_t$ for all $t\in [0,T]$, as desired.
\end{Proof}

\begin{theorem}\label{thm:nosplitting}
Assume that $\bb$ satisfies (a-$\O$) 
and (\bbb-$\O$), and let $\llambda\in\Probabilities{C([0,T];\R^d)}$ 
satisfy:
\begin{itemize}
\item[(i)] $\llambda$ is concentrated on
$$
\bigl\{\eta\in AC([0,T];\O):\ \dot\eta(t)=\bb(t,\eta(t))\,\,\text{for $\Leb{1}$-a.e. $t\in (0,T)$}\bigr\};
$$
\item[(ii)] there exists $C_0\in (0,\infty)$ such that
\begin{equation}\label{eqn:noconcentration}
(e_t)_\#\llambda\leq C_0\Leb{d}\qquad\forall \,t\in [0,T].
\end{equation}
\end{itemize}
Then the conditional probability measures $\llambda_x$ induced by the map $e_0$ are Dirac masses for
$(e_0)_\#\llambda$-a.e. $x$; equivalenty, there exist curves $\eta_x\in AC([0,T];\O)$ solving the Cauchy problem
$\dot\eta=\bb(t,\eta)$ with the initial condition $\eta(0)=x$, satisfying
$$
\llambda= \int \delta_{\eta_x}\,d[(e_0)_\#\llambda](x).
$$
\end{theorem}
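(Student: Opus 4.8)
The plan is to reduce the statement to the uniqueness assumption (\bbb-$\O$) via a splitting/superposition argument, exactly in the spirit of the comparison principle proof above. First I would observe that it suffices to rule out any nontrivial splitting of mass: if for a set of $x$ of positive $(e_0)_\#\llambda$-measure the conditional measure $\llambda_x$ is not a Dirac mass, then one can find two disjoint Borel sets of curves, each carrying a positive fraction of $\llambda_x$ for such $x$, disagreeing at some rational time. Concretely, I would introduce the measure obtained by ``re-gluing'' the conditionals: for a Borel set $A\subset C([0,T];\R^d)$ set
$$
\llambda^A:=\llambda\res A,\qquad \mu^A_t:=(e_t)_\#\llambda^A,
$$
and note that $\mu^A_t\le(e_t)_\#\llambda\le C_0\Leb d$, so the densities $\rho^A_t$ of $\mu^A_t$ lie in $L^\infty$.

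The key step is then to show that these densities are weakly$^*$ continuous solutions of the continuity equation \eqref{eqn:ce} on $(0,T)\times\O$ — which is immediate from (i), since each $\llambda^A$ is still concentrated on integral curves of $\bb$ staying in $\O$, and the continuity equation is linear in $\eeta$ (this is the standard computation: test against $\var\in C^1_c$, differentiate $t\mapsto\int\var(t,\eta(t))\,d\llambda^A(\eta)$ and use $\dot\eta=\bb(t,\eta)$). A mild technical point is that the curves need not be compactly supported in $\O$ uniformly in $t$; to apply (\bbb-$\O$) I would first localize, intersecting $A$ with $\{\eta:\ \eta([0,T])\subset\O'\}$ for $\O'\Subset\O$ and working on the resulting truncated pieces, so that $\rho^A_t\in\mathcal L_{[0,T],\O}$, then let $\O'\uparrow\O$. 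Once in that class, (\bbb-$\O$) — or rather the comparison principle of Proposition~\ref{prop:uni_continuity} — forces the evolution $\rho^A_0\mapsto\rho^A_t$ to be uniquely determined and \emph{linear}: if $A$ and $A'$ are disjoint with $\rho^A_0=\rho^{A'}_0$ then $\rho^A_t=\rho^{A'}_t$ for all $t$.

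From here I would conclude by contradiction. Suppose the set $G$ of $x$ for which $\llambda_x$ is not Dirac has positive measure. Since two distinct continuous curves must differ at a rational time, and since $\R^d$ has a countable basis, a measurable selection/exhaustion argument produces a rational $\bar t\in(0,T]$, a Borel set $E\subset\R^d$ with $\Leb d(E)>0$, and two disjoint Borel sets of curves $A_1,A_2\subset C([0,T];\R^d)$ such that for $x\in E$ both $\llambda_x(A_i)>0$ and $(e_{\bar t})_\#(\llambda_x\res A_1)$ and $(e_{\bar t})_\#(\llambda_x\res A_2)$ are mutually singular, or more simply such that the disintegrations at time $\bar t$ are genuinely different. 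Normalizing the conditionals over $x\in E$ so that the two pieces have equal initial marginal (one can multiply $\llambda\res(A_i\cap e_0^{-1}(E))$ by an appropriate density in $x$ to match $\rho^{A_1}_0=\rho^{A_2}_0$, using that densities in $\mathcal L$ are preserved under this reweighting and the equation stays linear), the previous paragraph gives $\rho^{A_1}_{\bar t}=\rho^{A_2}_{\bar t}$; but integrating the disagreement of the conditionals over $E$ contradicts this. Hence $G$ is $\Leb d$-null, $\llambda_x=\delta_{\eta_x}$ for some curve $\eta_x$, and (i) forces $\eta_x$ to solve the Cauchy problem with $\eta_x(0)=x$, yielding $\llambda=\int\delta_{\eta_x}\,d[(e_0)_\#\llambda](x)$.

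The main obstacle, I expect, is the bookkeeping in the contradiction step: extracting from ``$\llambda_x$ not Dirac'' a \emph{single} splitting into two pieces with the \emph{same} rescaled initial marginal and a definite disagreement at a fixed rational time, in a Borel-measurable way in $x$. This is the analogue of the ``no splitting'' mechanism and requires a careful measurable-selection argument together with the observation that reweighting the pieces by a bounded density in the $x$-variable keeps them within $\mathcal L_{[0,T],\O}$ and keeps them solutions of the (linear) continuity equation, so that the comparison principle still applies. The continuity-equation verification and the localization to $\O'\Subset\O$ are routine given Theorem~\ref{thm:superpo} and Proposition~\ref{prop:uni_continuity}.
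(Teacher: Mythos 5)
Your proposal is correct and follows essentially the same route as the paper: localize to curves staying in a compact subset of $\O$, then run the ``no splitting'' decomposition argument, which turns a non-Dirac disintegration into two distinct bounded, compactly supported solutions of the continuity equation with the same initial datum, contradicting (\bbb-$\O$). The only difference is presentational --- the paper delegates the decomposition procedure to \cite[Theorem~18]{bologna}, whereas you reconstruct it, correctly flagging the one real technicality (restricting to the set where the conditional mass of each piece is at least $\delta>0$ so that the reweighting density stays bounded and the pieces remain in $\mathcal L_{[0,T],\O}$).
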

\begin{Proof} Let $\{A_n\}_{n\in\N}$ be an increasing family of open subsets of $\O$ whose union is $\Omega$, with
$A_n\Subset A_{n+1}\Subset\O$ for every $n$.
Possibly considering the restriction of $\eeta$ to the sets 
$$
\bigl\{\eta\in C([0,T];\R^d):\ \eta(t) \in \overline{A}_n \ \text{for every $t\in[0,T]$} \bigr\}
$$
it is not restrictive to assume that $\eeta$ is concentrated on a family $\Gamma$ of curves
satisfying $\bigcup_{\eta\in\Gamma}\eta([0,T])\Subset\Omega$. Then, using the uniqueness assumption
for uniformly bounded and compactly supported solutions to the continuity equation, the result follows from the decomposition procedure of \cite[Theorem~18]{bologna} 
(notice that the latter slightly improves the original argument of \cite[Theorem~5.4]{ambrosio}, where comparison principle for the continuity equation was assumed,
see also Proposition~\ref{prop:uni_continuity} and its proof).
\end{Proof}

\begin{remark}{\rm
The assumption (\bbb-$\O$) is purely local, as it is proved in the Appendix. Moreover, it could be reformulated in terms of a local uniqueness property of regular lagrangian flows: for any $t_0\geq 0, x_0 \in \O$ there exists $\eps := \eps(t_0,x_0)>0$ such that for any Borel set $B \subset B_\eps(x_0)\subset \O$ and any closed interval $I=[a,b]\subset [t_0-\eps,t_0+\eps] \cap [0,T]$, there exists at most one regular lagrangian flow in $B \times [a,b]$ with values in $B_\eps(x_0)$ (see Definition~\ref{def:regflow}).

Indeed, (b-$\O$) implies the local uniqueness of regular lagrangian flows by Theorem~\ref{thm:nosplitting} applied to $\llambda = \frac{1}{2}\int_B(\delta_{\sXX(\cdot,x)}+ \delta_{\sYY(\cdot,x)})\,d\Leb{d}(x)$, where $\XX$ and $\YY$ are regular lagrangian flows in $B \times [a,b]$; on the other hand, we obtain the converse implication through the superposition principle.
This approach has the advantage to state the assumptions and the results of this paper only in terms of the lagrangian point of view on the continuity equation. On the other hand, in concrete examples it is usually easier to verify assumption (\bbb-$\O$) than the corresponding lagrangian formulation.
}\end{remark}

\section{Regular flow, hitting time, maximal flow}

\begin{definition} [Local regular flow] \label{def:regflow}
Let $B\in\BorelSets{\R^d}$, $\tau>0$, and  $\bb: (0,\tau) \times \R^d \to \R^d$ Borel. We say that $\XX: [0,\tau]\times B\to \R^d$ is a {\em local 
regular flow} starting from $B$ (relative
to $\bb$) up to $\tau$ if the following two properties hold: 
\begin{itemize}
\item[(i)] for $\Leb{d}$-a.e. $x\in B$, $\XX(\cdot,x)\in AC([0,\tau];\R^d)$ and solves the ODE $\dot x(t)=\bb(t,x(t))$ $\Leb{1}$-a.e.
in $(0,\tau)$, with the initial condition $\XX(0,x)=x$;
\item[(ii)] there exists a constant $C=C(\XX)$ satisfying $\XX(t,\cdot)_\#(\Leb{d}\res B)\leq C\Leb{d}$.
\end{itemize}
\end{definition}

In the previous definition, as long as the image of $[0,\tau]\times B$ through $\XX$ is contained  in an open set $\O$, it is not necessary to specify the 
vector field $\bb$ outside $\O$. By Theorem~\ref{thm:nosplitting} we obtain a consistency result of the local regular flows with values in 
$\O$ in the intersection of their domains.

\begin{lemma}[Consistency of local regular flows]\label{lem:local_coincidence}
Assume that $\bb$ satisfies (a-$\O$) and (\bbb-$\O$).
Let $\XX_i$ be local regular flows starting from $B_i$ up to $\tau_i$, $i=1,\,2$, with $\XX_i([0,\tau_i]\times B_i)\subset\O$. 
Then
\begin{equation}\label{coincidence}
\XX_1(\cdot,x)\equiv\XX_2(\cdot,x)\quad\text{in $[0,\tau_1\wedge\tau_2]$, for $\Leb{d}$-a.e. $x\in B_1\cap B_2$.}
\end{equation}
\end{lemma}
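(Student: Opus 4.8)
The plan is to deduce \eqref{coincidence} from Theorem~\ref{thm:nosplitting}, reusing the device from the last remark of Section~\ref{sec:ass-a-c}: package the two flows into a single probability measure on curve space and use that its conditional measures relative to $e_0$ are forced to be Dirac masses.

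First I would set $\tau:=\tau_1\wedge\tau_2$ and restrict each $\XX_i$ to $[0,\tau]$, so that one may assume $\tau_1=\tau_2=\tau$; the restrictions to $(0,\tau)$ of (a-$\O$) and (\bbb-$\O$) still hold, so Theorem~\ref{thm:nosplitting} is applicable on $[0,\tau]$. Since \eqref{coincidence} is a $\Leb{d}$-a.e.\ statement on $B:=B_1\cap B_2$, and $B$ is the increasing union of the bounded sets $B\cap\{|x|<R\}$, $R\in\N$, it is enough to treat a.e.\ $x$ in an arbitrary Borel set $B'\subseteq B$ with $0<\Leb{d}(B')<\infty$. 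After discarding a $\Leb{d}$-negligible subset of $B'$, I may assume that for every $x\in B'$ both $\XX_1(\cdot,x)$ and $\XX_2(\cdot,x)$ belong to $AC([0,\tau];\O)$ (here using $\XX_i([0,\tau_i]\times B_i)\subseteq\O$), solve $\dot\eta=\bb(t,\eta)$ for $\Leb1$-a.e.\ $t$, and satisfy $\XX_i(0,x)=x$.

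Next I would introduce
\[
\llambda:=\frac{1}{2\,\Leb{d}(B')}\int_{B'}\bigl(\delta_{\sXX_1(\cdot,x)}+\delta_{\sXX_2(\cdot,x)}\bigr)\,d\Leb{d}(x)\ \in\ \Probabilities{C([0,\tau];\R^d)}
\]
(well defined since joint measurability of $\XX_i$ together with $t$-continuity makes $x\mapsto\XX_i(\cdot,x)$ Borel into $C([0,\tau];\R^d)$), and check the hypotheses of Theorem~\ref{thm:nosplitting}. Hypothesis (i) is immediate from the choice of $B'$. For (ii), $(e_t)_\#\llambda=\tfrac{1}{2\Leb{d}(B')}\bigl(\XX_1(t,\cdot)_\#(\Leb{d}\res B')+\XX_2(t,\cdot)_\#(\Leb{d}\res B')\bigr)$, and since $B'\subseteq B_i$, property (ii) of local regular flows gives $\XX_i(t,\cdot)_\#(\Leb{d}\res B')\le C(\XX_i)\Leb{d}$; hence $(e_t)_\#\llambda\le C_0\Leb{d}$ with $C_0:=(C(\XX_1)+C(\XX_2))/(2\Leb{d}(B'))$, uniformly in $t\in[0,\tau]$. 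Theorem~\ref{thm:nosplitting} then gives that $\llambda_x$ is a Dirac mass for $(e_0)_\#\llambda$-a.e.\ $x$. Since $\XX_i(0,x)=x$ one has $(e_0)_\#\llambda=\Leb{d}(B')^{-1}\,\Leb{d}\res B'$, and uniqueness of the disintegration forces $\llambda_x=\tfrac12(\delta_{\sXX_1(\cdot,x)}+\delta_{\sXX_2(\cdot,x)})$ for a.e.\ $x\in B'$; a measure of this form is a Dirac mass only when $\XX_1(\cdot,x)=\XX_2(\cdot,x)$. This proves \eqref{coincidence} on $B'$, and letting $B'$ exhaust $B$ through $B\cap\{|x|<R\}$, $R\to\infty$, yields it on $B$.

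The points deserving care (rather than the computation itself) are the measurability remark ensuring that $\llambda$ is a genuine Borel probability measure on $C([0,\tau];\R^d)$, and the minor bookkeeping needed to invoke Theorem~\ref{thm:nosplitting} on the possibly shorter interval $[0,\tau]$ and on finite-measure pieces of $B$; everything else is the routine dictionary between the lagrangian maps $\XX_i$ and the measure $\llambda$.
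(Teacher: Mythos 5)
Your proposal is correct and follows essentially the same route as the paper: the paper likewise takes a finite-measure Borel subset $B\subset B_1\cap B_2$, forms $\llambda=\frac12\int(\delta_{\sXX_1(\cdot,x)}+\delta_{\sXX_2(\cdot,x)})\,d\Leb{d}_B(x)$ with $\Leb{d}_B$ the normalized Lebesgue measure, and applies Theorem~\ref{thm:nosplitting} with $T=\tau_1\wedge\tau_2$. Your write-up merely makes explicit the verifications (hypotheses (i)--(ii), the identification of $\llambda_x$, the exhaustion of $B_1\cap B_2$) that the paper leaves to the reader.
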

\begin{Proof} Take $B\subset B_1\cap B_2$ Borel with $\Leb{d}(B)$ finite, and apply 
Theorem~\ref{thm:nosplitting} with $T=\tau_1\wedge\tau_2$, $m=d$, and
$$
\llambda:=\frac{1}{2}\int \bigl(\delta_{\sXX_1(\cdot,x)}+\delta_{\sXX_2(\cdot,x)}\bigr)\,d\Leb{d}_B(x),
$$
where $\Leb{d}_B$ is the normalized Lebesgue measure on $B$.
\end{Proof}

If we consider a smooth vector field $\bb$ in a domain $\O$, a maximal flow of $\bb$ in 
$\O$ would be given by the trajectories of $\bb$ until they hit the boundary of $\O$.
In order to deal at the same time with bounded and unbounded domains (including the case $\O=\R^d$) we introduce a continuous potential function $V_\O:\O\to [0,\infty)$ satisfying
\begin{equation}\label{def:VOm}
\lim_{x\to\partial\O} V_\O(x)=\infty,
\end{equation}
meaning that for any $M>0$ there exists $K\Subset\O$ with $V_\O>M$ on $\O\setminus K$
(in particular, when $\Omega=\R^d$, $V_\O(x) \to \infty$ as $|x|\to \infty$).
For instance, an admissible potential is given by $ V_\O(x)= \max\{ [{\rm dist}(x, \R^d \setminus \O)]^{-1}, |x|\}$.

\begin{definition}[Hitting time in $\O$]\label{defn:exist-time}
Let $\tau>0$, $\O\subset\R^d$ open and $\eta: [0,\tau)\to \R^d$ continuous. 
We define the {\em hitting time} of $\eta$ in $\O$ as
$$
\hit{\O}{\eta}: = \sup\{ t\in [0,\tau):\ \max_{[0,t]}V_\O(\eta)<\infty\},
$$
with the convention $\hit{\O}{\eta}=0$ if $\eta(0)\notin\O$.
\end{definition}

It is easily seen that this definition is independent of the choice of $V_\O$, that $\hit{\O}{\eta}>0$ whenever
$\eta(0)\in\O$, and that
\begin{equation}\label{eq:blowuptimeeta}
\hit{\O}{\eta}<\tau\quad\Longrightarrow\quad\limsup_{t\uparrow\hit{\O}{\eta}}V_\O(\eta(t))=\infty.
\end{equation}

Using $V_\O$ we can also define the concept of maximal regular flow, where ``regular''
refers to the local bounded compression condition \eqref{eqn:incompr-o}.

\begin{definition} [Maximal regular flow in an open set $\O$]\label{def:maxflow-o} 
Let  $\bb: (0,T) \times \O \to \R^d$ be a Borel vector field. We say that a Borel map $\XX$
is a {\em maximal regular flow} relative to $\bb$ in $\O$ if there exists a Borel map
$T_{\O,\sXX}:\O\to (0,T]$ such that $\XX(t,x)$ is defined in the set $\{(t,x):\ t<T_{\O,\sXX}(x)\}$ and
 the following properties hold:
\begin{itemize}
\item[(i)] for $\Leb{d}$-a.e. $x\in\O$, $\XX(\cdot,x)\in {AC_{\rm loc}([0,T_{\sox}(x));\R^d)}$,  and solves the ODE $\dot x(t)=\bb(t,x(t))$ $\Leb{1}$-a.e.
in $(0,T_{\sox}(x))$, with the initial condition $\XX(x,0)=x$;
\item[(ii)] for any $\O'\Subset\O$ there exists a constant $C(\O',\XX)$ such that 
\begin{equation}
\label{eqn:incompr-o}
\XX(t,\cdot)_\#(\Leb{d}\res\{T_{\O'}>t\})\leq C(\O',\XX) \Leb{d}\res\O'\qquad \textit{for
all $t\in [0,T]$,}
\end{equation}
where
$$
T_{\O'}(x):=\left\{
\begin{array}{ll}
\hit{\O'}{\XX(\cdot,x)} &\text{ for $x\in\O'$},\\
0 & \text{ otherwise;}
\end{array}
\right.
$$ 
\item[(iii)] $\limsup\limits_{t\uparrow T_{\O,\ssXX}(x)}V_\O(\XX(t,x))=\infty$ for $\Leb{d}$-a.e. $x\in\O$ such that $T_{\sox}(x)<T$.
\end{itemize}
\end{definition}

Notice that \eqref{eqn:incompr-o} could be equivalently written as
$$
\XX(t,\cdot)_\#(\Leb{d}\res\{T_{\O'}>t\})\leq C(\O',\XX) \Leb{d}\qquad \text{for
all $t\in [0,T]$,}
$$
because the push-forward measure is concentrated on $\O'$; so the real meaning of this requirement
is that the push forward measure must have a bounded density w.r.t. $\Leb{d}$.

\begin{remark}[Maximal regular flows induce regular flows]\label{rem:induce_local-o}
{\rm Given any maximal regular flow $\XX$ in $\O$, $\tau\in (0,T)$, and a Borel set $B\subset\O$ such that
$T_{\sox}>\tau$ on $B$ and
$$
\bigl\{\XX(t,x):\ x\in B,\,t\in [0,\tau]\bigr\}\Subset\O,
$$ 
we have an induced local regular flow in the set $B$ up to time $\tau$.
}\end{remark}

\begin{remark} [Invariance in the equivalence class of $\bb$]\label{rem:invaria_flow}
{\rm It is important and technically useful (see for instance \cite{cetraro}) to underline that the concepts of local regular flow and of maximal regular
flow are invariant in the Lebesgue equivalent class, exactly as our constitutive assumptions (a-$\O$), (\bbb-$\O$), and the global/local bounds on the divergence of $\bb$. Indeed, for local regular flows, Definition~\ref{def:regflow}(ii) 
in conjunction with Fubini's theorem implies that for any $\Leb{1+d}$-negligible set $N\subset (0,T)\times\R^d$ the set
$$
\left\{x\in B:\ \Leb{1}(\{t\in (0,\tau):\ (t,\XX(t,x))\in N\})>0\right\}
$$
is $\Leb{d}$-negligible. An analogous argument, based on \eqref{eqn:incompr-o}, applies to maximal regular flows. 
}\end{remark}

\section{Existence and uniqueness of the maximal regular flow}\label{sec:local}

In this section
we consider a Borel vector field $\bb: (0,T) \times \O \to \R^d$ which satisfies the assumptions (a-$\O$), (\bbb-$\O$) of Section~\ref{sec:ass-a-c},
and such that the spatial divergence ${\rm div\,}\bb(t,\cdot)$ in the sense of distributions satisfies 
\begin{equation}\label{eqn:incompb-o-L}
\forall\,\O'\Subset\O,\,\,\,{\rm div\,}\bb(t,\cdot)\geq m(t)\quad\text{in $\O'$, with $L(\O',\bb):=\int_0^T |m(t)|\, dt<\infty$.}
\end{equation}

\begin{remark}\label{rmk:manycases-bouchut-crippa} {\rm Assumption \eqref{eqn:incompb-o-L} could be weakened to $m\in L^1(0,T_0)$ for all
$T_0\in (0,T)$, but we made it global in time to avoid time-dependent constants in our estimates (and, in any case, the maximal
flow could be obtained in this latter case by a simple gluing procedure w.r.t. time).
}\end{remark}

The first step in the construction of the maximal regular flow will be the following local existence result.

\begin{theorem}[Local existence]\label{thm:auxiliary} Let $\bb: (0,T) \times \O \to \R^d$ be a Borel vector field which satisfies (a-$\O$),  (\bbb-$\O$),
\eqref{eqn:incompb-o-L}, and let $A\Subset\O$ be open. Then there exist a Borel map $T_A:A\to (0,T]$ and a Borel map
$\XX(t,x)$, defined for $x\in A$ and $t\in [0,T_A(x)]$, such that:
 \begin{itemize}
\item[(a)] for $\Leb{d}$-a.e. $x\in A$, $\XX(\cdot,x)\in AC([0,T_A(x)];\R^d)$, $\XX(0,x)=x$, $\XX(t,x)\in A$ for all
$t\in [0,T_A(x))$, and $\XX(T_A(x),x)\in\partial A$ when $T_A(x)<T$;
\item[(b)] for $\Leb{d}$-a.e. $x\in A$, $\XX(\cdot,x)$ solves the ODE $\dot\gamma=\bb(t,\gamma)$ in $(0,T_A(x))$;
\item[(c)] $\XX(t,\cdot)_\#(\Leb{d}\res\{T_A>t\})\leq e^{L(A,\sbb)}\Leb{d}\res A$ for all $t\in [0,T]$, where $L(A,\bb)$ is
the constant in \eqref{eqn:incompb-o-L}.
\end{itemize}
\end{theorem}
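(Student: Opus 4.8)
The natural strategy is approximation: regularize $\bb$ by mollification so that the classical Cauchy-Lipschitz (or DiPerna-Lions with bounded divergence) theory applies, construct flows stopped at the exit time from $A$, and pass to the limit using a suitable compactness and uniqueness argument. The delicate point, emphasized in the introduction, is that truncating $\bb$ by a cutoff supported in $\O$ does not produce a vector field with $L^\infty$ divergence when $|\bb_t|\notin L^\infty_{\rm loc}$; hence one cannot simply invoke the standard theory with uniform bounds. Instead, I would work with vector fields $\bb^\e$ obtained by spatial mollification of $\bb$ at scale $\e$, restricted to a slightly larger open set $A'$ with $A\Subset A'\Subset\O$: these are smooth in space, and the one-sided bound \eqref{eqn:incompb-o-L} passes to $\bb^\e$ with the \emph{same} constant $L(A',\bb)$, because mollification commutes with the divergence and preserves the lower bound $m(t)$ after averaging. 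For such smooth-in-space fields the classical flow $\XX^\e$ exists locally, and Liouville's formula gives the compression bound $\XX^\e(t,\cdot)_\#(\Leb d\res\{T_A^\e>t\})\le e^{\int_0^t|m(s)|\,ds}\Leb d\le e^{L(A',\bb)}\Leb d$, where $T_A^\e(x)$ is the first exit time of $\XX^\e(\cdot,x)$ from $A$.

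**Passing to the limit.** The plan is to encode each $\XX^\e$ as a measure $\eeta^\e\in\Probabilities{C([0,T];\R^d)}$ on trajectories — extending each curve constantly after its exit time from $A$, or better, continuing to flow $\bb^\e$ so the curves stay in $A'$ — pushed forward from the normalized Lebesgue measure on $A$. The compression bound gives a uniform bound $(e_t)_\#\eeta^\e\le C\Leb d\res\overline{A'}$, and since the curves live in the bounded set $\overline{A'}$ with $\int_0^T\int|\bb^\e|\,d(e_t)_\#\eeta^\e\,dt$ controlled via (a-$\O$) (standard mollification estimate: $\int_0^T\int_{A'}|\bb^\e|\,dx\,dt\le\int_0^T\int_{A''}|\bb|\,dx\,dt$ for $A'\Subset A''\Subset\O$), the family $\{\eeta^\e\}$ is tight in $C([0,T];\R^d)$ — using the standard equi-integrability/modulus-of-continuity criterion for measures on path space. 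Extract a narrowly convergent subsequence $\eeta^\e\rightharpoonup\eeta$. The limit measure $\eeta$ is concentrated on integral curves of $\bb$: this is the one step requiring care, since $\bb$ is only $L^1_{\rm loc}$, but it follows by the now-standard argument (as in \cite{bologna}) testing the ODE against the limit, using that $\bb^\e\to\bb$ in $L^1(A''\times(0,T))$ together with the uniform bound $(e_t)_\#\eeta^\e\le C\Leb d$ to control $\int\int|\bb^\e(t,\eta(t))-\bb(t,\eta(t))|\,dt\,d\eeta^\e\to0$. The bound $(e_t)_\#\eeta\le e^{L(A',\bb)}\Leb d\res\overline{A'}$ passes to the limit.

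**From the measure to a flow.** Once $\eeta$ is concentrated on integral curves of $\bb$ with $(e_0)_\#\eeta=\Leb d_A$ and $(e_t)_\#\eeta\le C\Leb d$ for all $t$, apply Theorem~\ref{thm:nosplitting} (with $T$ replaced by a fixed small time, or after localizing the curves to stay in a compact subset of $\O$ as in that proof) to conclude that $\eeta$ is carried by a graph: there exist curves $\eta_x$ with $\eta_x(0)=x$ solving the ODE such that $\eeta=\int\delta_{\eta_x}\,d\Leb d_A(x)$. Set $\XX(t,x):=\eta_x(t)$ and let $T_A(x):=\hit{A}{\XX(\cdot,x)}$, the exit time from $A$. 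Then (b) is immediate, $\XX(0,x)=x$ gives the initial condition in (a), and the exit behaviour $\XX(T_A(x),x)\in\partial A$ when $T_A(x)<T$ follows from continuity of $\XX(\cdot,x)$ and the definition of the hitting time together with the fact that the limit curves take values in $\overline{A'}$ (so they do not escape to $\partial\O$ before reaching $\partial A$). For (c), restrict to the event $\{T_A>t\}$: since $\XX(t,\cdot)_\#(\Leb d\res\{T_A>t\})\le(e_t)_\#\eeta\le e^{L(A,\sbb)}\Leb d\res A$ (and the constant may be taken as $L(A,\bb)$ by letting $A'\downarrow A$, using monotone convergence of the $m(t)$-bounds), the compression estimate holds.

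**Main obstacle.** I expect the principal difficulty to be exactly the one flagged in the introduction: establishing the uniform compression bound $e^{L(A,\sbb)}$ for the \emph{approximating} flows without an $L^\infty$ bound on $\div\bb^\e$, and then ensuring this bound is inherited by the limit with the \emph{sharp} constant $L(A,\bb)$ rather than $L(A',\bb)$. Liouville's formula handles the smooth approximants cleanly because we only use the one-sided (lower) bound on the divergence and $\int_0^t|m|$, not two-sided $L^\infty$ control; the subtlety is the interplay between the mollification scale $\e$, the size of the collar $A'\setminus A$, and the exit times — one must let $\e\to0$ first and then shrink $A'$ to $A$, checking that the exit times of the approximants converge appropriately (upper semicontinuity of exit times under uniform convergence of curves, plus the fact that trajectories cannot linger on $\partial A$ by the compression bound applied on a slightly larger set). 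A secondary technical point is justifying the concentration of $\eeta$ on integral curves of the merely $L^1_{\rm loc}$ field $\bb$, but this is by now routine given the uniform compression bound.
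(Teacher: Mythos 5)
Your strategy matches the paper's: mollify $\bb$, run the classical flow stopped at $\partial A$, obtain the compression constant from Liouville's formula and the one-sided divergence bound, encode the approximating flows as probability measures on paths, pass to the limit by tightness and a stability argument for $L^1$ fields vanishing on the boundary (Theorem~\ref{thm:tightness-o} / Proposition~\ref{prop:auxiliary}(ii)), and finally extract a single-valued flow from the limit measure.

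The gap is in the extraction step. Once the trajectories are stopped at $\partial A$ (or continued and stopped at $\partial A'$), the limit measure $\eeta$ is \emph{not} concentrated on integral curves of $\bb$ on all of $[0,T]$ and does \emph{not} satisfy $(e_t)_\#\eeta\le C\Leb{d}$ globally: the curves that have already exited accumulate on the Lebesgue-null set $\partial A$, so $(e_t)_\#\eeta$ acquires a singular part there, and after the exit time the frozen curves solve the ODE for $\chi_A\bb$ but not for $\bb$. Hence the hypotheses of Theorem~\ref{thm:nosplitting} fail for $\eeta$ itself, and the conclusion you assert ($\eeta=\int\delta_{\eta_x}\,d\Leb{d}_A(x)$ with $\eta_x$ solving the ODE on $[0,T]$) cannot hold as stated; your parenthetical about ``localizing to a compact subset of $\O$'' addresses a different issue (curves approaching $\partial\O$), not this one. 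This is precisely why the paper works with regular generalized flows in $\overline A$ (Definition~\ref{def:dregbar}), where \eqref{eqn:noconcentration-o} is imposed only after restriction to the open set $A$, and why Proposition~\ref{prop:auxiliary}(i) is a genuine piece of work: one restricts each conditional measure $\eeta_x$ to $\Gamma_q=\{\hit{A}{\cdot}>q\}$ for rational $q$, normalizes on the set $\{\eeta_x(\Gamma_q)\ge\delta\}$ (which produces the compression constant $C/\delta$), and only then invokes Theorem~\ref{thm:nosplitting} with $\O=A$ and final time $q$; the conclusion is the weaker but correct one that the hitting time is $\eeta_x$-a.s.\ constant and $(e_t)_\#\eeta_x$ is a Dirac mass for $t\le T_A(x)$ only. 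A related, smaller inaccuracy: the compression bound that survives the limit is the localized form \eqref{eq:pasquetta5} on $\{\hit{A'}{\cdot}>t\}$ for $A'\Subset A$, and the constant $e^{L(A,\bb)}$ in part (c) is recovered by combining \eqref{eq:pasquetta4} with $\limsup_{\eps}L(A',\bb_\eps)\le L(A,\bb)$ and then letting $A'\uparrow A$; the bookkeeping must be carried out on the hitting-time sets rather than on the full push-forward, for the same reason as above.
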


Notice that since the statement of the theorem is local (see also the appendix, in connection with
property (\bbb-$\O$)), we need only to prove it under the assumption
$|\bb|\in L^1((0,T)\times\O)$, which is stronger than (a-$\O$).

We will obtain Theorem~\ref{thm:auxiliary} via an approximation procedure which involves the concept of 
regular generalized flow in closed domains, where now ``regular'' refers to the fact that the bounded compression condition is imposed
only in the interior of the domain.

\begin{definition}[Regular generalized flow in $\overline{A}$]\label{def:dregbar}
Let  $A\subset\R^d$ be an open set and let $\cc: (0,T) \times\overline{A}\to \R^d$ be a Borel vector field. A probability 
measure $\eeta$ in $C([0,T];\R^d)$ is said to be a {\em regular generalized flow} in $\overline{A}$ if the following two conditions hold:
\begin{itemize}
\item[(i)] $\eeta$ is concentrated on
$$
\bigl\{\eta\in AC([0,T];\overline{A}):\ \dot\eta(t)=\cc(t,\eta(t))\,\,\text{for $\Leb{1}$-a.e. $t\in (0,T)$}\bigr\};
$$
\item[(ii)] there exists $C:= C(\eeta)\in (0,\infty)$ satisfying
\begin{equation}\label{eqn:noconcentration-o}
((e_t)_\#\eeta) \res A \leq C\Leb{d}\qquad\forall \,t\in [0,T].
\end{equation}
\end{itemize}
\end{definition}
Any constant $C$ for which \eqref{eqn:noconcentration-o} holds is called a {\em compressibility constant} of $\eeta$.

The class of regular generalized flows enjoys good tightness and stability properties. We state them in the
case of interest for us, namely when the velocity vanishes at the boundary.

\begin{theorem}[Tightness and stability of regular generalized flows in $\overline{A}$]\label{thm:tightness-o}
Let $A\subset\R^d$ be a bounded open set, let $\cc,\,\cc^n : (0,T) \times \overline{A} \to \R^d$ be Borel vector fields such that 
$\cc=\cc^n=0$ on $(0,T) \times (\R^d \setminus A)$ and
\begin{equation}
\label{eqn:bn-conv}
\lim_{n\to\infty} \cc^n=\cc \qquad \textit{in }L^1((0,T) \times A;\R^d).
\end{equation} 
Let  $\eeta^n\in\Probabilities{C([0,T];\overline{A})}$ be regular generalized flows of $\cc^n$ in $\overline{A}$ and let us assume that
the best compressibility constants $C_n$ of $\eeta^n$ satisfy $\sup_n C_n <\infty$.
Then $(\eeta^n)$ is tight, any limit point $\eeta$ is a regular generalized  flow of $\cc$ in $\overline{A}$, and the following implication holds:
\begin{equation}\label{eq:pasquetta4}
((e_t)_\#\eeta^n\res\Gamma) \res A'\leq c_n\Leb{d} \quad \text{for some $c_n>0$}\qquad\Longrightarrow\quad
((e_t)_\#\eeta\res\Gamma) \res A'\leq \liminf_n c_n\Leb{d}
\end{equation}
for any choice of open sets $\Gamma\subset C([0,T];\overline{A})$ and $A'\subset A$.
\end{theorem}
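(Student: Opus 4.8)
The plan is to obtain tightness of $(\eeta^n)$ from a uniform bound on a suitable functional measuring oscillation of the curves, and then to pass to the limit in the two defining conditions (i) and (ii) of a regular generalized flow, together with the refinement \eqref{eq:pasquetta4}.

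\emph{Step 1: Tightness.} Since $\cc^n\to\cc$ in $L^1((0,T)\times A;\R^d)$ and $\cc^n=0$ outside $A$, the integrals $\int_0^T\!\int_{\overline A}|\cc^n(t,x)|\,dx\,dt$ are uniformly bounded, say by $M$. Using condition (ii), $((e_t)_\#\eeta^n)\res A\le C_*\Leb{d}$ with $C_*:=\sup_n C_n<\infty$, and the fact that each $\eeta^n$ is concentrated on integral curves of $\cc^n$ contained in the bounded set $\overline A$, I would estimate, for $s<t$,
\begin{equation*}
\int |\eta(t)-\eta(s)|\,d\eeta^n(\eta)\le \int_s^t\!\!\int |\cc^n(r,\eta(r))|\,d\eeta^n(\eta)\,dr
=\int_s^t\!\!\int_{A}|\cc^n(r,x)|\,d[(e_r)_\#\eeta^n](x)\,dr\le C_*\!\int_s^t\!\!\int_A|\cc^n(r,x)|\,dx\,dr.
\end{equation*}
Absolute continuity of $t\mapsto\int_0^t\!\int_A|\cc^n|\,dx\,dr$, uniform in $n$ (a consequence of the $L^1$ convergence, which makes the family $\{|\cc^n|\}$ uniformly integrable in space-time), gives an equi-integrability/modulus-of-continuity estimate on the curves in an $L^1(\eeta^n)$ sense. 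Combined with the uniform bound on $\sup_{[0,T]}|\eta|$ (curves live in the bounded set $\overline A$), a standard Arzelà–Ascoli plus Markov-inequality argument yields a sequence of compact sets $\mathcal K_\varepsilon\subset C([0,T];\overline A)$ with $\sup_n\eeta^n(C([0,T];\overline A)\setminus\mathcal K_\varepsilon)<\varepsilon$; hence $(\eeta^n)$ is tight by Prokhorov. Up to a subsequence, $\eeta^n\to\eeta$ narrowly, with $\eeta\in\Probabilities{C([0,T];\overline A)}$.

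\emph{Step 2: The limit is concentrated on integral curves of $\cc$.} This is the step I expect to be the main obstacle, since $\cc^n\to\cc$ only in $L^1$ and the curves may pass through the (Lebesgue-negligible but not $\eeta$-negligible a priori) bad set of $\cc$. The tool is condition (ii), which forces $((e_t)_\#\eeta^n)\res A\le C_*\Leb{d}$, hence $((e_t)_\#\eeta)\res A\le C_*\Leb{d}$ by narrow convergence and lower semicontinuity on the open set $A$; this bound lets one integrate $\eeta$-a.e.\ pathwise quantities against Lebesgue-negligible modifications of $\cc$. Concretely, I would test the functional $\eta\mapsto\big|\eta(t)-\eta(0)-\int_0^t\cc(r,\eta(r))\,dr\big|\wedge 1$: this is bounded and, after mollifying $\cc$ in space by $\cc*\rho_\delta$, continuous on $C([0,T];\overline A)$ for the mollified integrand, so narrow convergence passes the limit with $\cc^n$ replaced by $\cc*\rho_\delta$; the error between $\cc^n$ (resp. $\cc$) and $\cc*\rho_\delta$ along the curves is controlled by $C_*\|\cc^n-\cc*\rho_\delta\|_{L^1}$ (resp. $C_*\|\cc-\cc*\rho_\delta\|_{L^1}$) using (ii), and both tend to $0$ letting first $n\to\infty$ and then $\delta\to 0$. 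This gives $\eta(t)=\eta(0)+\int_0^t\cc(r,\eta(r))\,dr$ for $\eeta$-a.e. $\eta$ and every $t$, i.e. condition (i); since $\cc=0$ outside $A$, curves that touch $\R^d\setminus A$ stay there, so $\eeta$ remains concentrated on $AC([0,T];\overline A)$.

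\emph{Step 3: Compressibility and the refined implication \eqref{eq:pasquetta4}.} The bound $((e_t)_\#\eeta)\res A\le C_*\Leb{d}$ from Step 2 shows $\eeta$ is a regular generalized flow of $\cc$ in $\overline A$. For \eqref{eq:pasquetta4}, fix open sets $\Gamma\subset C([0,T];\overline A)$ and $A'\subset A$. For any nonnegative $\phi\in C_c(A')$, narrow convergence together with lower semicontinuity of $\eeta\mapsto\eeta(\Gamma)$-type functionals on open sets gives
\begin{equation*}
\int\phi(\eta(t))\,d[(e_t)_\#(\eeta\res\Gamma)](\eta(t))\le\liminf_n\int\phi(\eta(t))\,d[(e_t)_\#(\eeta^n\res\Gamma)](\eta(t))\le\big(\liminf_n c_n\big)\int_{A'}\phi\,d\Leb{d},
\end{equation*}
where the first inequality uses that $\mathbf 1_\Gamma$ is lower semicontinuous and the composition with the continuous map $e_t$ and the continuous nonnegative $\phi$ preserves this; by arbitrariness of $\phi$ this yields $((e_t)_\#\eeta\res\Gamma)\res A'\le(\liminf_n c_n)\Leb{d}$, as claimed. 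A minor technical point to address is that $\eeta\res\Gamma$ need not be a narrow limit of $\eeta^n\res\Gamma$; one circumvents this by using that for lower semicontinuous $\ge 0$ functionals $F$ one has $\int F\,d\eeta\le\liminf_n\int F\,d\eeta^n$, applied to $F(\eta)=\mathbf 1_\Gamma(\eta)\,\phi(\eta(t))$.
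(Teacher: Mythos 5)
Your Steps 2 and 3 follow essentially the paper's route (approximate $\cc$ by continuous fields and use the compressibility constants to control the error along curves; lower semicontinuity under narrow convergence for the measure bounds, including \eqref{eq:pasquetta4}), but Step 1 has a genuine gap. The estimate you derive,
\[
\int|\eta(t)-\eta(s)|\,d\eeta^n(\eta)\ \le\ C_*\int_s^t\!\!\int_A|\cc^n(r,x)|\,dx\,dr ,
\]
controls single increments in an $L^1(\eeta^n)$ sense for each fixed pair $s<t$, but tightness in $C([0,T];\overline A)$ requires controlling $\eeta^n\bigl(\{\eta:\ \omega_\eta(\delta)\ge\eps\}\bigr)$, where $\omega_\eta(\delta)=\sup_{|t-s|\le\delta}|\eta(t)-\eta(s)|$ is the modulus of continuity of the whole curve. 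This does not follow from the increment bound: covering $[0,T]$ by intervals of length $\delta$ and summing only reproduces the total $L^1$ norm of $\cc^n$, which does not tend to $0$ as $\delta\to0$ (a uniform bound on $\int_0^T|\dot\eta|\,dt$ gives no equicontinuity, and the sublevels of the total-variation functional are not compact in $C([0,T])$). The missing idea, which is the crux of the paper's proof, is to upgrade $L^1$-boundedness to uniform integrability: by Dunford--Pettis the $L^1$-convergent family $\{\cc^n\}$ admits a superlinear convex modulus of integrability $F$ with $\sup_n\int_0^T\int_{\overline A}F(|\cc^n|)\,dx\,dt<\infty$; transferring this through the compressibility constants yields $\sup_n\int\int_0^TF(|\dot\eta|)\,dt\,d\eeta^n(\eta)<\infty$, and the functional $\Sigma(\eta)=\int_0^TF(|\dot\eta|)\,dt$ (set to $+\infty$ off $AC([0,T];\overline A)$) is lower semicontinuous with compact sublevels by Ascoli--Arzel\`a and Ioffe's theorem; Chebyshev then gives tightness, and lower semicontinuity of $\Sigma$ gives that any limit is concentrated on $AC([0,T];\overline A)$.

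A second, smaller flaw is in Step 2: your error bound $\int\int_0^t|\cc^n-\cc*\rho_\delta|(r,\eta(r))\,dr\,d\eeta^n\le C_*\|\cc^n-\cc*\rho_\delta\|_{L^1((0,T)\times A)}$ invokes condition (ii), which only controls $(e_r)_\#\eeta^n$ restricted to the \emph{open} set $A$. Curves may spend a set of times of positive measure on $\partial A$, where $\cc^n=0$ but $\cc*\rho_\delta$ need not vanish; that contribution is of order $\|\rho_\delta\|_{L^\infty}\int_0^T\int_{\{x\in A:\ {\rm dist}(x,\partial A)<\delta\}}|\cc|\,dx\,dt$, which need not tend to $0$ as $\delta\to0$. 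The paper sidesteps this by approximating with continuous fields vanishing on $[0,T]\times\partial A$; in your mollification scheme it suffices to replace $\cc*\rho_\delta$ by $(\cc\,\chi_{A_\delta})*\rho_\delta$ with $A_\delta=\{x\in A:\ {\rm dist}(x,\partial A)>2\delta\}$, whose support is compactly contained in $A$. With these two repairs your outline matches the paper's argument.
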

\begin{Proof}
By Dunford-Pettis' theorem, since the family $\{\cc^n\}$ is compact in $L^1(\overline{A};\R^d)$ (recall that $\cc_n(t,\cdot)$ vanish outside of $A$), there exists a modulus of integrability 
for $\cc^n$, namely an increasing, convex, 
superlinear function $F : [0,\infty) \to [0,\infty)$ such that $F(0) = 0$ and
\begin{equation}\label{eqn:high-int}
\sup_{n\in \N} \int_0^T\int_{\overline{A}} F(|\cc^n(t,x)|) \, dxdt <\infty.    
\end{equation}

Let us introduce the functional $\Sigma:C([0,T];\R^d)\to [0,\infty]$ as follows
$$
\Sigma(\eta):=\left\{
\begin{array}{ll}
\int_0^TF(|\dot\eta(t)|)\,dt & \text{if $\eta\in AC([0,T];\overline{A})$,}\\
\infty & \text{if $\eta\in C([0,T];\R^d)\setminus AC([0,T];\overline{A})$.}
\end{array}
\right.
$$
Using Ascoli-Arzel\`a theorem, the compactness of $\overline{A}$, and a well-known lower semicontinuity result 
due to Ioffe (see
for instance \cite[Theorem~5.8]{afp}), it turns out that $\Sigma$ is lower 
semicontinuous and coercive, namely its sublevels $\{\Sigma\leq M\}$ are compact. 

Since $\eeta^n$ is concentrated on $AC([0,T];\overline{A})$ we get
$$
\int\Sigma\,d\eeta^n=
\int\!\int_0^TF(|\dot\eta|)\,dt\,d\eeta^n(\eta)=
\int_0^T\!\int_{\overline{A}}F(|\cc^n|)\,d[(e_t)_\#\eeta^n] \,dt
\leq C_n
\int_0^T\!\int_{\overline{A}}F(|\cc^n|)\,dx\,dt,
$$
so that that $\int\Sigma\,d\eeta^n$ is uniformly bounded thanks to \eqref{eqn:high-int}.
Therefore Prokhorov compactness theorem provides the existence of limit points.
Since $\Sigma$ is lower semicontinuous we obtain that any limit point $\eeta$
satisfies $\int\Sigma\,d\eeta<\infty$, therefore
$\eeta$ is concentrated on $AC([0,T];\overline{A})$.

Let $C := \liminf_{n\in \N} C_n <\infty$. Since $(e_t)_\#\eeta^n$ narrowly converge to $(e_t)_\#\eeta$, we know that
for any open set $A'\subset A$ there holds 
$$(e_t)_\#\eeta (A') \leq \liminf_{n\to \infty} (e_t)_\#\eeta^n(A') \leq C\Leb{d}(A')\qquad\forall \,t\in [0,T].
$$
Since $A'$ is arbitrary we deduce that $\eeta$ satisfies \eqref{eqn:noconcentration-o}. A similar argument
provides its localized version \eqref{eq:pasquetta4}.
 To show that $\eeta$ is concentrated on integral curves of $\cc$, it suffices to show that
\begin{equation}\label{valleggi}
\int\left|\eta(t)-\eta(0)-\int_0^t\cc(s,\eta(s))\,ds\right|\,d\eeta(\eta)=0
\end{equation}
for any $t\in [0,T]$. The technical difficulty is that this test function,
due to the lack of regularity of $\cc$, is not continuous. 
To this aim, we prove that
\begin{equation}\label{valleggi1}
\int\left|\eta(t)-\eta(0)-\int_0^t\cc'(s,\eta(s))\,ds\right|\,d\eeta(\eta)\leq C
\int_{(0,T)\times A}|\cc-\cc'|\,dx \, dt
\end{equation}
for any continuous vector field $\cc':[0,T]\times\overline{A}\to\R^d$ with $\cc'=0$ in $[0,T]\times\partial A$. 
Then, choosing a sequence $(\cc_n')$ of such vector fields
converging to $\cc$ in $L^1(\overline{A};\R^d)$ and noticing that
$$
\int\!\int_0^T|\cc(s,\eta(s))-\cc_n'(s,\eta(s))|\,dsd\eeta(\eta)
=\int_0^T\!\!\int_{A}|\cc-\cc_n'|\,d(e_s)_\# \eeta \,ds\leq C_n\int_{(0,T)\times A}\!\!\!|\cc-\cc_n'|\,dx\, dt,
$$
converges to $0$ as $n$ goes to $\infty$,
we can take the limit in \eqref{valleggi1} with $\cc'=\cc_n'$ to obtain \eqref{valleggi}.

It remains to show \eqref{valleggi1}. This is a limiting argument based on the fact
that \eqref{valleggi} holds for $\cc^n$, $\eeta^n$:
\begin{eqnarray*}
\int\left|\eta(t)-\eta(0)-\int_0^t\cc'(s,\eta(s))\,ds\right|\,d\eeta^n(\eta)&=&
\int\left|\int_0^t\bigl(\cc^n(s,\eta(s)-\cc'(s,\eta(s))\bigr)\,ds\right|\,d\eeta^n(\eta)\\
&\leq&
\int\int_0^t|\cc^n-\cc'|(s,\eta(s))\,ds \, d\eeta^n(\eta)
\\
&=&
\int_0^t\int_{A}|\cc^n-\cc'|\,d[(e_s)_\#\eeta^n] \,ds
\\
&\leq& C_n
\int_0^t\int_{A}|\cc^n-\cc'|\,dx\, ds.
\end{eqnarray*}
Taking the limit in the chain of inequalities above we obtain \eqref{valleggi1}.
\end{Proof}

Now we show how Theorem~\ref{thm:auxiliary} can be deduced from the existence of regular generalized flows in
$\overline{A}$; at the same time, we show that flows associated to sufficiently smooth vector fields induce
regular generalized flows (actually even classical ones, but we will need them in generalized form to take limits).

\begin{proposition}\label{prop:auxiliary}
(i) Let $\bb: (0,T) \times \O \to \R^d$ be a Borel vector field which satisfies (a-$\O$) and  (\bbb-$\O$), 
let $A\Subset\O$ be an open set, and
let $\eeta$ be a regular generalized flow in $\overline{A}$ relative to $\cc=\chi_A\bb$ with compressibility constant $C$ and that satisfies
$(e_0)_\#\eeta=\rho_0\Leb{d}$ with $\rho_0>0$ $\Leb{d}$-a.e. in $A$.
Then there exist $\XX$ and $T_A$ as in Theorem~\ref{thm:auxiliary}(a)-(b) that satisfy
\begin{equation}
\label{eqn:compr-rho0}
\XX(t,\cdot)_\# (\rho_0\res\{T_A>t\})\leq C\Leb{d}\res A
\end{equation}
 for all $t\in [0,T]$.

(ii) Let $\bb\in C^\infty([0,T]\times\overline{A};\R^d)$. Then there exists a regular generalized
flow $\eeta$ associated to $\bb\chi_A$, with $(e_0)_\#\eeta$ equal to the normalized Lebesgue measure in $A$ 
and satisfying
\begin{equation}\label{eq:pasquetta5}
((e_t)_\#\eeta\res \{\hit{A'}{\cdot}>t\}) \res A'\leq \frac{e^{L(A',\sbb)}}{\Leb{d}(A)}\Leb{d}\qquad\forall \,t\in [0,T]
\end{equation}
for any open set $A'\Subset A$.
\end{proposition}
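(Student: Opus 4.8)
The plan is to prove the two parts of Proposition~\ref{prop:auxiliary} essentially independently, with part (ii) being the ``easy input'' (classical flow of a smooth field, put in generalized form) and part (i) being the ``extraction of a pointwise flow from a generalized flow'', which is where the real work lies.

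For part (ii), I would start from the classical flow $\mathbf Y(t,x)$ of the smooth, compactly-supported-in-$A$ vector field $\mathbf b\chi_A$; since $\mathbf b$ vanishes on $\partial A$ (after multiplying by $\chi_A$) trajectories started in $A$ stay in $\overline A$, so $\eeta:=\int_A \delta_{\sYY(\cdot,x)}\,d\,\frac{\Leb d\res A}{\Leb d(A)}(x)$ is a probability on $C([0,T];\overline A)$ concentrated on integral curves, i.e.\ (i) of Definition~\ref{def:dregbar} holds. For the compressibility bound and its localized form \eqref{eq:pasquetta5}, I would run the standard Liouville/Jacobian computation along trajectories that remain in a fixed $A'\Subset A$: on the event $\{\hit{A'}{\cdot}>t\}$ the trajectory has stayed in $A'$ up to time $t$, where ${\rm div\,}\mathbf b\geq m(s)$ by \eqref{eqn:incompb-o-L}, so the Jacobian $J(t,x)$ of $\mathbf Y(t,\cdot)$ satisfies $\frac{d}{dt}\log J = {\rm div\,}\mathbf b(t,\mathbf Y(t,x))\geq m(t)$, hence $J(t,x)\geq e^{\int_0^t m}\geq e^{-L(A',\sbb)}$ on that set; pushing forward and changing variables gives exactly \eqref{eq:pasquetta5}. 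This part is routine.

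For part (i), the key point is that Theorem~\ref{thm:nosplitting} applies to $\llambda:=\eeta/\eeta(C([0,T];\R^d))$ — or rather to (a suitable rescaling of) the part of $\eeta$ living on curves with image in a fixed compact subset of $\O$, using that $\cc=\chi_A\mathbf b$ satisfies (a-$\O$), (\bbb-$\O$) — so the conditional measures $\eeta_x$ induced by $e_0$ are Dirac masses $\delta_{\eta_x}$ for $\rho_0\Leb d$-a.e.\ $x$, with $\eta_x\in AC([0,T];\overline A)$ solving $\dot\eta=\cc(t,\eta)$, $\eta_x(0)=x$. Then I would set, for such $x$, $T_A(x):=\hit{A}{\eta_x}$ (first exit time from $A$) and $\mathbf X(t,x):=\eta_x(t)$ for $t\in[0,T_A(x)]$; Borel measurability of $x\mapsto(T_A(x),\mathbf X(\cdot,x))$ follows from measurable-selection/disintegration measurability (the map $x\mapsto\eta_x$ is Borel as a disintegration). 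Properties (a)–(b) of Theorem~\ref{thm:auxiliary} are then immediate: $\mathbf X(\cdot,x)$ is absolutely continuous, solves the ODE while inside $A$, $\mathbf X(t,x)\in A$ for $t<T_A(x)$ by definition of the hitting/exit time, and $\mathbf X(T_A(x),x)\in\partial A$ when $T_A(x)<T$ because $\eta_x$ is continuous with values in $\overline A$ and leaves $A$ exactly at $T_A(x)$. Finally, for the compression bound \eqref{eqn:compr-rho0}: for a nonnegative $\phi\in C_c(\R^d)$ and $t\in[0,T]$,
$$
\int \phi(\mathbf X(t,x))\chi_{\{T_A>t\}}(x)\,\rho_0(x)\,dx
=\int \phi(\eta(t))\,\chi_{\{\eta([0,t])\subset A\}}(\eta)\,d\eeta(\eta)
\le \int_A \phi\,d[(e_t)_\#\eeta]\res A\le C\int\phi\,dx,
$$
where the middle equality uses $\eeta=\int\delta_{\eta_x}\,d(e_0)_\#\eeta(x)$ and $(e_0)_\#\eeta=\rho_0\Leb d$, and the last inequality is \eqref{eqn:noconcentration-o}; one must check $\{\eta:\eta([0,t])\subset A\}$ is a Borel (in fact open, since $A$ is open and evaluation is continuous) subset of curve space so the indicator is a legitimate integrand.

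The main obstacle I expect is the bookkeeping around Theorem~\ref{thm:nosplitting}: that theorem is stated for a probability measure concentrated on curves with image in a fixed compact subset of $\O$ and with $(e_t)_\#\llambda\le C_0\Leb d$ on all of $\R^d$, whereas here $\eeta$ only has the localized bound \eqref{eqn:noconcentration-o} inside $A$ (the curves do lie in the compact set $\overline A\subset\O$, so that part is fine). One has to observe that $\eeta$ is in fact concentrated on $AC([0,T];\overline A)$, that $(e_t)_\#\eeta\le C\Leb d$ holds on $\overline A$ hence on $\R^d$ since the pushforward is supported in $\overline A$, and then normalize to a probability; the measurability of the selection $x\mapsto\eta_x$ and of the derived $T_A$ also needs a line. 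Everything else is a direct translation of the generalized flow into a pointwise flow.
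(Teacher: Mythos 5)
Your part (ii) is essentially the paper's argument (classical flow of the smooth field $\bb$ stopped at $\partial A$, Liouville equation for the Jacobian, Gronwall), and is fine apart from the slightly loose phrasing about the ``flow of $\bb\chi_A$'', which is not smooth. The problem is in part (i), where you apply Theorem~\ref{thm:nosplitting} to (a normalization of) the whole measure $\eeta$ on the full interval $[0,T]$. The step ``$(e_t)_\#\eeta\le C\Leb{d}$ holds on $\overline A$ hence on $\R^d$ since the pushforward is supported in $\overline A$'' is false: the defining bound \eqref{eqn:noconcentration-o} of a regular generalized flow is $((e_t)_\#\eeta)\res A\le C\Leb{d}$, a restriction to the \emph{open} set $A$ only, and $(e_t)_\#\eeta$ will in general carry a singular part on $\partial A$ --- indeed, since $\cc=\chi_A\bb$ vanishes on $\partial A$, trajectories that reach the boundary can stop there and mass accumulates on $\partial A$, which is typically $\Leb{d}$-negligible. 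For the same reason hypothesis (i) of Theorem~\ref{thm:nosplitting} fails if you take the ambient open set to be $A$ (the curves take values in $\overline A$, not $A$), and if you take it to be $\O$ you would need $\cc=\chi_A\bb$ to satisfy (\bbb-$\O$), which is not assumed. Consequently your conclusion that $\eeta_x$ is a Dirac mass on all of $[0,T]$ is both unproved and stronger than what is true: after the hitting time the disintegration need not be a Dirac mass.

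The paper's proof avoids this by localizing in time before applying Theorem~\ref{thm:nosplitting}: for each rational $q$ one restricts $\eeta_x$ to the set $\Gamma_q=\{\hit{A}{\eta}>q\}$ of curves that stay strictly inside $A$ up to time $q$, normalizes on $\{\eeta_x(\Gamma_q)\ge\delta\}$, and applies Theorem~\ref{thm:nosplitting} on $[0,q]$ with ambient set $A$ (where $\cc=\bb$ and the compression bound is exactly \eqref{eqn:noconcentration-o} divided by $\delta$). This yields that $(e_t)_\#\eeta_x$ is a Dirac mass only up to the hitting time, and one must additionally argue that $\hit{A}{\eta}$ is $\eeta_x$-a.s.\ constant (which is what makes $T_A(x)$ well defined); both facts are needed for your final computation of \eqref{eqn:compr-rho0}, whose middle equality implicitly uses that $\{T_A>t\}$ corresponds, curve by curve, to $\{\eta([0,t])\subset A\}$. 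You should rework part (i) along these lines.
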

\begin{Proof} We first prove (i). Set $\mu_0=\rho_0\Leb{d}$ and consider a family $\{\eeta_x\}\subset\Probabilities{C([0,T];\overline{A})}$
of conditional probability measures, concentrated on 
$$
\bigl\{\eta\in AC([0,T];\overline{A}):\ \text{$\dot\eta=\cc(t,\eta)$ $\Leb{1}$-a.e. in $(0,T)$, $\eta(0)=x$}\bigr\}
$$
and representing $\eeta$, i.e., $\int \eeta_x\,d\mu_0(x)=\eeta$. We claim that $\mu_0$-almost every $x\in A$:
\begin{enumerate}
\item[(1)]
$\hit{A}{\eta}$ is equal to a positive constant for $\eeta_x$-a.e. $\eta$;
\item[(2)]
if $T_A(x)$ is the constant in (1), $(e_t)_\#\eeta_x$ is a Dirac mass for all $t\in [0,T_A(x)]$.
\end{enumerate}
By our assumption on $\mu_0$, the properties stated in the claim hold $\Leb{d}$-a.e. in $A$. Hence,
given the claim, if we define 
$$
\XX(t,x):=\int \eta(t)\,d\eeta_x(\eta)
$$
then for $\Leb{d}$-a.e. $x\in A$ the integrand $\eta(t)$ is independent of $\eta$ as soon as $t<T_A(x)$,
hence $\XX(t,x)$ satisfies (a) and (b) in the statement of Theorem~\ref{thm:auxiliary}. The compressibility property \eqref{eqn:compr-rho0} follows
immediately from \eqref{eqn:noconcentration-o}.

Let us prove our claim. We notice that the hitting time is positive for $\mu_0$-a.e. $x\in A$. 
For $q\in\Q\cap (0,T)$, we shall denote by $\Gamma_q$ the set $\{\eta:\ \hit{A}{\eta}>q\}$ and by
$\Sigma^q:\Gamma_q\to C([0,q];A)$ the map induced by restriction to $[0,q]$, namely $\Sigma^q(\eta)=\eta\vert_{[0,q]}$.

In order to prove the claim it clearly suffices to 
show that, for all $q\in\Q\cap (0,T)$, $\Sigma^q_\#(\eeta_x\res\Gamma_q)$ is either a Dirac mass or it is null.
So, for $q\in\Q\cap (0,T)$ and $\delta\in (0,1)$ fixed, it suffices to show that
$$
\llambda_x:=\frac{1}{\eeta_x(\Gamma_q)}\Sigma_\#^q(\eeta_x\res\Gamma_q)\in\Probabilities{C([0,q];A)}
$$
is a Dirac mass for $\mu_0$-a.e. $x$ satisfying $\eeta_x(\Gamma_q)\geq\delta$. 

By construction the measures $\llambda_x$ satisfy $\llambda_x\leq\Sigma_\#^q(\eeta_y\res\Gamma_q)/\delta$ 
and they are concentrated on curves $[0,q]\ni t \mapsto \eta(t)$ starting at $x$ and solving the
ODE $\dot\eta=\bb(t,\eta)$ in $(0,q)$. Therefore
$$
\llambda:=\int_{\{x\in A:\ \seeta_{x}(\Gamma_q)\geq\delta\}}
\llambda_{x}\,d\mu_0(x)
\in\Probabilities{C([0,q];A)}
$$
satisfies all the assumptions of Theorem~\ref{thm:nosplitting} with $T=q$ and $\O=A$, provided we check \eqref{eqn:noconcentration}.
To check this property with $C_0=C/\delta$, for $t\in [0,q]$ and $\varphi\in C_c(A)$ nonnegative we use the fact that $\llambda_y \leq \Sigma_\#^q(\eeta_y\res\Gamma_q)/\delta$ and the fact that $C$ is a compressibility constant of $\eeta$ to estimate
$$
\int_{\R^d}\varphi\,d(e_t)_\#\llambda\leq
\frac 1\delta\int_{\R^d}\varphi\,d(e_t)_\#(\eeta \res \Gamma_q)
\leq\frac 1\delta\int_{\R^d}\varphi\,d(e_t)_\#\eeta
\leq \frac{C}{\delta} \int_{A}\varphi\, dx.
$$
Therefore Theorem~\ref{thm:nosplitting}
can be invoked: $\llambda_{x}$ is a Dirac mass for $\mu_0$-a.e. $x$ and this gives
that $\llambda_x$ is a Dirac mass $\mu_0$-a.e. in $\{\eeta_x(\Gamma_q)\geq\delta\}$.
This concludes the proof of (i).

For (ii), we begin by defining $\eeta$ with the standard Cauchy-Lipschitz theory. More precisely,
for
$x\in A$ we  let $\XX(t,x)$ be the unique solution to the ODE $\dot\eta=\bb(t,\eta)$
with $\eta(0)=x$ until the first time
$T_A(x)$ that $\XX(t,x)$ hits $\partial A$, and then we define $\XX(t,x)=\XX(t,T_A(x))$ for all $t\in [T_A(x),T]$. Finally,
denoting by $\Leb{d}_A$ the normalized Lebesgue measure in $A$, we define $\eeta$ as the law under
$\Leb{d}_A$ of the map $x\mapsto \XX(\cdot,x)$. With this
construction it is clear that condition (i) in Definition~\ref{def:dregbar} holds. 

Let us check condition (ii) as
well, in the stronger form \eqref{eq:pasquetta5}. Recall that $\XX$ is smooth before the hitting time and that the map
$t\mapsto J(t):={\rm det\,}\nabla_x\XX(t,x)$ is nonnegative and solves the ODE
\begin{equation}\label{ODEJ}
\left\{
\begin{array}{l}
 \dot J(t)=J(t)\,{\rm div\,}\bb(t,\XX(t,x)),\\
J(0)=1.
\end{array}
\right.
\end{equation}
Now, fix an open set $A'\Subset A$, and observe that \eqref{eq:pasquetta5} is equivalent to prove that for every $t\in [0,T]$
$$
\int_{A' \cap \{x: \hit{A'}{\sXX(\cdot,x)}>t\}} \varphi(\XX(t,x)) \, dx \leq 
e^{L(A',\sbb)} \int_{A'} \varphi(x) \, dx 
\qquad \mbox{for every } \varphi\in C_c(A').
$$
Fix $\varphi\in C_c(A')$ nonnegative and
notice that $\varphi(\XX(t,x))=0$ if $t\geq\hit{A'}{\XX(\cdot,x)}$, hence ${\rm supp\,}\varphi\circ\XX(t,\cdot)$ is a compact subset of the open
set $G_t:=\{x:\ \hit{A'}{\XX(\cdot,x)}>t\}$. By the change 
of variables formula
$$
\int_{\R^d}\varphi(\XX(t,x))\,{\rm det\,}\nabla_x\XX(t,x)\,dx
=
\int_{\R^d}\varphi(x)\,dx,
$$
in order to estimate from below the left-hand side it suffices to estimate from below ${\rm det\,}\nabla_x\XX(t,x)$
in $G_t$; using \eqref{ODEJ} and Gronwall's lemma, this estimate is provided by $e^{-L(A',\sbb)}$.
\end{Proof}

\begin{remark} \label{rem:auxiliary}{\rm
For the proof of Theorem~\ref{thm:stab-o} we record the following facts, proved but not stated in Proposition~\ref{prop:auxiliary}: if
$\eeta$ is as in the statement of the proposition, then for $(e_0)_\eeta$-a.e. $x$ the hitting time $\hit{A}{\eta}$ is equal to a positive constant 
$T_A(x)$ for $\eeta_x$-a.e. $\eta$; furthermore, $(e_t)_\#\eeta_x$ is a Dirac mass for all $t\in [0,T_A(x)]$.}
\end{remark}

\begin{proof}[Proof of Theorem~\ref{thm:auxiliary}] By the first part of Proposition~\ref{prop:auxiliary}, it suffices to build 
a regular generalized flow $\eeta$ in $\overline{A}$ relative to $\cc=\chi_A\bb$ with compressibility constant $e^{L(A, \sbb)}/\Leb{d}(A)$ such that
$(e_0)_\#\eeta=\rho_0\Leb{d}$ with $\rho_0>0$ $\Leb{d}$-a.e. in $A$. By the second part of the proposition, 
we have  existence of $\eeta$ with $(e_0)_\#\eeta$ equal to the normalized Lebesgue measure $\Leb{d}_A$
and satisfying \eqref{eq:pasquetta5} whenever $\bb\in C^\infty([0,T]\times\overline{A};\R^d)$. 

Hence, to use this fact, extend $\bb$ with the 0 value to $\R\times\R^d$ and let $\bb_\eps$ be mollified vector fields. We have that
$L(A,\bb_\eps)$ are uniformly bounded (because $A\Subset\O$) and, in addition, the properties of convolution
immediately yield
\begin{equation}\label{eq:pasquetta6}
\limsup_{\eps\downarrow 0}L(A',\bb_\eps)\leq L(A,\bb)
\qquad\text{for any $A'\Subset A$ open.}
\end{equation}
If $\eeta_\eps$ are regular generalized flows associated to $\cc_\eps=\chi_A\bb_\eps$, we can apply
Theorem~\ref{thm:tightness-o} to get that any limit point $\eeta$ is a regular generalized flow associated to $\cc$ and it
satisfies $(e_0)_\#\eeta=\Leb{d}_A$. In addition, given $A'\Subset A$ open we have
$$
((e_t)_\#\eeta_\eps\res \{\hit{A'}{\cdot}>t\}) \res A'\leq  \frac{e^{L(A',\sbb_\eps)}}{\Leb{d}(A)}\Leb{d}\qquad\forall \,t\in [0,T],
$$
thus \eqref{eq:pasquetta4} and \eqref{eq:pasquetta6} yield
$$
((e_t)_\#\eeta\res \{\hit{A'}{\cdot}>t\})\res A'\leq \frac{e^{L(A,\sbb)}}{\Leb{d}(A)}\Leb{d}\qquad\forall \,t\in [0,T].
$$
Letting $A'\uparrow A$ gives that $e^{L(A,\sbb)}/\Leb{d}(A)$ is a compressibility constant
for $\eeta$.
\end{proof}

Using a guing procedure in space, we can now build the maximal regular flow in $\Omega$ using the flows
provided by Theorem~\ref{thm:auxiliary}
in domains $\Omega_n\Subset\Omega_{n+1}$ with $\Omega_n\uparrow\Omega$.

\begin{theorem}\label{thm:main-o} Let $\bb: (0,T) \times \O \to \R^d$ be a Borel vector field which satisfies (a-$\O$) and (\bbb-$\O$).
Then the maximal regular flow is unique, and existence is ensured under the additional assumption \eqref{eqn:incompb-o-L}.
In addition, 
\begin{itemize}
\item[(a)]  for any $\O'\Subset\O$ the compressibility constant $C(\O',\XX)$ in Definition~\ref{def:maxflow-o} can be taken to be $e^{L(\O',\sbb)}$, 
where $L(\O',\bb)$ is the constant in \eqref{eqn:incompb-o-L};
\item[(b)] if  $\YY$ is a regular flow in $B$ up to $\tau$ with values in $\O$, then $T_{\O,\sXX}>\tau$ $\Leb{d}$-a.e. in $B$ and
\begin{equation}\label{coincidence1-o}
\XX(\cdot,x)=\YY(\cdot,x)\quad\text{in $[0,\tau]$, for $\Leb{d}$-a.e. $x\in B$}.
\end{equation}
\end{itemize}
\end{theorem}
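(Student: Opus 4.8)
The plan is to prove Theorem~\ref{thm:main-o} in three stages: uniqueness, then existence via a spatial gluing procedure, and finally the two additional properties (a) and (b), the last of which is essentially a byproduct of the construction together with the consistency lemma.

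\textbf{Uniqueness.} Suppose $\XX^1$ and $\XX^2$ are two maximal regular flows, with maximal times $T^1_{\sox}(x)$ and $T^2_{\sox}(x)$. Fix $\O'\Subset\O$ and $\tau<T$. By Remark~\ref{rem:induce_local-o}, on the Borel set $B_\tau^i := \{x \in \O':\ T^i_{\O'}(x)>\tau\}$ the flow $\XX^i$ restricts to a local regular flow up to time $\tau$ with values in $\O'$ (or in a slightly larger compact set, using the hitting-time structure). On $B_\tau^1 \cap B_\tau^2$ the Consistency Lemma~\ref{lem:local_coincidence} gives $\XX^1(\cdot,x)\equiv\XX^2(\cdot,x)$ on $[0,\tau]$ for $\Leb{d}$-a.e.\ $x$. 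The remaining point is to show that the maximal times agree: if $T^1_{\sox}(x)<T^2_{\sox}(x)$ on a set of positive measure, then on that set $\XX^1$ and $\XX^2$ coincide up to $T^1_{\sox}(x)$, but $\XX^2(\cdot,x)$ stays (locally) inside $\O$ up to $T^1_{\sox}(x)$ with $V_\O(\XX^2(t,x))$ bounded near $T^1_{\sox}(x)$ — because $T^2_{\sox}(x)$ is strictly larger and $\XX^2(\cdot,x)\in AC_{\rm loc}$ — contradicting property (iii) of Definition~\ref{def:maxflow-o} for $\XX^1$. Symmetrically one excludes $T^2_{\sox}<T^1_{\sox}$, so $T^1_{\sox}=T^2_{\sox}$ $\Leb{d}$-a.e.\ and the flows coincide on their common domain.

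\textbf{Existence and property (a).} Choose open sets $\O_n\Subset\O_{n+1}$ with $\O_n\uparrow\O$, and to each apply Theorem~\ref{thm:auxiliary} to obtain $T_{\O_n}:\O_n\to(0,T]$ and a flow $\XX_n$ defined on $\{(t,x):\ x\in\O_n,\ t\le T_{\O_n}(x)\}$, satisfying (a)--(c) there; in particular $\XX_n(t,\cdot)_\#(\Leb{d}\res\{T_{\O_n}>t\})\le e^{L(\O_n,\sbb)}\Leb{d}\res\O_n$. By Lemma~\ref{lem:local_coincidence} applied to the local regular flows induced by $\XX_n$ and $\XX_{n+1}$ (restricting to the set where trajectories stay in a fixed $\O_m\Subset\O_n$), the flows are consistent on the overlap of their domains, and moreover $T_{\O_n}(x)\le T_{\O_{n+1}}(x)$ for $\Leb{d}$-a.e.\ $x\in\O_n$, since a trajectory exiting $\O_n$ need not exit the larger $\O_{n+1}$. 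Hence define $T_{\sox}(x):=\lim_n T_{\O_n}(x)=\sup_n T_{\O_n}(x)$ and let $\XX(t,x)$ be the common value of $\XX_n(t,x)$ for $n$ large (for $t<T_{\sox}(x)$ one has $t<T_{\O_n}(x)$ for some $n$, and the trajectory up to time $t$ stays in a compact subset of $\O$ by continuity, so all large $n$ agree). This $\XX$ is Borel, satisfies (i) of Definition~\ref{def:maxflow-o}, and for any $\O'\Subset\O$ we have $\O'\subset\O_n$ for large $n$; monotone convergence in the bound from Theorem~\ref{thm:auxiliary}(c) combined with $L(\O',\sbb)\le L(\O_n,\sbb)$ gives \eqref{eqn:incompr-o} with constant $e^{L(\O',\sbb)}$, which is precisely (a). For property (iii): if $T_{\sox}(x)<T$ then $x$ leaves every $\O_n$ before $T_{\sox}(x)$; if $\limsup_{t\uparrow T_{\sox}(x)}V_\O(\XX(t,x))<\infty$, then the trajectory would remain in some fixed $\O_m\Subset\O$ on $[0,T_{\sox}(x))$, so $T_{\O_m}(x)=T_{\sox}(x)$ would force $\XX(T_{\O_m}(x),x)\in\partial\O_m$, yet the trajectory is bounded away from $\partial\O_m$ — contradiction. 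This has to be run on the full-measure set where all the a.e.\ statements hold simultaneously.

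\textbf{Property (b).} Given a regular flow $\YY$ in $B$ up to $\tau$ with values in $\O$, its image is a compact subset of $\O$, hence contained in some $\O_n$; applying Lemma~\ref{lem:local_coincidence} to $\YY$ and the local regular flow induced by $\XX$ (on the subset of $B$ where $T_{\sox}>\tau$) gives $\XX(\cdot,x)=\YY(\cdot,x)$ on $[0,\tau]$ there. The point is to show $T_{\sox}>\tau$ $\Leb{d}$-a.e.\ on $B$: since $\YY$ solves the ODE in $\O_n$ up to $\tau$ with bounded compression, by Theorem~\ref{thm:auxiliary} applied in $\O_n$ (uniqueness of its flow, again via Lemma~\ref{lem:local_coincidence} / Theorem~\ref{thm:nosplitting}) one gets $\YY(\cdot,x)=\XX_n(\cdot,x)$ on $[0,\tau]$, forcing $T_{\O_n}(x)>\tau$, hence $T_{\sox}(x)\ge T_{\O_n}(x)>\tau$.

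\textbf{Main obstacle.} The delicate point is not any single estimate but the careful bookkeeping of the null sets and of the monotonicity $T_{\O_n}\le T_{\O_{n+1}}$: one must verify that the pasted trajectory is genuinely absolutely continuous and solves the ODE on the whole maximal interval (not just on each $[0,T_{\O_n}]$), and that the blow-up alternative \eqref{eq:blowuptimeeta} — which is a property of \emph{continuous} curves on a half-open interval — transfers correctly to the limit time $T_{\sox}(x)=\sup_n T_{\O_n}(x)$. This is where the continuity of $V_\O$, the compactness of the $\overline{\O}_n$, and the exhaustion property \eqref{def:VOm} all get used together.
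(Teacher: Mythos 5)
Your overall architecture coincides with the paper's: uniqueness via Lemma~\ref{lem:local_coincidence} and Remark~\ref{rem:induce_local-o} together with the blow-up property (iii) of Definition~\ref{def:maxflow-o}; existence by gluing the flows of Theorem~\ref{thm:auxiliary} on an exhaustion $\O_n\uparrow\O$ after establishing the monotonicity $T_{\O_n}\leq T_{\O_{n+1}}$ and the consistency of the $\XX_n$; and property (b) again from the consistency lemma, with the observation that $T_{\O_n}(x)>\tau$ because otherwise $\XX_n(\cdot,x)$ would reach $\partial\O_n$ while coinciding with a curve $\YY(\cdot,x)$ that stays compactly inside. All of this matches the paper and is correct.

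The one step that fails is your derivation of property (a), i.e.\ of the \emph{sharp} compressibility constant $e^{L(\O',\sbb)}$ for an arbitrary $\O'\Subset\O$. You take $n$ with $\O'\subset\O_n$ and invoke Theorem~\ref{thm:auxiliary}(c) for $\O_n$ ``combined with $L(\O',\bb)\leq L(\O_n,\bb)$''. But that inequality points the wrong way: what you inherit is
\[
\XX(t,\cdot)_\#(\Leb{d}\res\{T_{\O'}>t\})\;\leq\;\XX(t,\cdot)_\#(\Leb{d}\res\{T_{\O_n}>t\})\;\leq\; e^{L(\O_n,\sbb)}\Leb{d},
\]
and $e^{L(\O_n,\sbb)}\geq e^{L(\O',\sbb)}$; restricting the right-hand side to $\O'$ does not shrink the constant. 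This is enough for Definition~\ref{def:maxflow-o}(ii) (which only asks for \emph{some} finite constant), hence for existence, but not for statement (a). The repair — which is exactly the paper's device — is to exploit the uniqueness you have already proved: the maximal regular flow does not depend on the chosen exhaustion, so one may rerun the construction with $\O'$ itself inserted as one of the sets $\O_n$; Theorem~\ref{thm:auxiliary}(c) applied directly to $A=\O'$ then yields the bound with constant $e^{L(\O',\sbb)}$ relative to the hitting time $T_{\O'}$. With this adjustment (and the small check, implicit in your write-up, that the hitting time $T_{\O_n}$ of the glued flow agrees a.e.\ with the exit time $T_n$ produced by Theorem~\ref{thm:auxiliary}), your proof is complete.
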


\begin{Proof}
 Let us prove first the uniqueness of the maximal regular flow in $\O$. Given regular maximal flows
$\XX^i$ in $\O$, $i=1,\,2$, by Lemma~\ref{lem:local_coincidence} and Remark~\ref{rem:induce_local-o} we easily obtain
$$
\XX^1(\cdot,x)=\XX^2(\cdot,x)\quad\text{in $[0,T_{\O, \sXX^1}(x)\wedge T_{\O,\sXX^2}(x))$, for $\Leb{d}$-a.e. $x\in\O$}.
$$
On the other hand, for $\Leb{d}$-a.e. $x \in \{T_{\O,\sXX^1}>T_{\O,\sXX^2}\}$, the image of $[0,T_{\O,\sXX^2}(x)]$ through $V_\O(\XX^1(\cdot,x))$ is bounded in $\R$, whereas the image of $[0,T_{\O,\sXX^2}(x))$ through $V_\O(\XX^2(\cdot,x))$ is not.
It follows that the set
$\{T_{\sXX^1}>T_{\sXX^2}\}$ is $\Leb{d}$-negligible. Reversing the roles of $\XX^1$ and $\XX^2$ we obtain that
$T_{\O,\sXX^1}=T_{\O,\sXX^2}$ $\Leb{d}$-a.e. in $\O$.

In order to show existence we are going to use auxiliary flows $\XX_n$ in $\O_n$ with hitting times $T_n:\O_n\to (0,T]$, i.e.,
\begin{itemize}
\item[(1)] for $\Leb{d}$-a.e. $x\in\O_n$, $\XX_n(\cdot,x)\in AC([0,T_n(x)];\R^d)$, $\XX_n(0,x)=x$, $\XX_n(t,x)\in\O_n$ for all
$t\in [0,T_n(x))$, and $\XX_n(T_n(x),x)\in\partial\O_n$ when $T_n(x)<T$, so that $\hit{\O_n}{\XX_n(\cdot,x)}=T_n(x)$;
\item[(2)] for $\Leb{d}$-a.e. $x\in\O_n$, $\XX_n(\cdot,x)$ solves the ODE $\dot\gamma=\bb(t,\gamma)$ in $(0,T_n(x))$;
\item[(3)] $\XX_n(t,\cdot)_\#(\Leb{d}\res\{T_n>t\})\leq e^{L(\O_n,\sbb)}\Leb{d}\res\O_n$ for all $t\in [0,T]$, where $L(\O_n,\bb)$ is
given as in \eqref{eqn:incompb-o-L}.
\end{itemize}
The existence of $\XX_n$, $T_n$ as in (1), (2), (3) has been achieved in Theorem~\ref{thm:auxiliary}.  

If $n\leq m$, the uniqueness argument outlined at the beginning of this proof gives immediately that $T_n(x)\leq T_m(x)$, and that $\XX_n(\cdot,x)\equiv \XX_m(\cdot,x)$ in $[0,T_n(x)]$ for $\Leb{d}$-a.e. $x\in\O_n$. Hence the limits
\begin{equation}
\label{eqn:times-limit-o}
T_{\sox}(x) := \lim_{n\to \infty} T_n(x), \qquad
\XX(t,x) =  \lim_{n\to \infty} \XX_n(t,x) \quad t\in [0,T_{\sox}(x))
\end{equation}
are well defined for $\Leb{d}$-a.e. $x\in\O$. By construction
\begin{equation}
\label{eqn:X-Xn}
\XX(\cdot,x) = \XX_n(\cdot,x) \qquad \text{in $[0,T_n(x))$, for $\Leb{d}$-a.e. $x\in\O_n$}.
\end{equation}

We now check that $\XX$ and $T_{\sox}$ satisfy the conditions (i), (ii), (iii) of Definition~\ref{def:maxflow-o}.
Property (i) is a direct consequence of property (2) of $\XX_n$, \eqref{eqn:times-limit-o}, and \eqref{eqn:X-Xn}. 

In connection with property (ii) of Definition~\ref{def:maxflow-o}, in the more specific form stated in (a)
for any open set $\O'\Subset\O$, it suffices to check it for all open sets $\O_n$: indeed, it is clear that 
in the uniqueness proof we need it only for a family of sets that invade $\O$ and, as soon as uniqueness
is estabilished, we can always assume in our construction that $\O'$ is one of the sets $\O_n$. Now,
given $n$, we first remark that property (1) of $\XX_n$ yields $T_n(x)=\hit{\O_n}{\XX(\cdot,x)}$ for $\Leb{d}$-a.e.
$x\in\O_n$; moreover
\eqref{eqn:X-Xn} gives
$$\XX(t,\cdot)_\# (\Leb{d}\res \{T_n>t\}) = \XX_n(t,\cdot)_\#(\Leb{d} \res \{T_n>t\})$$
for all $t\in [0,T]$. Hence, we can now use property (3) of $\XX_n$ to get
\begin{equation}
\label{eqn:incompr-in-o}
\XX(t,\cdot)_\#(\Leb{d}\res \{T_n>t\})\leq e^{L(\O_n,\sbb)}\Leb{d}\res\O_n\qquad \text{for every }t\in [0,T],
\end{equation}
which together with the identity $T_n(x)=\hit{\O_n}{\XX(\cdot,x)}$ for $\Leb{d}$-a.e.
$x\in\O_n$ concludes the verification of Definition~\ref{def:maxflow-o}(ii).

Now we check Definition~\ref{def:maxflow-o}(iii): we obtain that $\limsup V_\O(\XX(t,x))=\infty$
 as $t\uparrow T_{\sox}(x)$ for $\Leb{d}$-a.e. $x\in\O$ such that $T_{\sox}(x)<T$
from the fact that $\XX(t,T_n(x))\in\partial\Omega_n$, and the sets $\O_n$ contain eventually any set $K\Subset\O$.
This completes the existence proof and the verification of the more specific property (a).

The proof of property (b) in the statement of the theorem follows at once from Lemma~\ref{lem:local_coincidence} and Remark~\ref{rem:induce_local-o}.
\end{Proof}

\section{Main properties of the maximal regular flow}

\subsection{Semigroup property}
In order to discuss the semigroup property, we double the time variable and denote by
$$
\XX(t,s,x),\qquad t\geq s,
$$
the maximal flow with $s$ as initial time, so that $\XX(t,0,x)=\XX(t,x)$ and $\XX(s,s,x)=x$.
The maximal time of $\XX(\cdot,s,x)$ will be denoted by $T_{\O, \sXX ,s}(x)$.

The proof of the semigroup property and of the identity
$T_{\O,\sXX,s}(\XX(s,x))=T_{\O,\sXX}(x)-s$ satisfied by
the maximal existence time follows the classical scheme. It is however a bit more involved than usual
because we are assuming only one-sided bounds on the divergence of $\bb$, therefore the inverse of the map $\XX(s,\cdot)$
(which corresponds to a flow with reversed time) is a priori not defined. For this reason, using disintegrations, we
define in the proof a kind of multi-valued inverse of $\XX(s,\cdot)$.

\begin{theorem}[Semigroup property]\label{thm:semigr}
Under assumptions (a-$\O$), (\bbb-$\O$), and \eqref{eqn:incompb-o-L} on $\bb$, for all $s \in [ 0, T]$ the maximal regular flow $\XX$ satisfies
\begin{equation}\label{eqn:consecutive-o}
T_{\O,\sXX,s}(\XX(s,x)) = T_{\O, \sXX}(x)-s
\qquad\text{for $\Leb{d}$-a.e. $x\in\{T_{\O,\sXX}>s\}$,}
\end{equation}
\begin{equation}\label{eqn:semig3-o-int}
\XX\bigl(\cdot ,s,\XX(s,x)\bigr)=\XX(\cdot+s,x) \text{ in $[0,T_{\O, \sXX}(x)-s)$,  }
\qquad\text{for $\Leb{d}$-a.e. $x\in\{T_{ \O, \sXX}>s\}$.}
\end{equation}
\end{theorem}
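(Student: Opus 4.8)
The plan is to pass to the time‑shifted vector field and reduce everything to the coincidence of two \emph{local regular flows} via Theorem~\ref{thm:main-o}(b). Set $\tilde\bb(t,\cdot):=\bb(s+t,\cdot)$ on $(0,T-s)\times\O$; it inherits (a-$\O$), (\bbb-$\O$) and \eqref{eqn:incompb-o-L}, so by Theorem~\ref{thm:main-o} it has a unique maximal regular flow, which — by the very definition of the maximal regular flow with initial time $s$ and by uniqueness — coincides, up to the reparametrisation $t\mapsto t+s$, with $\XX(\cdot,s,\cdot)$; I will write $\tilde\XX$ for it and $\tilde T$ for its maximal time, so that $T_{\O,\sXX,s}=\tilde T$ and \eqref{eqn:consecutive-o}--\eqref{eqn:semig3-o-int} amount to $\tilde T(\XX(s,x))=T_{\sox}(x)-s$ and $\tilde\XX(t,\XX(s,x))=\XX(t+s,x)$ for $t\in[0,T_{\sox}(x)-s)$, for $\Leb{d}$-a.e. $x\in\{T_{\sox}>s\}$. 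I fix $\tau\in(0,T-s)$ and a Borel set $B$ of finite measure, $B\Subset\{T_{\sox}>s+\tau\}$, whose $\XX$-trajectories stay, up to time $s+\tau$, in a fixed open set $\O_0\Subset\O$; such sets exhaust $\{T_{\sox}>s+\tau\}$ up to an $\Leb{d}$-negligible set. The device that makes the argument work is that, by the local bounded compression \eqref{eqn:incompr-o} of $\XX$, the measure $\nu:=\XX(s,\cdot)_\#(\Leb{d}\res B)$ satisfies $\nu\le C\,\Leb{d}$: this lets one transfer $\Leb{d}$-a.e.\ properties of $\tilde\XX$ to $\nu$-a.e.\ $y$, hence to $\Leb{d}$-a.e.\ $x\in B$, even though $\XX(s,\cdot)$ need not be invertible.

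First I would prove $T_{\sox}(x)\ge s+\tilde T(\XX(s,x))$ together with the flow identity on $[0,\tilde T(\XX(s,x)))$ by a gluing argument. For $\tau'\in(s,s+\tau)$ put $B_{\tau'}:=\{x\in B:\ \tilde T(\XX(s,x))>\tau'-s\}$ and define $\WW(\cdot,x)$ on $[0,\tau']$ by $\WW(\cdot,x):=\XX(\cdot,x)$ on $[0,s]$ and $\WW(t,x):=\tilde\XX(t-s,\XX(s,x))$ on $[s,\tau']$; the two pieces match at $t=s$ and, since $\tilde\bb(t-s,\cdot)=\bb(t,\cdot)$, the concatenation is absolutely continuous on $[0,\tau']$, solves $\dot\gamma=\bb(t,\gamma)$, and starts at $x$. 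To see that $\WW$ is a local regular flow of $\bb$ in the sense of Definition~\ref{def:regflow} with values in $\O$ one checks bounded compression: for $t\le s$ this is \eqref{eqn:incompr-o} for $\XX$, and for $t\in[s,\tau']$ one writes $\WW(t,\cdot)_\#(\Leb{d}\res B_{\tau'})=\tilde\XX(t-s,\cdot)_\#\nu_{\tau'}$ with $\nu_{\tau'}:=\XX(s,\cdot)_\#(\Leb{d}\res B_{\tau'})\le C\Leb{d}$ concentrated on $\{\tilde T>\tau'-s\}$, and applies \eqref{eqn:incompr-o} for $\tilde\XX$ after a countable decomposition of $B_{\tau'}$ according to which open set $A_j'\Subset\O$ the $\tilde\XX$-trajectory $\tilde\XX(\cdot,\XX(s,x))$ remains in on $[0,\tau'-s]$ (this decomposition is needed because the compression bound for $\tilde\XX$ is only local in $\O$). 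Theorem~\ref{thm:main-o}(b) then gives $T_{\sox}>\tau'$ and $\XX(\cdot,x)=\WW(\cdot,x)$ on $[0,\tau']$ for $\Leb{d}$-a.e.\ $x\in B_{\tau'}$; letting $\tau'\uparrow s+\tau$, then exhausting $B$ and letting $\tau\uparrow T-s$, one obtains for $\Leb{d}$-a.e.\ $x\in\{T_{\sox}>s\}$ that $T_{\sox}(x)\ge s+\tilde T(\XX(s,x))$ and $\XX(t+s,x)=\tilde\XX(t,\XX(s,x))$ for all $t\in[0,\tilde T(\XX(s,x)))$.

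For the opposite inequality I would argue by contradiction using the blow‑up property. If the set $\{x\in\{T_{\sox}>s\}:\ \tilde T(\XX(s,x))<T_{\sox}(x)-s\}$ had positive $\Leb{d}$-measure, then on a positive-measure subset one has $\tilde T(\XX(s,x))<T-s$, so property (iii) of Definition~\ref{def:maxflow-o} for $\tilde\XX$ — which holds for $\Leb{d}$-a.e.\ $y$, hence for $\nu$-a.e.\ $y$, hence for $\Leb{d}$-a.e.\ $x\in B$ — yields $\limsup_{t\uparrow\tilde T(\XX(s,x))}V_\O(\tilde\XX(t,\XX(s,x)))=\infty$; by the flow identity just proved this is the same as $\limsup_{r\uparrow s+\tilde T(\XX(s,x))}V_\O(\XX(r,x))=\infty$. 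But $s+\tilde T(\XX(s,x))<T_{\sox}(x)$ and $r\mapsto V_\O(\XX(r,x))$ is bounded on every compact subinterval of $[0,T_{\sox}(x))$, because the $\XX$-trajectory lies in a compact subset of $\O$ before its maximal time — a contradiction. Hence $\tilde T(\XX(s,x))=T_{\sox}(x)-s$ for $\Leb{d}$-a.e.\ $x\in\{T_{\sox}>s\}$, and the flow identity of the previous step is then valid on all of $[0,T_{\sox}(x)-s)$; translating back through $\XX(\cdot,s,\cdot)=\tilde\XX$ and $T_{\O,\sXX,s}=\tilde T$ gives \eqref{eqn:consecutive-o} and \eqref{eqn:semig3-o-int}.

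The main obstacle is precisely the one flagged before the statement: since only a one‑sided bound on $\div\bb$ is assumed, $\XX(s,\cdot)$ need not be invertible, so there is no genuine time‑reversed flow and $\Leb{d}$-a.e.\ statements about $\tilde\XX$ cannot be pulled back pointwise along $\XX(s,\cdot)$. Both the gluing step and the contradiction step get around this by using the absolutely continuous measure $\nu=\XX(s,\cdot)_\#(\Leb{d}\res B)\le C\Leb{d}$ in place of a pointwise preimage; a more ``symmetric'' alternative would be to disintegrate the law of $\XX$ on path space over $e_s$ and manufacture the required single‑valued restarted flow via Theorem~\ref{thm:nosplitting}, as suggested by the ``multi‑valued inverse'' comment, but the route above is shorter because it borrows the blow‑up property for the reverse inequality. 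A secondary, purely technical point throughout is the countable‑exhaustion bookkeeping needed to upgrade the \emph{local} compression bounds of $\XX$ and $\tilde\XX$ to the \emph{global} compression bound demanded by Theorem~\ref{thm:main-o}(b).
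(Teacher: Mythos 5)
Your overall architecture is sound and, for one half of the argument, genuinely different from the paper's: where the paper builds a ``multi-valued inverse'' of $\XX(s,\cdot)$ by disintegrating $(Id\times\XX(s,\cdot))_\#\Leb{d}_s$ and then invokes Theorem~\ref{thm:nosplitting} to identify the restarted trajectories, you instead concatenate $\XX$ on $[0,s]$ with the restarted flow on $[s,\tau']$ and feed the concatenation into Theorem~\ref{thm:main-o}(b). Both devices serve the same purpose (circumventing the non-invertibility of $\XX(s,\cdot)$ via the bound $\XX(s,\cdot)_\#(\Leb{d}\res B)\le C\Leb{d}$), and your step (B) is the mirror image of the paper's closing blow-up argument. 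The gluing/compression bookkeeping in your first step is correct, including the countable decomposition needed to upgrade the local compression bounds.

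There is, however, one genuine gap: your first step does \emph{not} prove the inequality $T_{\sox}(x)\ge s+\tilde T(\XX(s,x))$ that you claim, and this inequality is half of \eqref{eqn:consecutive-o}. The reason is the range of your parameters: you impose $B\Subset\{T_{\sox}>s+\tau\}$ and $\tau'\in(s,s+\tau)$, so on $B$ you already know $T_{\sox}>s+\tau>\tau'$, and the output ``$T_{\sox}>\tau'$'' of Theorem~\ref{thm:main-o}(b) is vacuous. Consequently, letting $\tau'\uparrow s+\tau$, exhausting $B$, and varying $\tau$ only yields the flow identity on $[0,\min\{\tilde T(\XX(s,x)),\,T_{\sox}(x)-s\})$ and gives no control on $x$ with $\tilde T(\XX(s,x))>T_{\sox}(x)-s$; ruling out such $x$ is precisely the point of that inequality, so the argument is circular as written. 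Two repairs are available. (1) Decouple the parameters: require only $B\Subset\{T_{\sox}>s\}$ with $\XX([0,s]\times B)\subset\O_0\Subset\O$, and let $\tau'$ range over all of $(s,T)$; the concatenation $\WW$ on $[0,\tau']$ is still well defined on $B_{\tau'}=\{x\in B:\ \tilde T(\XX(s,x))>\tau'-s\}$ and still satisfies the compression bounds (the first piece uses $\hit{\O_0}{\XX(\cdot,x)}>s$, the second your countable decomposition), so Theorem~\ref{thm:main-o}(b) now genuinely extends $T_{\sox}$ past $\tau'$. (2) Alternatively, keep your weak version of step (A), run step (B) as you do (it only needs the identity up to $\min\{\tilde T,\,T_{\sox}-s\}$), and then obtain $\tilde T(\XX(s,x))\le T_{\sox}(x)-s$ by the symmetric blow-up argument — the paper's route: when $T_{\sox}(x)<T$, property (iii) for $\XX$ together with the identity forces $\limsup_{t\uparrow T_{\sox}(x)-s}V_\O(\tilde\XX(t,\XX(s,x)))=\infty$, which is incompatible with $\tilde T(\XX(s,x))>T_{\sox}(x)-s$ since $V_\O\circ\tilde\XX(\cdot,y)$ is locally bounded before $\tilde T(y)$; the case $T_{\sox}(x)=T$ is trivial. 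Either repair completes the proof.
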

\begin{Proof} Let us fix $s\geq 0$ and assume without loss of generality that $\Leb{d}(\{T_{\O,\sXX}>s\})>0$.
Let us fix a Borel $B_s\subset\{T_{\O,\sXX}>s\}$ with positive and finite measure,
and let $\Leb{d}_s $ denote the renormalized Lebesgue measure on $B_s$, namely $\Leb{d}_s := \Leb{d} \res B_s / \Leb{d}(B_s)$. We denote by 
$\rho_s$ the bounded density of the probability measure $\XX(s,\cdot)_\#\Leb{d}_s$ with respect to $\Leb{d}$.
We can disintegrate the probability measure $\pi:=(Id\times\XX(s,\cdot))_\#\Leb{d}_s$ with respect to $\rho_s$, getting a family $\{\pi_y\}$ of probability measures
in $\R^d$ such that $\pi=\int \pi_y\otimes\delta_y\, \rho_s(y)\,dy$. Notice that in the case when $\XX(s,\cdot)$ is (essentially) injective,
$\pi_y$ is the Dirac mass at $(\XX(s,\cdot))^{-1}(y)$ for $\XX(s,\cdot)_\#\Leb{d}_s $-a.e. $y$. 

For $\eps>0$, let us set
$$
\pi_\eps:=\int_{\{\rho_s\geq\eps\}} \pi_y\otimes\delta_y \,dy \in \Probabilities{\R^{2d}}
$$
Since $\eps\pi_\eps\leq \pi$,  the first marginal $\tilde \rho_\eps$ of $\pi_\eps$ is bounded from above by $\Leb{d}_s /\eps$,
therefore it has a bounded density $\tilde\rho_\eps$ with respect to $\Leb{d}$. Moreover, since
$\pi\leq\|\rho_s\|_{L^\infty(\R^d)}\sup_{\eps>0}\pi_\eps$ and the first marginal of $\pi$ is $\Leb{d}_s$, we obtain
\begin{equation}\label{eqn:nohhole}
\sup_{\eps>0}\tilde\rho_\eps(x)>0\qquad\text{for $\Leb{d}$-a.e. $x\in B_s$.}
\end{equation}

Now, for $\tau>s$ and $\eps>0$ fixed, let $B^\tau_s:=\{T_{\O,\sXX}>\tau\}$ and define a generalized flow 
$\eeta_{\tau,\eps}\in\Probabilities{C([s,\tau];\R^d)}$ by
\begin{equation}\label{eqn:semig1}
\eeta_{\tau,\eps}:=\int_{(x,y)\in B^\tau_s\times\{\rho_s\geq\eps\}}\delta_{\sXX(\cdot,x)}\,d\pi_y(x)\,dy=
\int_{B^\tau_s}\delta_{\sXX(\cdot,x)}\,\tilde\rho_\eps(x)\,dx.
\end{equation}
For any $r\in [s,\tau]$ and any $\phi\in C_b(\R^d)$ nonnegative there holds
$$
\int_{\R^d}\phi\,d[(e_r)_\#\eeta_{\tau,\eps}]=
\int_{B^\tau_s}\phi(\XX(r,x))\tilde\rho_\eps(x)\,dx\leq
L\|\tilde\rho_\eps\|_\infty\int_{\R^d}\phi(z)\,dz.
$$
Evaluating at $r=s$, a similar computation gives
$$
(e_s)_\#\eeta_{\tau,\eps}=\XX(s,\cdot)_\#(\chi_{B^\tau_s}\tilde\rho_\eps).
$$
By Theorem~\ref{thm:nosplitting} (applied in the time interval $[s,\tau]$ instead of $[0,T]$)
it follows that 
\begin{equation}\label{eqn:semig22}
\eeta_{\tau,\eps}=\int\delta_{\eta_z}\,d [(e_s)_\#\eeta_{\tau,\eps}](z).
\end{equation}
Now, it is clear that $\WW(\cdot,z):=\eta_z(\cdot)$ is a regular flow in $[s,\tau]$, hence (by uniqueness)
$\eta_z=\XX(\cdot,s,z)$ for $(e_s)_\#\eeta_{\tau,\eps}$-a.e. $z$. Returning to \eqref{eqn:semig22} we get
\begin{equation}\label{eqn:semig2}
\eeta_{\tau,\eps}=\int\delta_{\sXX(\cdot,s,z)}\,d [(e_s)_\#\eeta_{\tau,\eps}](z)=
\int_{B^\tau_s}\delta_{\sXX(\cdot,s,\sXX(s,x))}\tilde\rho_\eps(x)\,dx,
\end{equation}
where in the second equality we used the formula for $(e_s)_\#\eeta_{\tau,\eps}$.
Comparing formulas \eqref{eqn:semig1} and \eqref{eqn:semig2}, and taking
\eqref{eqn:nohhole} into account, we find
that $T_{\O,\sXX,s}(\XX(s,x))\geq \tau-s$ and that
$\XX\bigl(\cdot,s,\XX(s,x)\bigr)\equiv \XX(\cdot+s,x)$ in $[s,\tau]$, for $\Leb{d}$-a.e. $x\in B^\tau_s$. Since $\tau>s$ is
arbitrary, it follows that $T_{\O,\sXX,s}(\XX(s,x))\geq T_{\O, \sXX}(x)-s$ and that
$\XX\bigl(t,s,\XX(s,x)\bigr)=\XX(t+s,x)$ $\Leb{d}$-a.e. in $B_s$.

If $T_{\O,\sXX}(x)<T$, by the semigroup identity it follows that
$$\limsup\limits_{t\uparrow T_{\O,\ssXX}(x)-s}V_\O(\XX\bigl(t,s,\XX(s,x)\bigr))=\limsup\limits_{t\uparrow T_{\O,\ssXX}(x)-s}V_\O(\XX(t+s,x)) =\infty,$$
and hence
\begin{equation}
\label{eqn:tsox-ineq} T_{\O,\sXX,s}(\XX(s,x)) = T_{\O, \sXX}(x)-s
\qquad\text{for $\Leb{d}$-a.e. $x\in B_s$.}
\end{equation} 

Eventually we use the arbitrariness of $B_s$ to conclude \eqref{eqn:consecutive-o} and \eqref{eqn:semig3-o-int}.
\end{Proof}

\subsection{Stability}

The following theorem provides a stability result for maximal regular flows in $\O$ when the vector fields converge strongly in space and weakly in time, in 
analogy with the classical theory (see also Remark~\ref{rem:weakspacetime} below).

\begin{theorem}[Stability of maximal regular flows in $\O$]\label{thm:stab-o}
Let $\O\subset\R^d$ be an open set. 
Let $\XX^n$ be maximal regular flows in $\O$ relative to locally integrable Borel vector fields 
$\bb^n:(0,T)\times\O\to\R^d$.
Assume that:
\begin{itemize}
\item[(a)] for any $A\Subset\O$ open the compressibility constants $C(A,\XX^n)$ in Definition~\ref{def:maxflow-o} are uniformly bounded;
\item[(b)] 
for any $A\Subset\O$ open, setting $A^\eps:= \{ x\in A: {\rm dist}(x,\R^d \setminus A) \geq \eps\}$ for $\eps>0$, there holds
\begin{equation}\label{eqn:conv-b-forte-deb}
\lim_{h\to 0}\chi_{A^{|h|}}(x+h)\bb^n(t,x+h)=\chi_{A}(x)\bb^n(t,x)\quad\text{in $L^1((0,T)\times A)$, uniformly w.r.t. $n$;}
\end{equation}
\item[(c)] there exists a Borel vector field $\bb:(0,T)\times\O\to\R^d$ satisfying (a-$\O$) and (\bbb-$\O$) such that
\begin{equation}
\label{hp:conv-b}
\bb^n \rightharpoonup \bb
\qquad \mbox{weakly in } L^1((0,T) \times A)\qquad\text{for all $A\Subset\O$ open.}
\end{equation}
\end{itemize}
Then there exists a unique maximal regular flow $\XX$ for $\bb$ and, for every $t\in [0,T]$ and any open set $A\Subset \O$, we have 
\begin{equation}
\label{eqn:stab-o}
\lim_{n\to \infty}\Bigl\| \max_{s\in [0,t]} |\XX^n_A(s,\cdot) - \XX(s,\cdot)|\wedge 1 \Bigr\|_{L^1(\{ x:\ \hit{A}{\sXX(\cdot,x)} >t \})} =0,
\end{equation}
where
$$
\XX^n_A(t,x):=
\left\{
\begin{array}{ll}
\XX^n(t,x) &
\text{for $t\in [0,\hit{A}{\XX^n(\cdot,x)}]$,}\\
\XX^n(\hit{A}{\XX^n(\cdot,x)),x} & \text{for $t\in [\hit{A}{\XX^n(\cdot,x)},T]$}.
\end{array}
\right.
$$
\end{theorem}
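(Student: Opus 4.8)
The plan is to reduce to a local statement on open sets $A\Subset\O$, encode the stopped trajectories of $\XX^n$ into probability measures on path space, extract a narrow limit, identify it as a regular generalized flow of $\chi_A\bb$ in $\overline A$, and finally upgrade narrow convergence to the $L^1$ convergence \eqref{eqn:stab-o}. Uniqueness of $\XX$ is immediate from Theorem~\ref{thm:main-o} (which needs only (a-$\O$) and (\bbb-$\O$), both valid for $\bb$ by (c)), so by a subsequence–and–contradiction argument combined with uniqueness it suffices to show that some subsequence of $(\XX^n)$ produces a maximal regular flow for $\bb$ which satisfies \eqref{eqn:stab-o} on each $A\Subset\O$. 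Fix such an $A$ with $\Leb{d}(\partial A)=0$ (harmless, since the exhausting sets can be chosen with smooth boundary) and set $\eeta^n:=(\XX^n_A(\cdot,\cdot))_\#\Leb{d}_A\in\Probabilities{C([0,T];\overline A)}$, with $\Leb{d}_A$ the normalized Lebesgue measure on $A$. Since $A\Subset\O$, Definition~\ref{def:maxflow-o}(iii) forces $\hit{A}{\XX^n(\cdot,x)}<T_{\O,\sXX^n}(x)$ whenever the hitting time is $<T$, so $\XX^n_A$ is defined on all of $[0,T]$ for a.e.\ $x$, and $\eeta^n$ is concentrated on integral curves of $\cc^n:=\chi_A\bb^n$ (following $\bb^n$ while in $A$, constant after hitting $\partial A$, where $\chi_A\bb^n$ vanishes). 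By Definition~\ref{def:maxflow-o}(ii) with $\O'=A$, the measures $(e_t)_\#\eeta^n\res A$ are bounded by $C(A,\XX^n)\Leb{d}(A)^{-1}\Leb{d}$, uniformly in $n$ by (a); thus the $\eeta^n$ are regular generalized flows of $\cc^n$ in $\overline A$ with uniformly bounded compressibility constants. Hypothesis (c) and Dunford-Pettis give a common integrability modulus $F$ for $\{\bb^n\}$ on $(0,T)\times A$, so exactly as in the proof of Theorem~\ref{thm:tightness-o} the energies $\int\Sigma\,d\eeta^n$ are uniformly bounded; Prokhorov then yields a subsequence — which, by a diagonal argument over an exhaustion $\O_k\uparrow\O$, may be taken independent of $A$ — along which $\eeta^n\rightharpoonup\eeta$ narrowly, with $\eeta$ concentrated on $AC([0,T];\overline A)$ and satisfying the compressibility bound on $A$ by narrow lower semicontinuity.

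The crucial and hardest step is to show that $\eeta$ is concentrated on integral curves of $\cc:=\chi_A\bb$; one cannot imitate Theorem~\ref{thm:tightness-o} here, because (b)+(c) give only \emph{strong in space, weak in time} convergence of $\bb^n$ (the $\bb^n$ may oscillate wildly in time), so the continuity equation cannot be passed to the limit by testing against $L^\infty$ functions. Fix $t$, approximate $\cc$ in $L^1((0,T)\times A)$ by $\cc'\in C_c((0,T)\times A;\R^d)$, and use narrow convergence together with $\eta(t)-\eta(0)=\int_0^t\cc^n(s,\eta(s))\,ds$ for $\eeta^n$-a.e.\ $\eta$ to get
$$\int\Big|\eta(t)-\eta(0)-\int_0^t\cc'(s,\eta(s))\,ds\Big|\,d\eeta=\lim_n\int\Big|\int_0^t(\cc^n-\cc')(s,\eta(s))\,ds\Big|\,d\eeta^n.$$
Introduce the spatial mollifications $\bb^n_\delta:=\bb^n\ast_x\rho_\delta$, $\bb_\delta:=\bb\ast_x\rho_\delta$ (well defined on $\overline A$ for $\delta$ small) and split $\cc^n-\cc'=(\cc^n-\bb^n_\delta)+(\bb^n_\delta-\bb_\delta)+(\bb_\delta-\cc')$. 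For the first term, $\int\!\int_0^t|\bb^n-\bb^n_\delta|(s,\eta(s))\,ds\,d\eeta^n\le C\|\bb^n-\bb^n_\delta\|_{L^1((0,T)\times A)}$, small uniformly in $n$ by (b); for the third, the integrand is continuous in $\eta$ (dominated convergence, since $\bb_\delta(s,\cdot)$ is continuous with $\int_0^T\|\bb_\delta(s,\cdot)\|_{L^\infty(\overline A)}ds<\infty$), so its $\eeta^n$-integral converges to the $\eeta$-integral, bounded by $C\|\bb_\delta-\cc'\|_{L^1((0,T)\times A)}$. For the middle, genuinely delicate term, note that by (c) and the Ascoli-Arzel\`a theorem — equi-Lipschitz in $x$ since $\sup_n\|\bb^n\|_{L^1((0,T)\times A)}<\infty$, equi-continuous in time by equi-integrability, pointwise convergence by testing weak $L^1$ convergence against $\chi_{(0,r)}(s)\rho_\delta(x-\cdot)\in L^\infty$ — the primitives $(r,x)\mapsto\int_0^r\bb^n_\delta(s,x)\,ds$ converge \emph{uniformly} on $[0,T]\times\overline A$ to $\int_0^r\bb_\delta(s,x)\,ds$ as $n\to\infty$, for each fixed $\delta$. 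Partitioning $[0,t]$ into $N$ subintervals of mesh $h$ and freezing $\eta(s)$ at the left endpoint on each, the ``frozen'' contribution is $\le N\cdot2\sup_{[0,T]\times\overline A}|\!\int_0^r(\bb^n_\delta-\bb_\delta)|\to0$ as $n\to\infty$, while the oscillation error is bounded, after integrating against $\eeta^n$ and using $|\eta(s)-\eta(t_i)|\le\int_{t_i}^{t_{i+1}}|\dot\eta|$, by $\|\nabla\rho_\delta\|_\infty\big(\max_i\int_{t_i}^{t_{i+1}}(\|\bb^n(s,\cdot)\|_{L^1}+\|\bb(s,\cdot)\|_{L^1})\,ds\big)\cdot\int\!\int_0^t|\dot\eta|\,d\eeta^n$, whose first factor is $\le\|\nabla\rho_\delta\|_\infty\,\omega(h)$ with $\omega$ a uniform (in $n$) equi-integrability modulus and whose second is $\le C\|\bb^n\|_{L^1((0,T)\times A)}$, hence small once $h$ is chosen small after $\delta$ is fixed. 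Sending $n\to\infty$, then $h\to0$, then $\delta\to0$, then $\cc'\to\cc$ in $L^1$, yields $\int|\eta(t)-\eta(0)-\int_0^t\cc(s,\eta(s))\,ds|\,d\eeta=0$; ranging over $t\in\Q\cap[0,T]$ and using continuity shows $\eeta$ is a regular generalized flow of $\cc=\chi_A\bb$ in $\overline A$.

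Since $(e_0)_\#\eeta=\Leb{d}_A$ has strictly positive density, Proposition~\ref{prop:auxiliary}(i) and Remark~\ref{rem:auxiliary} produce $\XX^A$ and $T_A$ with the properties of Theorem~\ref{thm:auxiliary}(a)–(b) and $\XX^A(t,\cdot)_\#(\Leb{d}\res\{T_A>t\})\le C(A)\Leb{d}\res A$; moreover $\eeta_x=\delta_{\sXX^A(\cdot,x)}$ up to $T_A(x)$, and $T_A(x)=\hit{A}{\XX^A(\cdot,x)}$. Gluing the flows $\XX^{\O_k}$ along an exhaustion $\O_k\uparrow\O$ exactly as in the existence part of Theorem~\ref{thm:main-o} (consistency on overlaps being Lemma~\ref{lem:local_coincidence}) yields a maximal regular flow $\XX$ for $\bb$, which, by Theorem~\ref{thm:main-o}, is the unique one.

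Finally, for \eqref{eqn:stab-o} argue by contradiction: along a subsequence with $\big\|\max_{s\le t}|\XX^n_A(s,\cdot)-\XX(s,\cdot)|\wedge1\big\|_{L^1(\{\hit{A}{\sXX(\cdot,\cdot)}>t\})}\ge\varepsilon_0$, pass to the narrow limit $\eeta$ as above (whose associated flow must be the unique $\XX$, with $\XX=\XX^A$ on $[0,t]$ on $\{T_A>t\}$ by Theorem~\ref{thm:main-o}(b)), and form the graph couplings $\ssigma^n:=(x\mapsto(x,\XX^n_A(\cdot,x)))_\#\Leb{d}_A\in\Probabilities{\overline A\times C([0,T];\overline A)}$, with first marginal the fixed measure $\Leb{d}_A$ and second marginal $\eeta^n$. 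Any narrow limit $\ssigma$ of a subsequence disintegrates as $\int\delta_x\otimes\eeta_x\,d\Leb{d}_A(x)$, so on $\{T_A(x)>t\}$ one has $\max_{s\le t}|\gamma(s)-\XX^A(s,x)|=0$ for $\ssigma$-a.e.\ $(x,\gamma)$, whence $\int\chi_{\{T_A>t\}}(x)\big(\max_{s\le t}|\gamma(s)-\XX^A(s,x)|\wedge1\big)\,d\ssigma=0$. Passing to the limit in the corresponding integrals against $\ssigma^n$ — the $\ssigma^n$-mass outside any compact set of $x$'s being bounded uniformly in $n$ since the first marginal is always $\Leb{d}_A$, and $x\mapsto\XX^A(\cdot,x)$ and $T_A$ being made continuous on a large compact set by Lusin's theorem, so that on that set the integrand is continuous and bounded — shows that the $L^1$ quantity in \eqref{eqn:stab-o} tends to $0$ along that subsequence, contradicting $\ge\varepsilon_0$. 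The main obstacle is the middle estimate of the second paragraph: passing to the limit in the ODE under merely weak-in-time convergence of the velocities, which is overcome precisely by combining the uniform spatial equicontinuity (b), spatial mollification, the time-partition/freezing trick, and the uniform compressibility bound (a).
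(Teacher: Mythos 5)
Your proposal is correct and follows essentially the same route as the paper: stopped trajectories encoded as regular generalized flows in $\overline A$ with uniform compressibility, tightness via Dunford--Pettis, identification of the limit as a flow of $\chi_A\bb$ by spatial mollification plus a time-partition/freezing argument exploiting the weak-in-time convergence of the primitives (this is the paper's Proposition~\ref{thm:tightness-o-fortedeb}), then Proposition~\ref{prop:auxiliary}(i) and gluing, and finally convergence in measure via graph couplings (your contradiction-plus-Lusin argument is a hand-rolled version of the paper's Lemma~\ref{lemma:misure}). The only cosmetic deviations are that you bound the freezing error by $\int|\dot\eta|$ together with time-equi-integrability of $\|\bb^n(s,\cdot)\|_{L^1}$ instead of the paper's splitting of path space into the sets $\Gamma_k$ with uniform moduli of continuity, and that you mollify $\bb^n$ rather than $\chi_{A^\eps}\bb^n$ (which requires the equi-integrability of $\bb^n$ on a slightly larger $A'\Subset\O$ near $\partial A$, exactly as in the paper's estimate).
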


\begin{remark}\label{rem:weakspacetime}{\rm
The convergence \eqref{hp:conv-b} and \eqref{eqn:conv-b-forte-deb} of $\bb^n$ to $\bb$ is implied by the strong convergence of $\bb^n$ to $\bb$ in space-time. It is however quite natural to state the convergence in these terms in view of some applications. For example, the weak convergence of \eqref{hp:conv-b} and the boundedness in a fractional Sobolev space $\bb^n \in L^1((0,T); W^{m,p}(\R^d))$,
 $p > 1, m > 0 $,  is enough to guarantee that \eqref{eqn:conv-b-forte-deb} holds.  The same kind of convergence appears in \cite[Theorem II.7]{lions} to prove convergence of distributional solutions of the continuity equation, and in \cite[Remark 2.11]{crippade} in the context of quantitative estimates on the flows of Sobolev vector fields.

The convergence of the vector fields in \eqref{eqn:stab-o} is localised to the trajectories of $\bb$ which are inside $A$ in $[0,t]$. This is indeed natural: even with smooth vector fields one can construct examples where the existence time of $\XX(\cdot, x)$ is strictly smaller than the existence time of $\XX^n(\cdot, x)$ and the convergence of $\XX^n(\cdot, x)$ to $\XX(\cdot, x)$, or to its constant extension beyond the existence time $T_{\O, \sXX}(x)$, fails after $T_{\O, \sXX}(x)$ (see Figure~\ref{fig:convergence}).
\begin{figure}[]
\center \includegraphics[scale=0.60]{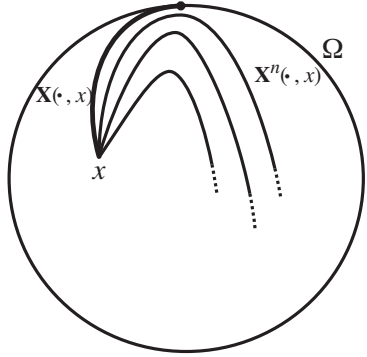}
\caption{One can build a sequence of smooth vector fields $\bb^n$ whose trajectories $\XX^n(\cdot, x)$ starting from a point $x$ is drawn in the figure. These trajectories fail to converge to the constant extension of $\XX(\cdot, x)$ after $T_{\O,\sXX}(x)$.}
\label{fig:convergence}
\end{figure} 
}\end{remark}

The stability of maximal flows in Theorem~\ref{thm:stab-o} implies a lower semicontinuity property of hitting times.
\begin{corollary}[Semicontinuity of hitting times]
With the same notation and assumptions of Theorem~\ref{thm:stab-o}, for every $t\in [0,T]$ we have that
\begin{equation}
\label{eqn:stab-hitting}
\lim_{n\to \infty}\Leb{d}\big( \{ x:\ \hit{A}{\XX^{n}(\cdot,x)} \leq t< \hit{A}{\XX(\cdot,x)} \} \big)=0.
\end{equation}¥In particular, there exists a subsequence $n(k) \to \infty$ (which depends, in particular, on $A$) such that
\begin{equation}
\label{eqn:hitting-lsc}
\hit{A}{\XX(\cdot,x)} \leq
\liminf_{k\to\infty}\hit{A}{\XX^{n(k)}(\cdot,x)}
\quad\text{$\Leb{d}$-a.e. in $A$.}
\end{equation}¥\end{corollary}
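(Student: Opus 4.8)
The plan is to deduce the corollary directly from the $L^1$ convergence \eqref{eqn:stab-o} of Theorem~\ref{thm:stab-o}, using only measure-theoretic arguments. First I would fix $t\in[0,T]$ and an open set $A\Subset\O$, and set $E_n:=\{x:\ \hit{A}{\XX^n(\cdot,x)}\leq t<\hit{A}{\XX(\cdot,x)}\}$. The key observation is that on $E_n$ the modified flow $\XX^n_A$ has been frozen at a boundary point at some time $\leq t$, while $\XX(\cdot,x)$ still lies well inside $A$ at time $t$ (since $\hit{A}{\XX(\cdot,x)}>t$ means $\max_{[0,t]}V_A(\XX(\cdot,x))<\infty$, so $\XX(t,x)\in A$); hence there is a quantitative separation between $\XX^n_A(\cdot,x)$ and $\XX(\cdot,x)$ on $[0,t]$ for $x\in E_n$. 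More precisely, I would argue that for $x\in E_n$ we have $\sup_{s\in[0,t]}|\XX^n_A(s,x)-\XX(s,x)|\wedge 1\geq c(x)>0$, where $c(x)$ can be controlled from below on a large portion of $E_n$: indeed $\XX^n_A(\cdot,x)$ takes a value on $\partial A$, so it escapes every compact $K\Subset A$ eventually, whereas $\XX(\cdot,x)|_{[0,t]}$ stays in a compact subset of $A$.

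To make this rigorous I would partition. For $\delta>0$ let $K_\delta:=\{x\in A:\ {\rm dist}(x,\R^d\setminus A)\geq\delta\}$ and, using that $\hit{A}{\XX(\cdot,x)}>t$, write $A\cap\{\hit{A}{\XX(\cdot,\cdot)}>t\}$ as an increasing union (in $\delta\downarrow 0$) of the sets $G_\delta:=\{x:\ \XX(s,x)\in K_\delta\ \forall s\in[0,t]\}$. Fix $\epsilon>0$ and choose $\delta$ so that $\Leb{d}(E_n\setminus G_\delta)<\epsilon$ uniformly in $n$ — this is possible because $E_n\subset\{\hit{A}{\XX(\cdot,x)}>t\}$ and the latter set is the increasing union of the $G_\delta$, so $\Leb{d}(\{\hit{A}{\XX(\cdot,x)}>t\}\setminus G_\delta)\to 0$. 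On $E_n\cap G_\delta$, the curve $\XX(\cdot,x)$ stays in $K_\delta$ on $[0,t]$ while $\XX^n_A(\cdot,x)$ reaches $\partial A$ at some time $\leq t$, so $\sup_{s\in[0,t]}|\XX^n_A(s,x)-\XX(s,x)|\geq\delta$, whence $\sup_{s\in[0,t]}|\XX^n_A(s,x)-\XX(s,x)|\wedge 1\geq\delta\wedge 1$. Therefore
\begin{equation*}
(\delta\wedge 1)\,\Leb{d}(E_n\cap G_\delta)\leq\Bigl\|\max_{s\in[0,t]}|\XX^n_A(s,\cdot)-\XX(s,\cdot)|\wedge 1\Bigr\|_{L^1(\{x:\ \hit{A}{\sXX(\cdot,x)}>t\})},
\end{equation*}
and the right-hand side tends to $0$ by \eqref{eqn:stab-o}. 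Hence $\limsup_n\Leb{d}(E_n)\leq\limsup_n\Leb{d}(E_n\cap G_\delta)+\epsilon=\epsilon$, and letting $\epsilon\downarrow 0$ gives \eqref{eqn:stab-hitting}.

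For the second statement, \eqref{eqn:hitting-lsc}, I would apply \eqref{eqn:stab-hitting} with $t$ ranging over a countable dense set $Q\subset[0,T]$, say $Q=\Q\cap[0,T]$. For each $t\in Q$ we have $\Leb{d}(E_n^t)\to 0$ with $E_n^t=\{x:\ \hit{A}{\XX^n(\cdot,x)}\leq t<\hit{A}{\XX(\cdot,x)}\}$, so along a diagonal subsequence $n(k)$ (chosen so that $\sum_k\Leb{d}(E_{n(k)}^t)<\infty$ for every $t\in Q$ simultaneously — possible by a standard diagonal extraction since $Q$ is countable) the Borel–Cantelli lemma gives that for $\Leb{d}$-a.e. $x\in A$, for every $t\in Q$, $x$ belongs to only finitely many $E_{n(k)}^t$. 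This means: for $\Leb{d}$-a.e.\ $x$ and every $t\in Q$ with $t<\hit{A}{\XX(\cdot,x)}$, one has $\hit{A}{\XX^{n(k)}(\cdot,x)}>t$ for $k$ large, i.e. $\liminf_k\hit{A}{\XX^{n(k)}(\cdot,x)}\geq t$. Taking the supremum over such rational $t$ yields $\liminf_k\hit{A}{\XX^{n(k)}(\cdot,x)}\geq\hit{A}{\XX(\cdot,x)}$ for $\Leb{d}$-a.e.\ $x$, which is \eqref{eqn:hitting-lsc}.

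The main obstacle is the geometric separation estimate on $E_n\cap G_\delta$: one must be careful that $\XX^n_A(\cdot,x)$, by its very definition as the flow $\XX^n$ stopped at the first time it hits $\R^d\setminus A$, actually attains a value in $\overline{A}\setminus K_\delta$ (in fact on $\partial A$) once $\hit{A}{\XX^n(\cdot,x)}\leq t$, so that the distance to the compactly-contained curve $\XX(\cdot,x)|_{[0,t]}$ is genuinely bounded below by $\delta$. Everything else is routine measure theory (uniform smallness of $\Leb{d}(\{\hit{A}{\XX(\cdot,x)}>t\}\setminus G_\delta)$ via monotone convergence, Chebyshev's inequality, and a countable diagonal/Borel–Cantelli argument).
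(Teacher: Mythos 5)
Your proposal is correct and follows essentially the same route as the paper: the same geometric separation estimate (on the bad set, $\XX^n_A(\cdot,x)$ reaches $\partial A$ while $\XX([0,t],x)$ is a compact subset of $A$, so $\max_{s\in[0,t]}|\XX^n_A(s,x)-\XX(s,x)|\geq{\rm dist}(\partial A,\XX([0,t],x))>0$) combined with \eqref{eqn:stab-o}, followed by a diagonal extraction over rational times. You merely make explicit the Chebyshev/exhaustion step (via the sets $G_\delta$) and the Borel--Cantelli extraction that the paper leaves implicit.
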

\begin{proof}
For every $x$ such that $\hit{A}{\XX^{n}(\cdot,x)} \leq t< \hit{A}{\XX(\cdot,x)}$ we have that 
$$\max_{s\in [0,t]} |\XX^n_A(s,x) - \XX(s,x)| \geq {\rm dist} (\partial A, \XX([0,t],x))>0.$$
It implies, together with \eqref{eqn:stab-o}, that \eqref{eqn:stab-hitting} holds.

Up to a subsequence and with a diagonal argument, by \eqref{eqn:stab-hitting} we deduce that for every $t\in \Q \cap [0,T]$ the functions $1_{\{\hit{A}{\sXX^{n(k)}(\cdot,x)} \leq t\}}$ converge pointwise a.e. to $0$ in $\{\hit{A}{\XX(\cdot,x)} > t\}$ and therefore for a.e. $x$ such that $t< \hit{A}{\XX(\cdot,x)}$ we have $\hit{A}{\XX^{n}(\cdot,x)} >t$ for $n$ large enough.
This implies that for every $t\in \Q \cap [0,T]$, for a.e. $x$ such that $t< \hit{A}{\XX(\cdot,x)}$ we have
$$ t \leq
\liminf_{k\to\infty}\hit{A}{\XX^{n(k)}(\cdot,x)}
\quad\text{$\Leb{d}$-a.e. in $A$,}
$$
which implies \eqref{eqn:hitting-lsc}.
\end{proof}

The proof of the stability of maximal regular flows in $\O$ is based on a tightness and stability result for regular generalized flows in $\overline{A}$
(according to Definition~\ref{def:dregbar}), as the one presented in Theorem~\ref{thm:tightness-o} under the assumption of the strong space-time convergence of the vector fields.

\begin{proposition}[Tightness and stability of generalized regular flows]\label{thm:tightness-o-fortedeb}
Let $A \subset \R^d$ be a bounded open set.
The result of Theorem~\ref{thm:tightness-o} holds true also if we replace the strong convergence of the vector fields \eqref{eqn:bn-conv} with the assumptions 
\begin{equation}\label{eqn:conv-b-forte-deb-bis}
\lim_{h\to 0}\chi_{A^{|h|}}(x+h)\cc^n(t,x+h)=\chi_A(x)\cc^n(t,x)\quad\text{in $L^1((0,T)\times A)$, uniformly w.r.t. $n$,}
\end{equation}
\begin{equation}\label{hp:conv-b-bis}
\cc^n \rightharpoonup \cc \qquad \mbox{weakly in } L^1((0,T) \times A),
\end{equation}
where $A^\eps:= \{ x\in A: {\rm dist}(x,\R^d \setminus A) \geq \eps\}$ for $\eps>0$ (compare with \eqref{eqn:conv-b-forte-deb} and \eqref{hp:conv-b}).
\end{proposition}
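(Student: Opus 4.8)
The plan is to re-run the proof of Theorem~\ref{thm:tightness-o} and isolate the single point where the strong convergence \eqref{eqn:bn-conv} was genuinely used. First note that \eqref{hp:conv-b-bis} still makes $\{\cc^n\}$ equi-integrable on $(0,T)\times A$ by Dunford--Pettis, so a common superlinear modulus of integrability $F$ as in \eqref{eqn:high-int} exists; consequently the coercive, lower semicontinuous functional $\Sigma$ of that proof, the tightness of $(\eeta^n)$, the concentration of every limit point $\eeta$ on $AC([0,T];\overline{A})$, and the passage to the limit in the compressibility estimates \eqref{eqn:noconcentration-o} and \eqref{eq:pasquetta4} are all unchanged, since they only use the narrow convergence $(e_t)_\#\eeta^n\rightharpoonup(e_t)_\#\eeta$. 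Hence the only thing left to show is that a limit point $\eeta$ is concentrated on integral curves of $\cc$, i.e. that \eqref{valleggi} holds. Exactly as in Theorem~\ref{thm:tightness-o}, by approximating $\cc$ in $L^1((0,T)\times A)$ with continuous vector fields $\cc'$ vanishing on $[0,T]\times\partial A$ and using $\int\!\int_0^T|\cc-\cc'|(s,\eta(s))\,ds\,d\eeta=\int_0^T\!\int_A|\cc-\cc'|\,d(e_s)_\#\eeta\,ds\le C\|\cc-\cc'\|_{L^1((0,T)\times A)}$ (which relies on \eqref{eqn:noconcentration-o} and on $\cc=\cc'=0$ on $\partial A$, so that the possible singular part of $(e_s)_\#\eeta$ on $\partial A$ is harmless), it suffices to prove the analogue of \eqref{valleggi1}, namely
\begin{equation*}
\int\Bigl|\eta(t)-\eta(0)-\int_0^t\cc'(s,\eta(s))\,ds\Bigr|\,d\eeta\leq C\int_{(0,T)\times A}|\cc-\cc'|\,dx\,dt,\qquad C:=\sup_n C_n,
\end{equation*}
for all such $\cc'$.

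Since $\cc'$ is continuous, the functional $\eta\mapsto|\eta(t)-\eta(0)-\int_0^t\cc'(s,\eta(s))\,ds|$ is bounded and continuous on $C([0,T];\overline{A})$, so the left-hand side equals $\lim_n\int|\int_0^t(\cc^n-\cc')(s,\eta(s))\,ds|\,d\eeta^n$, using that $\eeta^n$ is concentrated on integral curves of $\cc^n$. Writing $\cc^n-\cc'=(\cc^n-\cc)+(\cc-\cc')$ inside the absolute value and integrating against $\eeta^n$, the $\cc-\cc'$ part is again bounded by $C\int_{(0,T)\times A}|\cc-\cc'|\,dx\,dt$ (via $\cc=\cc'=0$ on $\partial A$ and $((e_s)_\#\eeta^n)\res A\le C_n\Leb{d}$), so it remains to prove $\lim_n\int|\int_0^t(\cc^n-\cc)(s,\eta(s))\,ds|\,d\eeta^n=0$. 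Here $\cc^n-\cc$ converges only weakly, so I mollify: fix a spatial mollifier $\rho_\delta$ and a smooth cutoff $\chi$ supported in $A$ and equal to $1$ outside a $\sigma$-neighbourhood of $\partial A$, and compare $\cc^n-\cc$ with $\chi\bigl(\rho_\delta\ast_x(\cc^n-\cc)\bigr)$. By \eqref{eqn:conv-b-forte-deb-bis} and the equi-integrability of $\{\cc^n\}$, the quantities $\sup_n\|\cc^n-\chi(\rho_\delta\ast_x\cc^n)\|_{L^1((0,T)\times A)}$ and $\|\cc-\chi(\rho_\delta\ast_x\cc)\|_{L^1((0,T)\times A)}$ tend to $0$ as $\delta,\sigma\to0$, and the corresponding contributions to the integral are bounded by $C$ times these norms, uniformly in $n$ (all these differences vanish on $\partial A$, so only $((e_s)_\#\eeta^n)\res A\le C_n\Leb{d}$ is used). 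The remaining term involves $h^n(\eta):=\int_0^t\chi(\eta(s))\int(\cc^n-\cc)(s,z)\rho_\delta(\eta(s)-z)\,dz\,ds$; the smoothness of $\chi$ and $\rho_\delta$ and the uniform bound $\sup_n\|\cc^n-\cc\|_{L^1((0,T)\times A)}<\infty$ make $\{h^n\}$ equi-Lipschitz and equi-bounded on $C([0,T];\overline{A})$, while for each fixed $\eta$ the integrand is, as a function of $(s,z)$, an $L^\infty$ function with bounded support, so \eqref{hp:conv-b-bis} gives $h^n(\eta)\to0$. On the compact sublevel set $\{\Sigma\le M\}$, which carries $\eeta^n$-mass $\ge 1-c/M$ uniformly in $n$, equicontinuity plus pointwise convergence upgrade to uniform convergence $\sup_{\{\Sigma\le M\}}|h^n|\to0$; together with $\sup_n\|h^n\|_\infty<\infty$ this yields $\limsup_n\int|h^n|\,d\eeta^n\le(\mathrm{const})/M$, hence $\to0$ on letting $M\to\infty$. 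Collecting the three pieces and letting $\delta,\sigma\to0$ proves the displayed inequality, and then \eqref{valleggi} exactly as in Theorem~\ref{thm:tightness-o}.

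The main obstacle is precisely that the hypotheses give only weak $L^1$ convergence of $\cc^n$ to $\cc$, which cannot be upgraded to strong convergence (a spatially fixed, time-oscillating perturbation of $\cc$ satisfies \eqref{eqn:conv-b-forte-deb-bis}--\eqref{hp:conv-b-bis} without converging strongly), so one cannot simply pass to the limit in $\int|\cc^n-\cc'|\,dx\,dt$ as in Theorem~\ref{thm:tightness-o}. The device is to use \eqref{eqn:conv-b-forte-deb-bis} to mollify $\cc^n-\cc$ in space, after which, evaluated along a fixed curve, the quantity becomes the time integral of a weakly $L^1$-null sequence tested against a continuous-in-time weight, and therefore vanishes; the equi-Lipschitz dependence on the curve together with the tightness of $(\eeta^n)$ then promotes this pointwise vanishing to vanishing of the integral against $\eeta^n$. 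The boundary terms near $\partial A$ are absorbed by the cutoff $\chi$, in the same way the condition $\cc'=0$ on $[0,T]\times\partial A$ is used in the proof of Theorem~\ref{thm:tightness-o}.
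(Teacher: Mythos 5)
Your proof is correct, and while it shares the paper's two central ideas --- mollify in space using the uniform translation continuity \eqref{eqn:conv-b-forte-deb-bis}, then exploit the weak convergence \eqref{hp:conv-b-bis} tested against the mollification kernel evaluated along a fixed curve, with tightness controlling the exceptional set of curves --- it organizes the argument differently and executes the key step by a different device. The paper works with $\cc^{n,\eps}=(\cc^n\chi_{A^\eps})\ast\rho_\eps$ and $\cc^\eps$, splits $\cc^n-\cc^\eps=(\cc^n-\cc^{n,\eps})+(\cc^{n,\eps}-\cc^\eps)$, and for the second piece partitions $[0,t]$ into $N$ subintervals, freezes each curve at the partition times using the uniform modulus of continuity on $\Gamma_k$, and invokes the bounded compression $(e_{t^N_i})_\#\eeta^n\res A\leq C_n\Leb{d}$ at each partition time before applying the weak convergence and dominated convergence (see \eqref{eqn:integral-curves-eps4}). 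You instead reduce to a continuous $\cc'$ as in \eqref{valleggi1}, isolate the genuinely new term $h^n(\eta)=\int_0^t\chi(\eta(s))[\rho_\delta\ast(\cc^n-\cc)](s,\eta(s))\,ds$, and observe that $\{h^n\}$ is equibounded and equi-Lipschitz on $C([0,T];\overline{A})$ (with constants depending on $\delta,\sigma$ but not on $n$, since $\sup_n\|\cc^n-\cc\|_{L^1}<\infty$) and converges pointwise to zero by \eqref{hp:conv-b-bis}; Arzel\`a--Ascoli then upgrades this to uniform convergence on the compact sublevels $\{\Sigma\leq M\}$, and the Chebyshev bound $\eeta^n(\{\Sigma>M\})\leq c/M$ finishes. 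This is a genuine simplification: it dispenses with the time discretization entirely and uses no compressibility constant in this step, only the coercive functional $\Sigma$; the compressibility is still needed, as in your write-up, to absorb the $L^1$-small errors $\cc^n-\chi(\rho_\delta\ast\cc^n)$ and $\cc-\chi(\rho_\delta\ast\cc)$, for which you correctly combine \eqref{eqn:conv-b-forte-deb-bis} with equi-integrability near $\partial A$, exactly as the paper does when comparing $\cc^{n,\eps}$ with $\cc^n$. The only point to tighten is the assertion that the left-hand side of your key inequality \emph{equals} the limit over $n$: the integrand, extended to $C([0,T];\R^d)$, is continuous but unbounded, so either truncate (harmless, since all measures are concentrated on curves valued in the compact set $\overline{A}$) or settle, as the paper does, for the one-sided inequality via lower semicontinuity, which is all you actually use.
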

\begin{Proof}
The tightness was based on Dunford-Pettis' theorem and it can be repeated in this context
thanks to \eqref{hp:conv-b-bis}:
in particular,
there exists a modulus of integrability $F$ such that
\begin{equation}
\label{eqn:mod-of-int}
\sup_{n\in \N} \int\int_0^TF(|\dot\eta(t)|)\,dt\,d\eeta^n < \infty.
\end{equation}
We show that $\eeta$ is concentrated on integral curves of $\cc$, namely
\begin{equation}\label{eqn:integral-curves}
\int\left|\eta(t)-\eta(0)-\int_0^t\cc(s,\eta(s))\,ds\right|\,d\eeta(\eta)=0
\end{equation}
for any $t\in [0,T]$.
To this end we consider $\cc^\eps := (\cc \chi_{A^\eps}) \ast \rho_\eps$, where $\rho_\eps(x):= \eps^n \rho(x/\eps)$, $\rho \in C^\infty_c(\R^d)$ nonnegative, 
is a standard convolution kernel in the space variable with compact support in the unit ball.
Notice that $\cc^\eps \in L^1((0,T); C^\infty_c(A;\R^d))$ and that $|\cc^\eps-\cc|\to 0$ in $L^1((0,T) \times A)$ as $\eps \to 0$. 
Similarly, for every $n\in \N$ we set $\cc^{n,\eps} := (\cc^n \chi_{A^\eps}) \ast \rho_\eps$.
We first prove that, for every $\eps>0$, 
\begin{equation}\label{eqn:integral-curves-eps}
\int\left|\eta(t)-\eta(0)-\int_0^t\cc^\eps(s,\eta(s))\,ds\right|\,d\eeta(\eta) \leq \omega(\eps),
\end{equation}
where $\omega:(0,\infty) \to (0,\infty)$ is a nondecreasing function which goes to $0$ as $\eps \to 0$ to be chosen later.

Since the integrand is a continuous (possibly unbounded) function of $\eta \in C([0,T]; \R^d)$ and $\eeta^n$ is concentrated 
on integral curves of $\cc^n$, by the triangular inequality we have the estimate
\begin{eqnarray}
\label{eqn:integral-curves-eps1}
&&\int\left|\eta(t)-\eta(0)-\int_0^t\cc^\eps(s,\eta(s))\,ds\right|\,d\eeta(\eta)\\\nonumber&\leq&
\liminf_{n\to\infty} \int\left|\eta(t)-\eta(0)-\int_0^t\cc^\eps(s,\eta(s))\,ds\right|\,d\eeta^n(\eta) 
\\\nonumber
&\leq&
\liminf_{n\to\infty} \left[ \int\left|\int_0^t[\cc^{n}-\cc^{n,\eps}](s,\eta(s))\,ds\right|\,d\eeta^n(\eta) 
+
\int\left|\int_0^t[\cc^{n,\eps}-\cc^\eps](s,\eta(s))\,ds\right|\,d\eeta^n(\eta).
 \right]
\end{eqnarray}
To estimate the first term in the right-hand side of \eqref{eqn:integral-curves-eps1}, we notice that 
$$\sup_{n\in \N} \|\cc^{n, \eps} - \cc^n\|_{L^1((0,T) \times A)} \leq \omega(\eps)$$
and $\omega(\eps)  \to 0$ as $\eps \to 0$.
Indeed, consider a nondecreasing function $\omega_0:(0,\infty) \to (0,\infty)$ which goes to $0$ as $\eps \to 0$ and such that
 \begin{equation}
\label{defn:omega}
\|\chi_{A^{|h|}}(x-h)\cc^n(t,x-h)-\chi_A(x)\cc^n(t,x)\|_{L^1((0,T)\times A)} \leq \omega_0(|h|)
\end{equation}
for every $n \in \N$, which exists thanks to \eqref{eqn:conv-b-forte-deb-bis}.
We notice that
\begin{equation*}
\begin{split}
\int_0^T \int_A |\cc^{n, \eps} - \cc^n| \, dx\, dt 
&\leq
\int_{\R^d} \rho_\eps(z) \int_0^T \int_A |\chi_{A^{\eps}}(x-z)\cc^{n}(t,x-z) - \cc^n(t,x)| \, dx\, dt \, dz
\\
&\leq
\int_{\R^d} \rho_\eps(z) \int_0^T \int_A [\chi_{A^{|z|}}(x-z)-\chi_{A^{\eps}}(x-z)] |\cc^{n}(t,x-z)|\, dx\, dt \, dz
\\
&\hspace{1em} +\int_{\R^d} \rho_\eps(z) \int_0^T \int_A |\chi_{A^{|z|}}(x-z)\cc^{n}(t,x-z) - \cc^n(t,x)| \, dx\, dt \, dz
\\
&\leq
\int_{\R^d} \rho_\eps(z) \int_0^T \int_{\R^d} [\chi_{A}(x)-\chi_{A^{\eps}}(x)] |\cc^{n}(t,x)|\, dx\, dt \, dz +\omega_0(\eps)
\end{split}
\end{equation*}
and the first term converges to $0$ uniformly in $n$ thanks to \eqref{hp:conv-b-bis}, Dunford-Pettis' theorem and since $A^\eps \uparrow A$ as $\eps \to 0$.

Hence, using the fact that $\cc^n=0$ on $\partial A$ and the definition \eqref{eqn:noconcentration-o} of compressibility constant $C_n$ for
$\eeta^n$ we get
\begin{equation}
\label{eqn:integral-curves-eps2} \int\left|\int_0^t[\cc^{n}-\cc^{n,\eps}](s,\eta(s))\,ds\right|\,d\eeta^n(\eta) 
\leq
C_n \int_\O\int_0^t|\cc^{n}-\cc^{n,\eps}|\,ds\, dx 
\leq \sup_n C_n \,\omega(\eps).
\end{equation}
We now estimate the second term in the right-hand side of \eqref{eqn:integral-curves-eps1}. To this end, for every $k>0$ we consider the set of curves
$$\Gamma_k := \Big\{ \eta \in AC([0,T]; \overline{A}): \int_0^TF(|\dot\eta(t)|)\,dt \leq k \Big\}.$$
We notice that all curves in $\Gamma_k$ have a uniform modulus of continuity that we 
denote by $\tilde \omega_k$. 
By Chebyshev's inequality and \eqref{eqn:mod-of-int} we deduce that
$$\eeta^n(C([0,T]; \overline{A}) \setminus \Gamma_k) \leq \frac{C}{k}$$
for some constant $C>0$, hence in
 the complement of $\Gamma_k$ we estimate the integrand with its $L^\infty$ norm:
\begin{equation}
\label{eqn:integral-curves-eps3}
\begin{split}
\int_{\Gamma^c_k}
\left|\int_0^t[\cc^{n,\eps}-\cc^\eps](s,\eta(s))\,ds\right|\,d\eeta^n(\eta) 
&\leq
\eeta^n( \Gamma^c_k) \int_0^T \| [\cc^{n,\eps}-\cc^\eps](s,\cdot)\|_{L^\infty(A)} \, ds
\\
&\leq
\frac{C}{k} \|\cc^n-\cc\|_{L^1((0,T) \times A)} \|\rho_\eps\|_{L^\infty(A)}.
\end{split}
\end{equation}
Hence, choosing $k$ large enough we can make this term as small as we wish uniformly with respect to $n$, since $\|\cc^n-\cc\|_{L^1((0,T) \times A)} \leq \|\cc^n\|_{L^1((0,T) \times A)}+ \|\cc\|_{L^1((0,T) \times A)}$ is bounded.

In $\Gamma_k$, for any $N\in\N$ we can use the triangular inequality, the fact that $\cc^{n,\eps}$ and $\cc^\eps$ are null on $(0,T)\times\partial A$, and 
the bounded compression condition $(e_{i/N})_\# \eeta^n \res A \leq C_n \Leb{d}$ for every $i=1,\ldots,N$, to get
\begin{eqnarray}
\label{eqn:integral-curves-eps4}
\int_{\Gamma_k}\left|\int_0^t[\cc^{n,\eps}-\cc^\eps](s,\eta(s))\,ds\right|\,d\eeta^n(\eta) 
&\leq&
\sum_{i=1}^N \int_{\Gamma_k}\left|\int_{t^N_{i-1}}^{t^N_i}[\cc^{n,\eps}-\cc^\eps](s,\eta(s))\,ds\right|\,d\eeta^n(\eta) 
\\
&\leq &\nonumber
\sum_{i=1}^N \int_{\Gamma_k}\left|\int_{t^N_{i-1}}^{t^N_i}[\cc^{n,\eps}-\cc^\eps]\big(s,\eta\big(t^N_i\big)\big)\,ds\right|\,d\eeta^n(\eta) 
\\ \nonumber
&&+\tilde \omega_k\Big( \frac{t}{N}\Big)
\sum_{i=1}^N \int_{t^N_{i-1}}^{t^N_i}\|\nabla[\cc^{n,\eps}-\cc^\eps](s,\cdot)\|_{L^\infty(A)}\,ds 
\\
&\leq&\nonumber
C_n\sum_{i=1}^N \int_A\left|\int_{t^N_{i-1}}^{t^N_i}[\cc^{n,\eps}-\cc^\eps]\,ds\right|\,dx
\\
&&\nonumber +
\tilde \omega_k\Big( \frac{t}{N}\Big)
\|\cc^n-\cc\|_{L^1((0,T) \times A)} \|\nabla \rho_\eps\|_{L^\infty(\R^d)},
\end{eqnarray}
where $t^N_i = it/N$.
Choosing $N$ large enough we can make the second term in the right-hand side as small as we want, uniformly in $n$.
Letting $n \to \infty$ in \eqref{eqn:integral-curves-eps4}, each term in the first sum in the right-hand side converges to $0$ pointwise in $x$ by the weak convergence \eqref{hp:conv-b} tested with the function $\varphi_x(s,y) = 1_{[t^N_{i-1}, t^N_i]}(s) \rho_\eps(x-y)$, namely, for every $x\in A$,
$$
\lim_{n \to \infty} \int_{t^N_{i-1}}^{t^N_i}[\cc^{n,\eps}-\cc^\eps](s,x)\,ds
=
\lim_{n \to \infty} \int_{t^N_{i-1}}^{t^N_i}[\cc^n-\cc](s,y) \rho_\eps(x-y)\,ds
=0.
$$
These functions are bounded by $\|\cc^n-\cc\|_{L^1((0,T) \times A)} \|\rho_\eps\|_{L^\infty(\R^d)}$, thus by dominated convergence the first sum in the right-hand side of \eqref{eqn:integral-curves-eps4} converges to $0$. It follows that, given $\eps$ and $k$, by choosing $N$ sufficiently large we can make also this term
as small as we wish, hence \eqref{eqn:integral-curves-eps}
follows from \eqref{eqn:integral-curves-eps1}.
We now let $\eps \to 0$ in \eqref{eqn:integral-curves-eps} 
and  notice that, since $\eeta$ satisfies \eqref{eqn:noconcentration-o} with $C=\liminf_n C_n$
and $\cc^\eps \to \cc$ in $L^1((0,T) \times A)$,
$$\lim_{\eps \to 0} \int\left|\int_0^t[\cc-\cc^{\eps}](s,\eta(s))\,ds\right|\,d\eeta(\eta) 
\leq C\lim_{\eps \to 0} \int_A\int_0^t|\cc-\cc^{\eps}|\,ds\, dx =0,
$$
proving the validity of \eqref{eqn:integral-curves}.
\end{Proof}

The following lemma is a standard tool in optimal transport theory (for a proof, see for instance \cite[Lemma~22]{cetraro}, or \cite[Corollary 5.23]{villani09}).

\begin{lemma}\label{lemma:misure} Let $X_1,\,X_2$ be Polish metric spaces, let $\mu \in \Probabilities{X_1}$, and let 
$F_n : X_1 \to X_2$ be a sequence of Borel functions. If 
\begin{equation}
\label{eqn:plans-conv}
({\rm Id},F_n)_\# \mu \rightharpoonup ({\rm Id},F)_\# \mu\qquad\text{narrowly in $\Probabilities{X_1\times X_2}$},
\end{equation}
then $F_n$ converge to $F$ in $\mu$-measure, namely
$$
\lim_{n\to \infty} \mu( \{ d_{X_2}(F_n,F) >\eps\} ) =0 \qquad \forall \,\eps>0.
$$
\end{lemma}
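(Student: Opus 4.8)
The plan is to deduce convergence in $\mu$-measure from a single application of the narrow convergence hypothesis, testing it against a carefully chosen function in $C_b(X_1\times X_2)$. Concretely, it suffices to prove
$$
I_n:=\int_{X_1}\bigl(1\wedge d_{X_2}(F_n(x),F(x))\bigr)\,d\mu(x)\ \longrightarrow\ 0,
$$
because then Markov's inequality gives, for any fixed $\e>0$,
$$
\mu\bigl(\{d_{X_2}(F_n,F)>\e\}\bigr)\le\mu\bigl(\{1\wedge d_{X_2}(F_n,F)\ge 1\wedge\e\}\bigr)\le\frac{I_n}{1\wedge\e}\ \longrightarrow\ 0.
$$
The only genuine difficulty in proving $I_n\to0$ is that the natural test function $\psi(x,y):=1\wedge d_{X_2}(y,F(x))$ is not continuous, since $F$ is merely Borel; this I would fix via Lusin's theorem.

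First I would fix $\d>0$ and, using that $\mu$ is automatically a Radon measure on the Polish space $X_1$ and $F$ is Borel with second countable target $X_2$, invoke Lusin's theorem to obtain a closed set $C\subset X_1$ with $\mu(X_1\setminus C)<\d$ such that $F|_C$ is continuous. Then $\psi$ restricted to the closed set $C\times X_2\subset X_1\times X_2$ is continuous and $[0,1]$-valued, so by Tietze's extension theorem it extends to some $\tilde\psi\in C_b(X_1\times X_2)$ with $0\le\tilde\psi\le1$ and $\tilde\psi=\psi$ on $C\times X_2$. Testing the narrow convergence \eqref{eqn:plans-conv} against $\tilde\psi$, I would use on the limiting side that $\psi(x,F(x))=0$ for $x\in C$, so that
$$
\int_{X_1\times X_2}\tilde\psi\,d\bigl(({\rm Id},F)_\#\mu\bigr)=\int_{X_1}\tilde\psi(x,F(x))\,d\mu(x)\le\mu(X_1\setminus C)<\d,
$$
while on the approximating side, discarding the nonnegative contribution outside $C$,
$$
\int_{X_1\times X_2}\tilde\psi\,d\bigl(({\rm Id},F_n)_\#\mu\bigr)=\int_{X_1}\tilde\psi(x,F_n(x))\,d\mu(x)\ge\int_{C}\bigl(1\wedge d_{X_2}(F_n(x),F(x))\bigr)\,d\mu(x).
$$

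Passing to the limit in $n$, these two inequalities yield $\limsup_n\int_C(1\wedge d_{X_2}(F_n,F))\,d\mu\le\d$, and adding the trivial bound $\int_{X_1\setminus C}(1\wedge d_{X_2}(F_n,F))\,d\mu\le\mu(X_1\setminus C)<\d$ gives $\limsup_n I_n\le2\d$; since $\d>0$ is arbitrary, $I_n\to0$, and the proof is complete. The step I expect to require the most care is precisely the replacement of the Borel map $F$ by a continuous one on a set of almost full measure: this is where the Polish (hence Radon) structure is used, via Lusin's theorem, and where one must check that Tietze's theorem genuinely applies — which it does, as $X_1\times X_2$ is metric (hence normal), $C\times X_2$ is closed in it, and $\psi$ takes values in $[0,1]$, so the extension can be kept with the same bounds.
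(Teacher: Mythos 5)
Your proof is correct, and it is essentially the standard argument for this lemma (the paper itself gives no proof, only citing \cite[Lemma~22]{cetraro} and \cite[Corollary 5.23]{villani09}, where the same Lusin-plus-truncated-distance test function scheme is used). All the technical points you flag — Lusin for Borel maps into a separable metric target, closedness of $C\times X_2$, Tietze with preserved bounds, and the vanishing of the test function on the graph of $F$ over $C$ — check out.
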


\begin{proof}[Proof of Theorem~\ref{thm:stab-o}]
Fix $A\Subset\O$ open, denote by $\Leb{d}_A$ the normalized Lebesgue measure on $A$,
and define  $\XX^n_A$ as in the statement of the theorem. Then the laws
$\eeta^n$ of $x\mapsto\XX^n_A(\cdot,x)$ under $\Leb{d}_A$ define regular generalized flows in $\overline{A}$ relative to
$\cc^n=\chi_A\bb^n$, according to
Definition~\ref{def:dregbar}, with compressibility
constants $C_n= C(A,\XX^n)$.  

Hence we can apply Proposition~\ref{thm:tightness-o-fortedeb} to obtain that, up to a subsequence, $\eeta^n$ weakly converge to a generalized  flow 
$\eeta$ in $\overline{A}$ relative to the vector field $\cc=\chi_A\bb$, with compressibility constant $C=\liminf_n C_n$. Let $\eeta_x$
be the conditional probability measures induced by the map $e_0$, and let
$\XX_A$ and $T_A$ be given by Proposition~\ref{prop:auxiliary}; recall that $\XX_A(\cdot,x)$
is an integral curve of $\bb$ in $[0,T_A(x)]$, that $\XX_A([0,T_A(x)),x)\subset A$, and that  $\XX_A(T_A(x),x)\in\partial A$ if $T_A(x)<T$; 
as explained in Remark~\ref{rem:auxiliary}, 
for $\Leb{d}_A$-almost every $x$ the hitting time $\hit{A}{\eta}$ is equal to $T_A(x)$ for $\eeta_x$-a.e. $\eta$, and 
$(e_t)_\#\eeta_x =\delta_{\sXX_A(t,x)} $ for all $t\in [0,T_A(x)]$.
For every $t\in [0,T]$ we set $E_{t,A} := \{ T_A(x) >t\}$;  since
$$\XX_A(s, \cdot)_\#( \Leb{d} \res E_{t,A}) = (e_s)_\# \int_{E_{t,A}} \delta_{\sXX_A(\cdot, x)} \, d\Leb{d} \leq (e_s)_\# \eeta \leq  C \Leb{d} \qquad \forall\,s\in [0,t], $$ 
we obtain that $\XX_A$ is a regular flow for $\bb$ on $[0,t] \times E_t$. Applying Theorem~\ref{thm:main-o}(b) to $\XX_{A_1}$ and $\XX_{A_2}$ with $A_1\subset A_2$
we deduce that $\XX_{A_1} = \XX_{A_2}$ on $E_{t,A_1}$, and this allows us (by a gluing procedure)
to obtain a maximal regular flow for $\bb$.

To prove the last statement, 
we apply Lemma~\ref{lemma:misure} with $X_1 = \R^d$, $\mu = (\Leb{d} \res \{ T_A >t \}) / \Leb{d} ( \{ T_A >t \} )$, $X_2 = C([0,t]; \overline{A})$, 
$F_n (x)= \XX^n_A(\cdot, x)$, $F(x)=\XX_A(\cdot,x)$. 
More precisely, we consider the laws $\tilde \eeta^n\in\Probabilities{C([0,t];\R^d)}$ of $x\mapsto\XX^n_A(\cdot,x)$ under $\mu$; with the same argument as above, we know that $\tilde \eeta^n$ weakly converge to $\tilde \eeta$ and that the disintegration $\tilde \eeta_x$ coincides with $\delta_{\sXX_A(\cdot,x)}$ for $\mu$-a.e. $x\in \R^d$ (notice that $\XX_A(\cdot,x)$ is defined in $[0,t]$ for $\mu$-a.e. $x$).
The assumption \eqref{eqn:plans-conv} is satisfied, since for every bounded continuous function 
$\varphi: \R^d \times  C([0,T]; \overline{A}) \to \R$ we have
$$
\int \varphi(x,\gamma) \, d ({\rm Id},  \XX^n_A(\cdot, x))_\# \mu(x,\gamma)
=
\int \varphi(\gamma(0), \gamma ) \, d \tilde \eeta^n(\gamma)
$$
(and similarly with $\tilde \eeta$) and the weak convergence of $\tilde \eeta^n$ to $\tilde \eeta$ shows that
$$
\lim_{n\to \infty} \int \varphi(x,\gamma) \, d ({\rm Id},  \XX_A^n(\cdot, x))_\# \mu(x, \gamma) = 
\int \varphi(x,\gamma) \, d ({\rm Id},  \XX_A(\cdot, x))_\# \mu(x, \gamma).
$$
We deduce the convergence in $\mu$-measure of $\XX_A^n$ to $\XX_A$ in $C([0,t];\overline{A})$, i.e.,
$$\lim_{n\to \infty} \Leb{d}\biggl(
\Bigl\{x \in \{T_A>t\}:\ \sup_{s\in [0,t]} |\XX_A^n(s,x)-\XX_A(s,x)| >\eps\Bigr\}\biggr) = 0
\qquad\forall\,\eps>0,
$$
from which \eqref{eqn:stab-o} follows easily.
\end{proof}

\section{Further properties implied by global bounds on divergence}

\subsection{Proper blow up of trajectories}\label{sec:proper-blow-up}

Recall that the blow-up time $T_{\sox}(x)$ for maximal regular flows is characterized by the property
$\limsup_{t\uparrow T_{\sox}(x)}V_\O(\XX(t,x))=\infty$ when $T_{\sox}(x)<T$. We say that $\XX(\cdot,x)$ blows up {\it properly}
(i.e. with no oscillations) if the stronger condition $\lim_{t\uparrow T_{\sox}(x)}V_\O(\XX(t,x))=\infty$ holds. In the following
theorem we prove this property when a global bounded compression condition on $\XX$ is available, see 
\eqref{eq:pasquetta1} below. Thanks to the properties of the maximal regular flow the global bounded compression
condition is fulfilled, for instance, in all
cases when the divergence bounds $L(\O')$ in \eqref{eqn:incompb-o-L} are uniformly bounded. More precisely 
\begin{equation}\label{eqn:incompb-o-g}
{\rm div\,}\bb(t,\cdot)\geq m(t)\quad\text{in $\O$, with $L(\O):=\int_0^T |m(t)|\, dt<\infty$}
\end{equation}
implies \eqref{eq:pasquetta1} with $C_*\leq e^{L(\O)}$.

\begin{theorem}\label{thm:properblo}
Let $\XX$ be a maximal regular flow relative to a Borel vector field $\bb$ satisfying 
(a-$\O$) and (\bbb-$\O$), and assume that the bounded compression 
condition is global, namely there exists a constant $C_*\geq 0$ satisfying
\begin{equation}\label{eq:pasquetta1}
\XX(t,\cdot)_\#\Leb{d}\res \{T_{\sox}>t\}\leq C_*\Leb{d}\qquad\forall \,t\in [0,T).
\end{equation}
Then 
$$\liminf_{t\uparrow T_{\sXX}(x)}|\XX(t,x)|= \infty \qquad\text{for $\Leb{2}$-a.e. $x\in\R^2$ such that }\limsup_{t\uparrow T_{\sXX}(x)}|\XX(t,x)|=\infty,$$
and in particular $\lim_{t\uparrow T_{\sox}(x)}V_\O(\XX(t,x))=\infty$ for $\Leb{d}$-a.e. $x$ with $T_{\sox}(x)<T$.
\end{theorem}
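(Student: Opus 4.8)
The plan is to prove the second (general) assertion by contradiction; the first follows from it in the case $\O=\R^d$, where $V_\O$ may be taken with $V_\O(x)\geq |x|$ so that $\lim_{t\uparrow T_{\sox}(x)}V_\O(\XX(t,x))=\infty$ is equivalent to $\lim_{t\uparrow T_{\sox}(x)}|\XX(t,x)|=\infty$. Suppose the general assertion fails. Since $\{\,T_{\sox}<T,\ \liminf_{t\uparrow T_{\sox}}V_\O(\XX(t,\cdot))<\infty\,\}=\bigcup_{M\in\N}G_M$ with
$$
G_M:=\bigl\{x:\ T_{\sox}(x)<T,\ \liminf_{t\uparrow T_{\sox}(x)}V_\O(\XX(t,x))<M\bigr\},
$$
there is $M\in\N$ with $\Leb{d}(G_M)>0$, and by Definition~\ref{def:maxflow-o}(iii) for $\Leb{d}$-a.e. $x\in G_M$ one also has $\limsup_{t\uparrow T_{\sox}(x)}V_\O(\XX(t,x))=\infty$. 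Fix such an $M$, set $M':=M+1$, and let $K:=\{x:\ M\leq V_\O(x)\leq M'\}$. Since $V_\O$ is confining, $\{V_\O\leq M'\}$ is a compact subset of $\O$, so $K\Subset\O$; moreover $\rho:=\mathrm{dist}(\{V_\O\leq M\},\{V_\O\geq M'\})>0$, being the distance between a compact set and a disjoint closed set.

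Next I would extract the oscillations. For $\Leb{d}$-a.e. $x\in G_M$ the function $t\mapsto V_\O(\XX(t,x))$ is continuous on $[0,T_{\sox}(x))$ with $\liminf<M$ and $\limsup=\infty>M'$, so elementary considerations produce countably many pairwise disjoint intervals $[a_k,b_k]\subset[0,T_{\sox}(x))$ along which $V_\O(\XX(\cdot,x))$ takes values in $[M,M']$, with $V_\O(\XX(a_k,x))=M$ and $V_\O(\XX(b_k,x))=M'$. On each such interval $\XX(t,x)\in K$, and since $\XX(\cdot,x)$ is locally absolutely continuous with $\dot\XX=\bb(t,\XX)$ a.e.,
$$
\int_{a_k}^{b_k}|\bb(t,\XX(t,x))|\,dt\ \geq\ |\XX(b_k,x)-\XX(a_k,x)|\ \geq\ \rho .
$$
Summing over $k$ yields, for $\Leb{d}$-a.e. $x\in G_M$,
$$
\int_0^{T_{\sox}(x)}|\bb(t,\XX(t,x))|\,\chi_K(\XX(t,x))\,dt=+\infty .
$$

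The key step is then to integrate this estimate in $x$ against the global bounded compression \eqref{eq:pasquetta1}. Since \eqref{eq:pasquetta1} is an inequality between measures, it gives $\int_{\{T_{\sox}>t\}}g(\XX(t,x))\,dx\leq C_*\int g\,d\Leb{d}$ for every nonnegative Borel $g$; applying this with $g(y)=|\bb(t,y)|\chi_K(y)$ and integrating in $t$, Tonelli's theorem gives
$$
\int_{\R^d}\Bigl(\int_0^{T_{\sox}(x)}|\bb(t,\XX(t,x))|\,\chi_K(\XX(t,x))\,dt\Bigr)dx\ \leq\ C_*\int_0^T\int_K|\bb(t,y)|\,dy\,dt\ <\ \infty,
$$
the finiteness being precisely assumption (a-$\O$) applied to $K\Subset\O$. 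This contradicts the previous display on the set $G_M$ of positive measure, so $\Leb{d}(G_M)=0$; letting $M\to\infty$ gives the conclusion. The only genuinely delicate point is the choice of the test quantity: one must weight the annulus indicator $\chi_K$ by the speed $|\bb(t,\XX(t,x))|$, since each oscillation forces a fixed amount of path length — hence of $\int|\bb|\,dt$ — but only an \emph{arbitrarily small} amount of time inside $K$, so the plain indicator $\chi_K$ would not close the argument. Measurability of the maps involved (the inner integral as a function of $x$, and the set $G_M$) is routine from the Borel regularity of $\XX$, $T_{\sox}$ and $\bb$ together with the continuity of $t\mapsto\XX(t,x)$.
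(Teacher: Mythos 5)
Your proposal is correct and takes essentially the same route as the paper's proof: in both cases the heart of the matter is that the global bounded compression \eqref{eq:pasquetta1} together with (a-$\O$) makes $\int_\O\int_0^{T_{\sox}(x)}|\bb(t,\XX(t,x))|\,w(\XX(t,x))\,dt\,dx$ finite for a weight $w$ concentrated on a compact ``annulus'' separating $\{V_\O\le M\}$ from $\{V_\O\ge M+1\}$ (the paper takes $w=|\nabla\psi_n|$ for a smooth cutoff $\psi_n$, you take $w=\chi_K$), whereas a trajectory oscillating between the two sides infinitely often would make the inner integral infinite. Your direct crossing-count, with each crossing costing at least $\rho=\mathrm{dist}(\{V_\O\le M\},\{V_\O\ge M+1\})>0$ of arc length, is just a reformulation of the paper's argument that $\psi_n\circ\XX(\cdot,x)$ is absolutely continuous up to $T_{\sox}(x)$, so no substantive difference.
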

\begin{Proof} Let $\O_n$ be open sets with $\O_n\Subset\O_{n+1}\Subset\O$, with
$\cup_n\O_n=\O$. We consider cut-off functions $\psi_n\in C^\infty_c(\O_{n+1})$ with $0\leq\psi_n\leq 1$
and $\psi_n\equiv 1$ on a neighborhood of $\overline{\O}_n$.

Since $\XX(\cdot,x)$ is an integral curve of $\bb$ for $\Leb{d}$-a.e $x\in\O$ we can use
\eqref{eq:pasquetta1} to estimate
\begin{equation}
\begin{split}
\int_{\O} \int_0^{T_{\sox}(x)} \Big| \frac{d}{dt} \psi_n(\XX(t,x))\Big| \, dt\, dx 
&\leq
\int_{\O} \int_0^{T_{\sox}(x)}| \nabla \psi_n(\XX(t,x))| \, |\bb(t,\XX(t,x))| \, dt\, dx 
\\&=
\int_0^T \int_{\{T_{\sox}>t\}} |\nabla \psi_n(\XX(t,x))|\, |\bb(t,\XX(t,x))| \, dx \, dt
\\
&\leq 
C_*\int_0^T \int_{\R^d} |\nabla \psi_n(y)| |\bb(t,y)| \, dy \, dt
\\
&\leq 
C \|\nabla \psi_n \|_{L^\infty(\O)} \int_0^T \int_{\O_{n+1}} |\bb(t,x)| \, dx \, dt.
\end{split}
\end{equation}
Hence $\psi_n(\XX(\cdot, x))$ is the restriction of an absolutely continuous map in $[0,T_{\sox}(x)]$ 
(and therefore uniformly continuous in $[0,T_{\sox}(x))$) for $\Leb{d}$-a.e. $x\in\O$.

Let us fix $x\in\O$ such that $\limsup_{t\uparrow T_{\sox}(x)}V_\O(\XX(t,x))=\infty$
 and $\psi_n(\XX(\cdot, x))$ is uniformly continuous in $[0,T_{\sox}(x))$ for every $n\in \N$. 
The $\limsup$ condition yields that the limit of all $\psi_n(\XX(t,x))$ as $t\uparrow T_{\sox}(x)$
must be 0. On the other hand, if the $\liminf$ of $V_\O(\XX(t,x))$ as $t\uparrow T_{\sox}(x)$ were finite,
we could find an integer $n$ and $t_k\uparrow T_{\sox}(x)$ with $\XX(t_k,x)\in\O_n$ for all $k$. Since $\psi_{n+1}(\XX(t_k,x))=1$ we obtain a contradiction.
\end{Proof}

\begin{remark}\label{rmk:flow-distr-sol}
{\rm
Under the assumptions of the previous theorem, given any probability measure $\mu_0 \leq C\Leb{d}$ for some $C>0$, it can be easily shown that the measure
 \begin{equation}\label{eqn:sol-eq-cont-flow}
 \mu_t := \XX(t,\cdot)_\#(\mu_0\res\{T_{\sXX}>t\}), \qquad t\in[0,T]
 \end{equation}
 is a bounded (by Theorem~\ref{thm:properblo}), weakly* continuous, distributional solution to the continuity equation.
We notice that the same statement is not true if we assume only a local bound on $\div \bb$, since the measure \eqref{eqn:sol-eq-cont-flow} can be locally unbounded, as in the example of Proposition~\ref{prop:no-prop-blowup}, and therefore we cannot write the distributional formulation of the continuity equation.

To see that \eqref{eqn:sol-eq-cont-flow} is a distributional solution of the continuity equation, we consider $\varphi \in C^\infty_c(\R^d)$ and we define the function $g(t,x)$  as $\varphi(\XX(t,x))$ if $t< T_X(x)$ or $t= T_X(x)=T$, and $g(t,x) = 0$ otherwise.
We notice that $g(t,x)$ is absolutely continuous with respect to $t$  for a.e. $x \in \R^d$ and that $\frac{d}{dt}g(t,x) = 1_{\{ T_{\sXX}(x) >t\}}  \nabla \varphi(\XX(t,x)) \bb(t,\XX(t,x))$
for a.e. $t \in (0,T)$, for a.e. $x\in \R^d$. We deduce that the function $t \to \int_{\{T_\sXX>t\}}\varphi(\XX(t,x))\,dx$ is absolutely continuous and its derivative is given by
\begin{equation*}
\begin{split}
\frac{d}{dt} \int_{\{T_\sXX>t\}}\varphi(\XX(t,x))\,d\mu_0(x)
&= 
\frac{d}{dt} \int_{\R^d}g(t,x)\,d\mu_0(x)
\\&
= \int_{\{T_\sXX>t\}}\nabla \varphi(\XX(t,x)) \bb(t,\XX(t,x))\,d\mu_0(x).
\end{split}
\end{equation*}
}\end{remark}

The proper blow up may fail for the maximal regular flow due only to the lack of a global bound on the divergence of $\bb$, as shown in the next example. 

In the following we denote by $\ee_1,\ldots, \ee_d$ the canonical basis of $\R^d$ and  
$B^{(d-1)}_{r} (x')\subset \R^{d-1}$ the ball of center $x' \in \R^{d-1}$ and radius $r$. We denote each point $x\in \R^d$ as $x = (x', x_n)$, where $x'$ are the first $d-1$ coordinates of $x$. For simplicity we write $T_{\sXX}$ for $T_{\R^d,\sXX}$.

\begin{proposition}\label{prop:no-prop-blowup}
Let $d\geq 3 $. There exist an autonomous vector field $\bb:\R^d \to \R^d$ and a Borel set of positive measure $\Sigma\subset \R^d$ such that
$\bb \in W^{1,p}_{\rm loc} (\R^d;\R^d)$ for some $p>1$, $\div \bb \in L^\infty_{\rm loc} (\R^d)$,  
and
\begin{equation}\label{eq:blowup-c}
T_{\sXX}(x)\leq 2, \qquad \liminf_{t\uparrow T_{\sXX}(x)}|\XX(t,x)|= 0, \qquad \limsup_{t\uparrow T_{\sXX}(x)}|\XX(t,x)|=\infty
\end{equation} 
for every $x\in\Sigma$.
\end{proposition}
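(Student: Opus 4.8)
The plan is to build $\bb$ so that it is divergence-free and smooth on $\R^d\setminus\{0\}$, with the origin the only accumulation point of its "active region''. This choice makes everything in the general theory automatic: $\div\bb\equiv 0$ gives $\div\bb\in L^\infty_{\rm loc}$ and \eqref{eqn:incompb-o-L} with $m\equiv 0$; $\bb\in W^{1,p}_{\rm loc}$ for a suitable $p>1$ (to be arranged below) together with Remark~\ref{rmk:manycases} gives (a-$\O$) and (\bbb-$\O$); hence by Theorem~\ref{thm:main-o} there is a unique maximal regular flow $\XX$, and since the classical flow of $\bb$ on $\R^d\setminus\{0\}$ is smooth and volume preserving (hence a regular flow in the sense of Definition~\ref{def:maxflow-o}), $\XX$ coincides a.e.\ with it as long as the trajectory avoids the origin. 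We also fix the time horizon $T=3$, say, and $V_{\R^d}(x)=|x|$, so that condition (iii) reads $\limsup_{t\uparrow T_{\sXX}(x)}|\XX(t,x)|=\infty$.

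Concretely, I would fix sequences $\rho_k\downarrow 0$, $R_k\uparrow\infty$, tube thicknesses $\delta_k\downarrow 0$ and fluxes $\Phi_k>0$, and construct a single smooth non-self-intersecting arc $\gamma\subset\R^d\setminus\{0\}$ whose $k$-th "leg'' runs (monotonically in the radial coordinate) from the sphere $\{|x|=\rho_k\}$ out to $\{|x|=R_k\}$, the following leg back from $\{|x|=R_k\}$ to $\{|x|=\rho_{k+1}\}$, and so on. Such an arc exists precisely because $d\ge 3$: in the plane the $k$-th leg, together with a short path near the origin, would bound a region that the next leg—which must start near $0$, inside that region, and reach radius $R_{k+1}>R_k$, outside it—is forced to cross, so the legs could not be disjoint; for $d\ge 3$ there is room to route them disjointly. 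Take an embedded tubular neighbourhood $\mathcal T=\bigcup_k\mathcal T_k$ with $\mathcal T_k$ of thickness $\sim\delta_k$ on the $k$-th leg and contained in the corresponding radial annulus, and put on $\mathcal T$ a divergence-free vector field which on $\mathcal T_k$ has flux $\Phi_k$ through every cross-section, is tangent to $\partial\mathcal T$, vanishes there to infinite order, and whose velocity profiles on the interface discs match between consecutive legs. Extended by $0$ outside $\mathcal T$ this gives a field $\bb$ smooth on $\R^d\setminus\{0\}$ and divergence-free.

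The rest is a bookkeeping of scales. The time a trajectory inside $\mathcal T_k$ needs to traverse the $k$-th leg equals $\vol(\mathcal T_k)/\Phi_k\sim R_k\delta_k^{d-1}/\Phi_k$, while inside $\mathcal T_k$ one has $|\bb|\sim\Phi_k\delta_k^{-(d-1)}$ and $|\nabla\bb|\sim\Phi_k\delta_k^{-d}$. Choosing, for instance, $R_k=2^k$, $\rho_k\sim\delta_k=2^{-\alpha k}$, $\Phi_k=2^{-\beta k}$, one checks that the total time $T_*:=\sum_k\vol(\mathcal T_k)/\Phi_k$ is finite and that $\int_K(|\bb|^p+|\nabla\bb|^p)\,dx<\infty$ for every compact $K\subset\R^d$ (the only delicate region being a neighbourhood of $0$, where the relevant series is $\sum_k R_k\Phi_k^p\delta_k^{d-1-dp}$), provided $1<p<d-1$ and the exponents are tuned so that $p+1<\alpha(d-1-p)$ with $\beta$ in the resulting nonempty window; after rescaling the $\Phi_k$'s one may also ensure $T_*\le 2$. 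With these choices, let $\Sigma$ be a small ball of initial points inside $\mathcal T_1$ near the inlet of the first leg. By volume preservation the image of $\Sigma$ stays of positive measure, and each trajectory $\XX(\cdot,x)$, $x\in\Sigma$, traverses the legs in order: it is at radius $\sim\rho_k$ at the junction time $t_{2k}$ and at radius $\sim R_k$ at the time $t_{2k+1}$, with $t_{2k},t_{2k+1}\uparrow T_*$. Since these trajectories never reach $0$ (where $\bb$ is smooth) and cannot be continued past $T_*$, we get $T_{\sXX}(x)=T_*\le 2$, $\liminf_{t\uparrow T_{\sXX}(x)}|\XX(t,x)|=0$ and $\limsup_{t\uparrow T_{\sXX}(x)}|\XX(t,x)|=\infty$ for every $x\in\Sigma$, i.e.\ \eqref{eq:blowup-c}.

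The main obstacle is the geometric–analytic construction of $\gamma$, $\mathcal T$ and $\bb$: making $\gamma$ genuinely non-self-intersecting with a tubular neighbourhood that respects the prescribed annuli, equipping it with a divergence-free field that has matching interface profiles and glues smoothly to the zero field, and—above all—reconciling the two competing demands, finiteness of the total transit time versus $W^{1,p}_{\rm loc}$ integrability of $\bb$ near the origin, which is exactly where the hypothesis $d\ge 3$ (equivalently, the existence of an admissible exponent $p\in(1,d-1)$) enters.
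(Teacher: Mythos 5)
Your proposal is correct and follows essentially the same strategy as the paper: the published proof is a concrete realization of your abstract scheme, taking the $k$-th leg to be a straight cylinder of radius $a_k$ parallel to $\ee_d$ at horizontal offset $2^{-k}$ from the axis (so disjointness of the tubes is immediate for $d\geq 3$), with speed $4^k$ and height $\sim 2^k$ so that the transit times sum to at most $2$, and with $a_k$ decaying fast enough that $\bb$ is locally $W^{1,p}$ for $p<d-1$ --- exactly your balance of scales. The one point where the paper is more careful than your sketch is that $\Sigma$ must be chosen so that its trajectories remain in the inner cores $E_k'$ (where the cut-off profile equals $1$ and the speed is bounded below), since trajectories near the tube boundary travel arbitrarily slowly; your requirement that the interface velocity profiles match plays the role of the paper's handles $F_k'$ connecting consecutive inner cores.
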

\begin{Proof}
\begin{figure}[]
\centering
\includegraphics[scale=0.40]{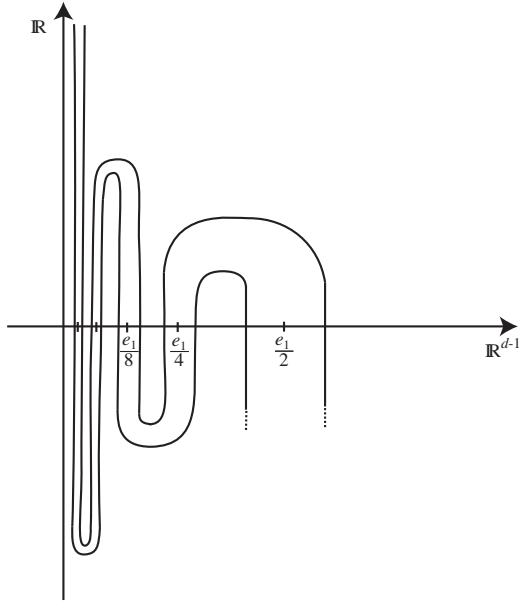}
\caption{The trajectories of $\bb$ oscillate between $0$ and $\infty$.}
\label{fig:traiett-campo}
\end{figure} 
We build a vector field whose trajectories are represented in Figure~\ref{fig:traiett-campo}.

Let $\{a_k\}_{k\in \N}$ be a fastly decaying sequence to be chosen later. For every $k=1,2,\ldots$ we define the cylinders 
\begin{equation*}
E_k  =
\begin{cases}
B^{(d-1)}_{a_k} ( 2^{-k}\ee_1) \times [-2^{k-1},2^{k}]
 \qquad&\text{if $k$ is odd}\\
B^{(d-1)}_{a_k} ( 2^{-k}\ee_1) \times [-2^k,2^{k-1}]
\qquad&\text{if $k$ is even.}
\end{cases}
\end{equation*}
We also define
$$E_0= B^{(d-1)}_{a_1} ( 2^{-1}\ee_1) \times (-\infty,-1].$$
Let $\varphi \in C^\infty_0 (B^{(d-1)}_1)$ be a nonnegative cutoff function which is equal to $1$ in $B_{1/2}$. In every $E_k$ the vector field $\bb$ points in the $d$-th direction and it depends only on the first $d-1$ variables
\begin{equation}
\label{defn:b-cilindri}
\bb (x) :=
\begin{cases}
\displaystyle{(-1)^{k+1} 4^k \varphi\Big( \frac{x'-2^{-k}\ee_1}{a_k} \Big) \ee_d }\qquad &\forall \,x\in E_k,\,\,k\geq 1
\\
\displaystyle{4 \varphi\Big( \frac{x'-2^{-1}\ee_1}{a_1} \Big) \ee_d} \qquad &\forall \,x\in E_0.
\end{cases}
\end{equation}
Notice that $\div \bb =0$ in every $E_k$ and that $\bb$ is $0$ on the lateral boundary of every cylinder $E_k$ since $\varphi$ is compactly supported.

For every $k\geq 1$ we define the cylinders $E_k' \subset \R^d$ as
\begin{equation*}
E_k'  =
\begin{cases}
B^{(d-1)}_{a_k/2} ( 2^{-k}\ee_1) \times [-2^{k-1},2^{k}]
 \qquad&\text{if $k$ is odd}\\
B^{(d-1)}_{a_k/2} ( 2^{-k}\ee_1) \times [-2^k,2^{k-1}]
\qquad&\text{if $k$ is even.}
\end{cases}
\end{equation*}
For every $k \in \N$ we define a handle $F_k$ which connects $E_k$ with $E_{k+1}$ as in Figure~\ref{fig:campo-EF}. It is made of a family of smooth, nonintersecting curves of length less than $1$ which connect the top of $E_k$ to the top of $E_{k+1}$ and $E_k'$ with $E_{k+1}'$.
We denote by $F_k'$ the handle between $E_k'$ and $E_{k+1}'$, as in Figure~\ref{fig:campo-EF}.

The vector field $\bb$ is extended to be $0$ outside $\cup_{k=0}^\infty (E_k \cup F_k)$. It is extended inside every $F_k$ by choosing a smooth extension in a neighborhood of each handle, whose trajectories are the ones described by the handle. The modulus of $\bb$ is chosen to be between $4^k$ and $4^{k+1}$ in $F_k'$ (notice that $|\bb(x)| = 4^k$ on the top of $E_{k}'$ thanks to \eqref{defn:b-cilindri}).

With this choice, every trajectory in $F_k'$ is not longer than $1$ and the vector field $\bb$ is of size $4^k$. We deduce that the handle is covered in time less than $4^{-k}$.

\begin{figure}[]
\hspace{-9em} \includegraphics[scale=0.62]{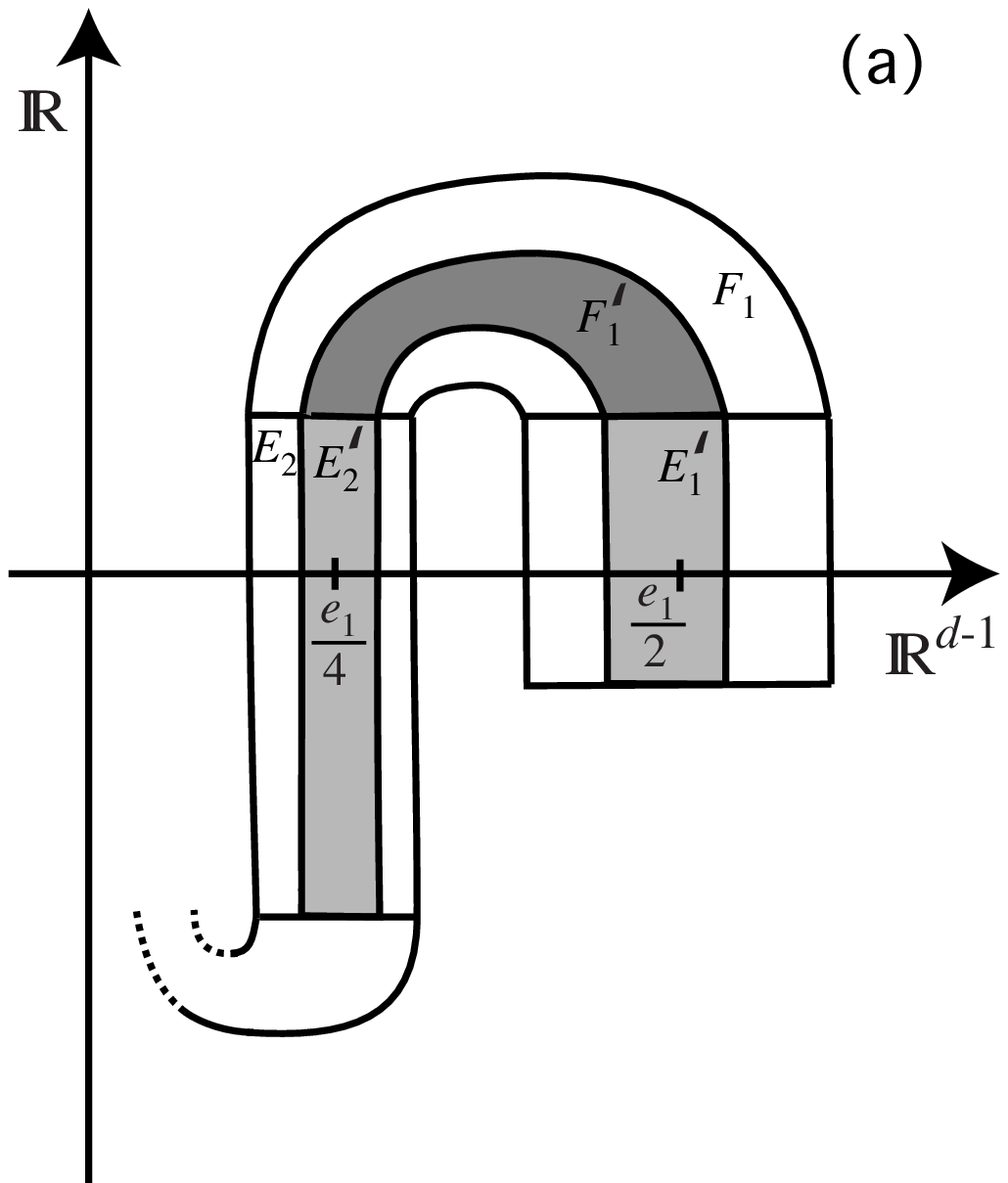}
\hspace{1.5em} \includegraphics[scale=0.62]{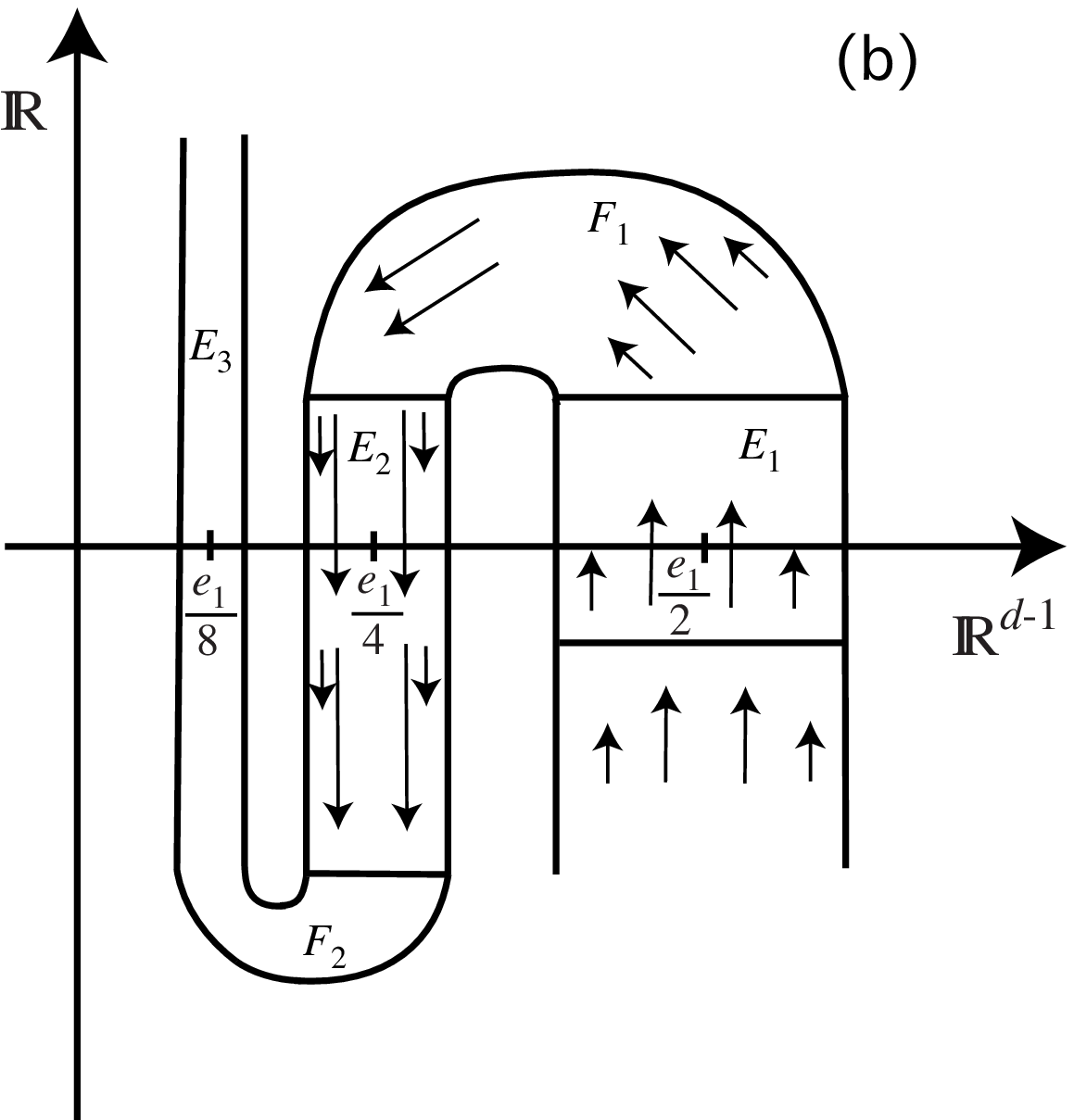}
\caption{The sets $E_k$, $F_k$, $E_k'$, and $F_k'$ and the vector field $\bb$.}
\label{fig:campo-EF}
\end{figure} 

By the construction it is clear that $\bb$ is smooth in $\R^d\setminus \R \ee_d$. We show that $\bb\in W^{1,p}_{\rm loc} (\R^d;\R^d)$ for some $p>1$ by estimating the $W^{1,p}$ norm of $\bb$ in every ball $B_R$. With this estimate, one can easily see that $\bb$ is the limit of smooth vector fields with bounded $W^{1,p}$
norms on $B_R$; it is enough to consider the restriction of $\bb$ to the first $n$ sets $E_k \cup F_k$.

Fix $R>0$. The $W^{1,p}$ norm of $\bb$ in $B_R$ is estimated by 
\begin{equation}
\label{eqn:norma-p-b}
\| \bb \|_{W^{1,p}(B_R)} \leq
\| \bb \|_{W^{1,p}(E_0 \cap B_R)} + \sum_{k=1}^{\infty} \| \bb \|_{W^{1,p}(F_k \cap B_R)} + 
\sum_{k=1}^\infty \| \bb \|_{W^{1,p}(E_k)}.
\end{equation}
The first term is obviously finite (depending on $R$); 
since $B_R$ intersects at most finitely many $F_k $, the second sum in the right-hand side of 
\eqref{eqn:norma-p-b} has only finitely many nonzero terms. As regards the third sum, we compute the 
$W^{1,p}$ norm of $\bb$ in each set $E_k$. For every $k \in \N$
$$\| \bb \|_{L^{p}(E_k)}
\leq
4^k (2R)^{1/p} \Big\| \varphi\Big( \frac{x'-2^{-k}\ee_1}{a_k} \Big)\Big \|_{L^p\big(B^{(d-1)}_{a_k} ( 2^{-k}\ee_1)\big) } 
= 
4^{k} (2Ra_k^{d-1})^{1/p} \|\varphi\|_{L^p(B^{(d-1)}_1)}
$$
and similarly
$$ \| \nabla \bb \|_{L^{p}(E_k)}
\leq
\frac{4^k (2R)^{1/p}}{a_k} \Big\|\nabla \varphi\Big( \frac{x'-2^{-k}\ee_1}{a_k} \Big)\Big \|_{L^p\big(B^{(d-1)}_{a_k} ( 2^{-k}\ee_1)\big) } 
= \frac{4^{k} (2Ra_k^{d-1})^{1/p}}{a_k} \|\nabla \varphi\|_{L^p(B^{(d-1)}_1)}. 
$$
Since $a_k\leq 1$, the series in the right-hand side of \eqref{eqn:norma-p-b} is estimated by
$$\sum_{k=1}^\infty \| \bb \|_{W^{1,p}(E_k)} \leq C(R, \varphi) \sum_{k=1}^\infty
4^{k} a_k^{(d-1)/p-1}
$$
and it is convergent for every $p <d-1$ provided that we take $a_k \leq 8^{-pk/(d-1-p)}$. Hence $\bb \in W^{1,p}(B_R;\R^d)$ for every $R>0$.

To check that $\div \bb \in L^\infty_{\rm loc} (\R^d)$, we notice that $\bb$ is divergence free in $\R^d \setminus\cup_{k=0}^\infty F_k$ and that for every $R>0$ the ball $B_R$ intersects only finitely many handles $F_{k}$; in particular $\bb$ is divergence free in $B_1$. Since $\bb$ is smooth in a neighbourhood of each handle, we deduce that $\div \bb$ is bounded in every $B_R$.

Finally we set $\Sigma=B_{a_1/2} (\ee_1/2) \times [0,1]$ and we show that for every $x\in\Sigma$ the smooth trajectory of $\bb$ starting from $x$ satisfies \eqref{eq:blowup-c}.
The trajectory of $x$ lies by construction in $\cup_{k=0}^\infty (E_k' \cup F_k')$. For every $k\in\N$, the time requested to cross the set $E_k'$ is $2^k/4^k$ and, as observed before, the time requested to cross $F_k'$ is less than $4^{-k}$. Hence 
$$T_{\sXX}(x) \leq \sum_{k=1}^\infty \frac{2^{k}+1}{4^k} \leq 2 \qquad \forall \,x\in E.$$
The other properties in \eqref{eq:blowup-c} are satisfied by construction.
\end{Proof}

In dimension $d=2$, thanks to the smoothness of the vector field built in the previous example outside the $x_2$-axis, there exists only an integral curve of $\bb$ for every $x\in \R^2 \setminus \{x_1=0\}$. Hence, thanks to the superposition principle the previous example satisfies the assumption (\bbb-$\O$) on $\bb$ and therefore provides a two-dimensional counterexample to the proper blow-up of trajectories.
On the other hand, the vector field built in the previous example is not in $BV_{\rm loc}(\R^2;\R^2)$.
We show indeed in the next proposition that for any autonomous $BV_{\rm loc}$ vector field in dimension $d=2$ the behavior of the previous example 
(see Figure~\ref{fig:traiett-campo}) cannot happen and the trajectories must blow up properly. It looks likely that, with $d=2$ and a nonautonomous vector field, one can build an example following the lines of the example in Proposition~\ref{prop:no-prop-blowup}.

\begin{proposition}\label{prop:BV-prop-blowup}
Let  $\bb \in BV_{\rm loc} (\R^2;\R^2)$, $\div \bb \in L^\infty_{\rm loc} (\R^2)$. Then
\begin{equation}\label{eq:blowup-impr}
\liminf_{t\uparrow T_{\sXX}(x)}|\XX(t,x)|= \infty \qquad\text{for $\Leb{2}$-a.e. $x\in\R^2$ such that }\limsup_{t\uparrow T_{\sXX}(x)}|\XX(t,x)|=\infty.
\end{equation} 
\end{proposition}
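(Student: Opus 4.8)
\emph{Plan of proof.} The idea is to reduce the statement, by a cut-off argument, to a bounded compression estimate on annuli, and then to isolate the genuinely two-dimensional point. Fix $R>0$ and a function $\phi_R\in C^\infty_c(B_{2R})$ with $0\le\phi_R\le1$ and $\phi_R\equiv1$ on $B_R$. For $\Leb{2}$-a.e.\ $x$ the map $t\mapsto\phi_R(\XX(t,x))$ is absolutely continuous on compact subintervals of $[0,T_{\sXX}(x))$, with derivative $\nabla\phi_R(\XX(t,x))\cdot\bb(t,\XX(t,x))$ for a.e.\ $t$; hence it admits a limit as $t\uparrow T_{\sXX}(x)$ whenever $\int_0^{T_{\sXX}(x)}|\nabla\phi_R(\XX(t,x))\cdot\bb(t,\XX(t,x))|\,dt<\infty$. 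If this holds for every $R\in\N$ and $\Leb{2}$-a.e.\ $x$, then a trajectory with $\limsup_{t\uparrow T_{\sXX}(x)}|\XX(t,x)|=\infty$ must also satisfy $\liminf_{t\uparrow T_{\sXX}(x)}|\XX(t,x)|=\infty$, which is \eqref{eq:blowup-impr}. Integrating in $x$ and using Fubini together with the definition of the push-forward, $\int_{\R^2}\int_0^{T_{\sXX}(x)}|\nabla\phi_R(\XX(t,x))\cdot\bb(t,\XX(t,x))|\,dt\,dx$ is bounded by $\|\nabla\phi_R\|_\infty\int_0^T\int_{\{R\le|y|\le 2R\}}|\bb(t,y)|\,d\mu_t(y)\,dt$, where $\mu_t:=\XX(t,\cdot)_\#(\Leb{2}\res\{T_{\sXX}>t\})$. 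Since $\bb\in L^1_{\rm loc}$ by (a-$\O$), it then suffices to prove that $\XX(t,\cdot)_\#(\Leb{2}\res\{T_{\sXX}>t\})\res\{R\le|y|\le 2R\}\le C\Leb{2}$ with $C$ independent of $t\in[0,T)$.

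This uniform bounded compression on annuli is the crux, and it is exactly here that $d=2$ and the $BV$ hypothesis are essential: in dimension $d\ge 3$ the corresponding bound fails, as Proposition~\ref{prop:no-prop-blowup} shows. It is immediate for the part of $\mu_t$ carried by trajectories that have not yet exited a fixed ball $B_N\supset B_{2R}$, by the local bounded compression estimate of Theorem~\ref{thm:main-o}(a) with constant $e^{L(B_N,\sbb)}$. The difficulty is the mass of trajectories that leave $B_N$ and later re-enter the annulus $\{R\le|y|\le 2R\}$, which a priori could accumulate there infinitely often.

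To control it I would use that $\bb$ is autonomous. Together with uniqueness of trajectories forward and backward in time --- which holds because $\bb$ and $-\bb$ are $BV_{\rm loc}$ with locally bounded divergence, so that (b-$\O$) applies (see Remark~\ref{rmk:manycases}) --- this forces, for $\Leb{2}$-a.e.\ $x$, the curve $\XX(\cdot,x)$ to be injective on $[0,T_{\sXX}(x))$ (a non-injective integral curve of an autonomous field would be periodic, hence bounded), and likewise the map $\XX(t,\cdot)$ to be injective on $\{T_{\sXX}>t\}$. Planar topology then enters: each excursion of $\XX(\cdot,x)$ outside $B_{2R}$ is a compact arc with both endpoints on $\partial B_{2R}$ which, together with a boundary arc, bounds a Jordan domain, and injectivity of the flow constrains how these domains can be nested and how many times nearby trajectories can re-enter the annulus. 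Alternatively, one can use the decomposition $\bb=\nabla^\perp H+\nabla u$ with $\Delta u=\div\bb\in L^\infty_{\rm loc}$, so that $u\in C^{1,\alpha}_{\rm loc}$ with $\nabla u\in L^\infty_{\rm loc}$ and $\nabla H=\bb-\nabla u\in BV_{\rm loc}$; along a trajectory $\frac{d}{dt}H(\XX(t,x))=\nabla H(\XX)\cdot\nabla u(\XX)$, so $H$ is conserved up to an error governed by $\nabla u$, and the fine structure of level sets of planar $BV$ Hamiltonians (the weak Sard property, cf.\ \cite{abc1}) confines each trajectory to a thin neighbourhood of a single level curve of $H$; since a level curve escaping to infinity does not return, re-entry can only happen finitely often, up to the controlled $\nabla u$-error. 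Turning either heuristic into the quantitative, uniform-in-$t$ density bound above is the main obstacle; granted it, the reduction of the first paragraph yields \eqref{eq:blowup-impr} for $\Leb{2}$-a.e.\ $x\in\R^2$.
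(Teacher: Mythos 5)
Your proposal does not close the argument, and the step you yourself flag as ``the main obstacle'' is a genuine gap, not a technicality. Your reduction in the first paragraph requires the uniform-in-$t$ bound $\XX(t,\cdot)_\#(\Leb{2}\res\{T_{\sXX}>t\})\res\{R\le|y|\le 2R\}\le C\Leb{2}$. Nothing in the hypotheses gives this: with only $\div\bb\in L^\infty_{\rm loc}$ the compression constants $e^{L(\O_n,\bb)}$ degenerate as $\O_n\uparrow\R^2$, and trajectories entering the annulus may have travelled through regions of arbitrarily negative divergence, so the push-forward measure can a priori be locally unbounded (this is exactly the point of Remark~\ref{rmk:flow-distr-sol}; the example of Proposition~\ref{prop:no-prop-blowup} realizes it in $d\ge 3$, and neither of your two heuristics --- injectivity plus planar topology, or the Hamiltonian decomposition with the weak Sard property --- is developed into an actual proof that $d=2$ and $BV$ regularity restore such a bound). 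In effect you are trying to verify the hypothesis \eqref{eq:pasquetta1} of Theorem~\ref{thm:properblo} on annuli, which is a much stronger statement than the conclusion and is precisely what is \emph{not} available in this proposition.

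The paper's proof avoids any density estimate and uses the two-dimensionality in a completely different way: the restriction of a $BV(\R^2;\R^2)$ field to $\Leb{1}$-a.e.\ circle $\partial B_r$ is a one-dimensional $BV$ function, hence essentially bounded, and Step~1 gives $\int_R^{R+1}\esssup_{\partial B_r}|\bb|\,dr\le \frac{1}{2\pi R}\int_{B_{R+1}\setminus B_R}|\bb|\,dx+|D\bb|(B_{R+1}\setminus B_R)<\infty$. Step~2 then shows, via the monotone envelope $\sigma(t)=\max_{s\le t}|\gamma(s)|$ and Cauchy--Schwarz, that any integral curve (of a suitably chosen precise representative, legitimate by Remark~\ref{rem:invaria_flow}) needs time at least $\bigl(\int_R^{R+1}\esssup_{\partial B_r}|\bb|\,dr\bigr)^{-1}>0$ to cross the annulus $B_{R+1}\setminus B_R$. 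A trajectory with finite $\liminf$ and infinite $\limsup$ would have to make infinitely many such crossings before the finite time $T_{\sXX}(x)$, which is impossible. If you want to salvage your write-up, this crossing-time estimate is the ingredient to aim for instead of the compression bound; your cut-off reduction can then be discarded entirely.
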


\begin{figure}[]
\centering 
\includegraphics[scale=0.50]{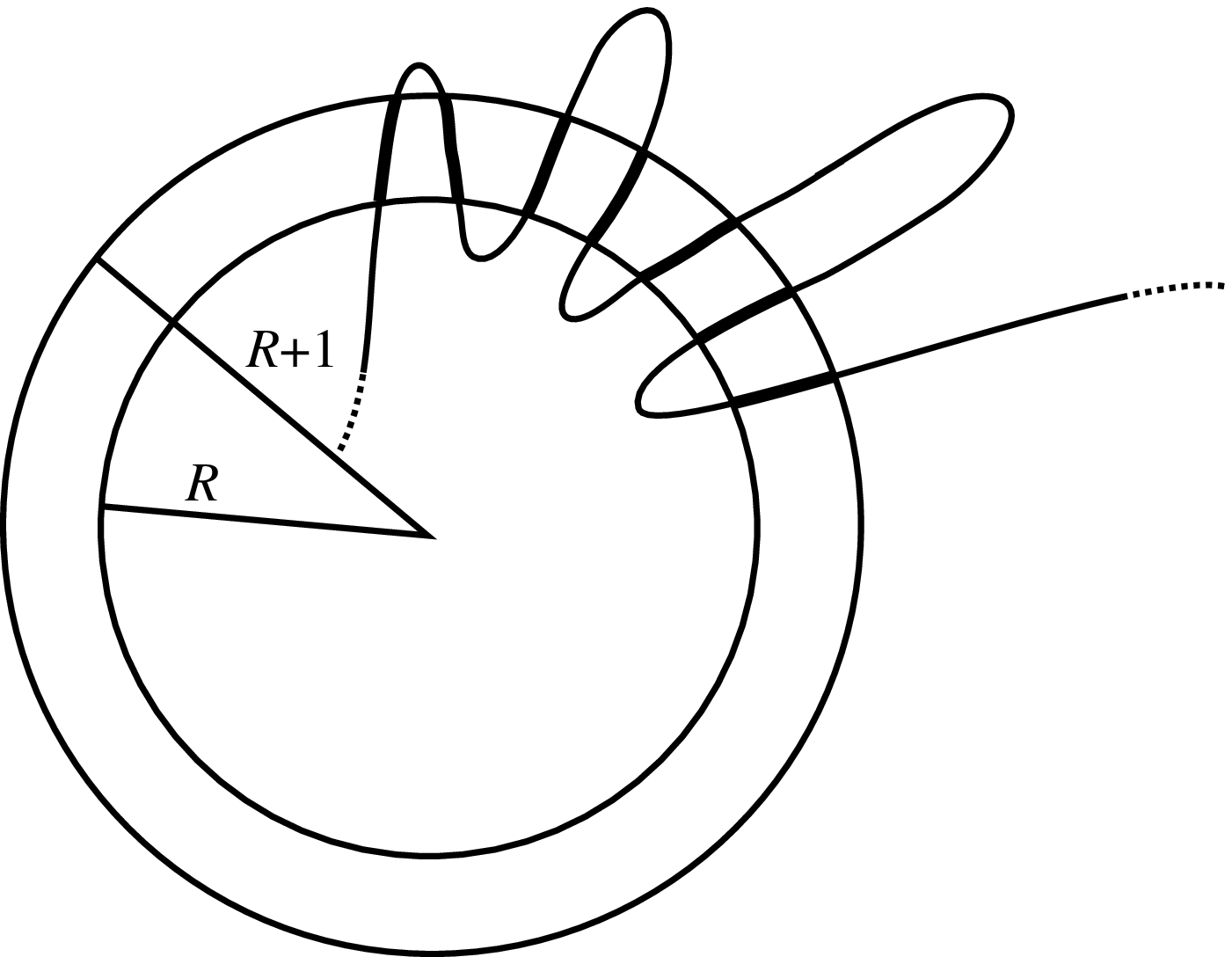}
\caption{For an autonomous vector field $\bb$ in the plane, we consider an integral curve of a suitable representative of $\bb$, namely a vector field which coincides $\Leb{2}$-a.e. with $\bb$. Given $R>0$, the time needed for the integral curve to cross the annulus $B_{R+1} \setminus B_R$ is greater or equal than the constant $\| \esssup_{\partial B_r} |\bb| \|_{L^1(R, R+1)}^{-1}$ (see \eqref{eqn:crossing-time} below).
For this reason, every trajectory can cross only finitely many times the annulus in finite 
time and therefore every unbounded trajectory must blow up properly, as in \eqref{eq:blowup-c}.}
\label{fig:dim2}
\end{figure} 

\begin{Proof}
\noindent {\bf Step 1.} Let $R>0$. We prove that for every vector field $\bb \in BV_{\rm loc} (\R^2;\R^2)$ 
\begin{equation}
\label{eqn:tv-control}
\int_{R}^{R+1} \esssup_{x\in \partial B_r}| \bb(x) | \, dr
\leq 
\frac{1}{2 \pi R}
 \int_{B_{R+1}\setminus B_R} |\bb(x)| \, dx
 +
 |D\bb|(B_{R+1}\setminus B_R).
\end{equation}
For this, let $\bb_\eps$ be a sequence of smooth vector fields which approximate $\bb$ in $BV(B_{R+1} \setminus B_R)$, namely
$$
\lim_{\eps \to 0} |\bb_\eps-\bb|=0 \qquad \mbox{in } L^1(B_{R+1} \setminus B_R), \qquad 
\lim_{\eps \to 0}\int_{B_{R+1} \setminus B_R} |\nabla \bb_\eps(x)| \, dx
=  |D\bb|(B_{R+1}\setminus B_R).
$$
Up to a subsequence (not relabeled) we deduce that for a.e. $r\in (R,R+1)$
$$ \lim_{\eps \to 0} \bb_\eps = \bb \qquad \mbox{in } L^1(\partial B_r;\R^2).$$
Since we can control the supremum of the one dimensional restriction of $\bb_\eps$ to $\partial B_r$ through the $L^1$ norm of $\bb_\eps$ 
and the total variation we have that
$$ \sup_{x\in \partial B_r}| \bb_\eps(x) |
\leq 
\frac{1}{2 \pi r}
 \int_{\partial B_r} |\bb_\eps(x)| \, dx
 +
\int_{\partial B_r} |\nabla \bb_\eps(x)| \, dx.
$$
Hence, integrating with respect to $r$ in $(R,R+1)$, \eqref{eqn:tv-control} holds for $\bb_\eps$: 
$$\int_{R}^{R+1} \sup_{x\in \partial B_r}| \bb_\eps(x) | \, dr
\leq 
\frac{1}{2 \pi R}
 \int_{B_{R+1}\setminus B_R} |\bb_\eps(x)| \, dx
 +
\int_{B_{R+1} \setminus B_R} |\nabla \bb_\eps(x)| \, dx.$$
 Taking the $\liminf$ in both sides as $\eps$ goes to $0$, by Fatou lemma we deduce that
 \begin{equation*}
 \begin{split}
 \int_{R}^{R+1} \esssup_{x\in \partial B_r}| \bb(x) | \, dr
&\leq 
\int_{B_{R+1}\setminus B_R} \liminf_{\eps \to 0} \sup_{x\in \partial B_r}| \bb_\eps(x) | \, dr
\\
&\leq
 \liminf_{\eps \to 0}
\int_{R}^{R+1} \sup_{x\in \partial B_r}| \bb_\eps(x) | \, dr
\\
&\leq
 \lim_{\eps \to 0} \Big(
\frac{1}{2 \pi R}
 \int_{B_{R+1} \setminus B_R}  |\bb_\eps(x)| \, dx
 +
\int_{B_{R+1} \setminus B_R} |D\bb_\eps(x)| \, dx
 \Big)
\\
&=
  \frac{1}{2 \pi R}
\int_{B_{R+1} \setminus B_R}  |\bb(x)| \, dx
 +
 |D\bb|(B_{R+1}\setminus B_R).
 \end{split}
 \end{equation*}

\smallskip
\noindent {\bf Step 2.}
Let $R>0$ and let $\cc: \R^2 \to \R^2$ be a Borel vector field such that
$$f(r) := \sup_{x\in \partial B_r} |\cc(x)| \in L^1(R,R+1).$$
Let $\gamma: [0,\tau] \to \overline{B}_{R+1}\setminus B_R$ be an absolutely continuous integral curve of $\cc$ (namely $\dot \gamma = \cc(\gamma)$ 
$\Leb{1}$-a.e. in $(0,\tau)$) such that $\gamma(0) \in \partial B_R$ and $\gamma(\tau) \in \partial B_{R+1}$. We claim that
\begin{equation}
\label{eqn:crossing-time}
\tau \geq \Big( \int_R^{R+1} f(r) \, dr \Big)^{-1}.
\end{equation}
To prove this, 
we define the nondecreasing function $\sigma: [0,\tau ] \to \R$
\begin{equation}
\label{defn:sigma} \sigma (t) = \max_{s\in [0,t]} |\gamma(s)| \qquad \forall \,t \in [0,\tau];
\end{equation}
we have that $\sigma (0)= R$ and $\sigma (\tau) = R+1$.
For every $s,\,t \in [0, \tau]$ with $s<t$ there holds
$$0\leq \sigma(t)-\sigma(s) \leq \sup_{r\in (s,t]}(|\gamma(r)|-|\gamma(s)|)^+
\leq\int_s^t \bigg|\frac{d}{dr} |\gamma(r)| \bigg|\, dr \leq \int_s^t |\dot \gamma(r)| \, dr.
$$
Thus $\sigma$ is absolutely continuous and $\dot \sigma\leq |\dot\gamma|$ $\Leb{1}$-a.e in $(0,\tau)$.
In addition, for every $t\in (0,\tau)$ such that $\sigma(t) \neq |\gamma(t)|$ the function $\sigma$ is constant in a neighborhood of $t$, hence
$\dot \sigma\leq\chi_{\{ \sigma = |\gamma|\}}|\dot\gamma|$ $\Leb{1}$-a.e. in $(0,\tau)$.
Therefore
$$\dot \sigma(t) \leq 1_{\{ \sigma = |\gamma|\}}(t) |\dot \gamma(t)| 
= 1_{\{ \sigma = |\gamma|\}}(t) |\cc(\gamma(t))|
\leq f(\sigma(t))\qquad\text{for $\Leb{1}$-a.e. $t\in (0,\tau)$.}
$$
By H\"older inequality and the change of variable formula we deduce that
\begin{equation*}
\begin{split}
1 &\leq [\sigma(\tau)-\sigma(0)]^2 \leq \Big( \int_0^\tau \dot\sigma(t) \, dt \Big)^2
 \leq \tau \int_0^\tau [\dot \sigma(t)]^2 \, dt
\\
&\leq
\tau \int_0^\tau \dot \sigma(t)f(\sigma(t)) \, dt
=
\tau \int_R^{R+1} f(\sigma) \, d\sigma,
\end{split}
\end{equation*}
which proves \eqref{eqn:crossing-time}.

\smallskip
\noindent {\bf Step 3.} We conclude the proof. Using the invariance of the concept of maximal regular flow (see Remark~\ref{rem:invaria_flow})
we can work with a well-chosen representative which allows us to apply the estimate in Step 2. For this specific representation of $\bb$, we show that {\it every} integral unbounded trajectory blows up properly.  

For $\Leb{d}$-a.e. $r>0$ the restriction $\bb_r(x) = \bb( rx)$, $x\in {\mathbb S}^1$, of the vector field $\bb$ to $ \partial B_r$ is $BV$.
We remind that every 1-dimensional $BV$ function has a precise representative given at every point by the average of the 
right approximate limit and of the left approximate limit, which exist everywhere. We define the Borel vector field $\cc:\R^2 \to \R$ as
$$\cc(rx) = \text{the precise representative of $\bb_r$ at $x$} \qquad \forall \,x\in {\mathbb S}^1$$
for all $r$ such that $\bb_r\in BV({\mathbb S}^1)$, and $0$ otherwise.
Notice that, by Fubini theorem, $\cc$ coincides $\Leb{2}$-a.e. with $\bb$, and that
$\sup|\cc(r\cdot)|\leq\esssup |\bb(r\cdot)|$ for all $r>0$.

Let us assume by contradiction the existence of $\bar x\in \R^d$ such that $\XX(\cdot, \bar x)$ is an integral curve of the precise representative $\cc$ and
\begin{equation}\label{eq:blowup-contr}
\liminf_{t\uparrow T_{\sXX}(\bar x)}|\XX(t,\bar x)|< \infty, \qquad \limsup_{t\uparrow T_{\sXX}(\bar x)}|\XX(t,\bar x)|=\infty.
\end{equation}
We fix $R>0$ greater than the $\liminf$ in \eqref{eq:blowup-contr}, as in Figure~\ref{fig:dim2} and we define $f(r) := \sup_{x\in \partial B_r} |\cc(x)|$, $r\in [R,R+1]$. Thanks to \eqref{eqn:tv-control} applied to $\cc$, we deduce that $f \in L^1(R,R+1)$. 
Therefore we can apply Step 2 to deduce that every transition from inside $B_R$ to outside $B_{R+1}$ requires at least time 
$1/\|f\|_{L^1(R,R+1)}>0$.
Hence the trajectory $\XX(\cdot , \bar x)$ can cross the set $B_{R+1} \setminus B_R$ only finitely many times in finite time, a contradiction.
\end{Proof}

\subsection{No blow-up criteria}

If one is interested in estimating the blow-up time $T_{\sox}$ of the maximal regular flow, or even if one wants to rule out the
blow up, one may easily adapt to this framework the classical criterion based on the existence of a Lyapunov function $\Psi:\R^d\to [0,\infty]$ satisfying
$\Psi(z)\to \infty$ as $|z|\to\infty$ and
$$ 
\frac{d}{dt}\Psi(x(t))\leq C_\Psi\bigl(1+\Psi(x(t))\bigr)
$$
along absolutely continuous solutions to $\dot x=\bb(t,x)$.
On the other hand, in some cases, by a suitable approximation argument one can exhibit a solution $\mu_t=\rho_t\Leb{d}$ to the continuity equation with
velocity field $\bb$ with $|\bb_t|\rho_t$ integrable. As in \cite[Proposition~8.1.8]{amgisa} (where locally Lipschitz vector fields were considered)
we can use the existence of this solution to rule out the blow-up.

In the next theorem we provide a sufficient condition for the continuity of $\XX$ at the blow-up time, using a global
version of (a-$\O$) and the global bounded compression condition \eqref{eq:pasquetta1}, implied by the global
bound on divergence \eqref{eqn:incompb-o-g}.

\begin{theorem} \label{thm:pasquetta} Let $\bb\in L^1((0,T)\times\O;\R^d)$ satisfy (\bbb-$\O$)
and assume that the maximal regular flow $\XX$ satisfies \eqref{eq:pasquetta1}. Then $\XX(\cdot,x)$ is absolutely continuous in
$[0,T_{\sox}(x)]$ for $\Leb{d}$-a.e. $x\in\O$, and the limit of $\XX(t,x)$ as $t\uparrow T_{\sox}(x)$
belongs to $\partial\Omega$ whenever $T_{\sox}(x)<T$.
\end{theorem}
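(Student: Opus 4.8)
The plan is to leverage the \emph{global} bounded compression \eqref{eq:pasquetta1}, together with the \emph{global} integrability $\bb\in L^1((0,T)\times\O;\R^d)$, to bound the total length of the trajectories on their whole interval of existence, after integrating in the initial point. Concretely, for $\Leb{d}$-a.e. $x\in\O$ the curve $\XX(\cdot,x)$ solves $\dot\XX(t,x)=\bb(t,\XX(t,x))$ for $\Leb{1}$-a.e. $t\in(0,T_{\sox}(x))$, so that $|\dot\XX(t,x)|=|\bb(t,\XX(t,x))|$ there; recalling also that $\XX(t,x)\in\O$ for every $t<T_{\sox}(x)$, I would apply Tonelli's theorem to the nonnegative Borel integrand $(t,x)\mapsto|\bb(t,\XX(t,x))|$, extended by $0$ when $t\ge T_{\sox}(x)$, and then use \eqref{eq:pasquetta1} slicewise in $t$ to obtain
\begin{equation*}
\int_\O\int_0^{T_{\sox}(x)}|\dot\XX(t,x)|\,dt\,dx
=\int_0^T\int_{\{T_{\sox}>t\}}|\bb(t,\XX(t,x))|\,dx\,dt
\le C_*\int_0^T\int_\O|\bb(t,y)|\,dy\,dt<\infty.
\end{equation*}

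From this estimate, $\int_0^{T_{\sox}(x)}|\dot\XX(t,x)|\,dt<\infty$ for $\Leb{d}$-a.e. $x\in\O$. Together with the local absolute continuity provided by Definition~\ref{def:maxflow-o}(i), the standard fact that a locally absolutely continuous curve on $[a,b)$ with integrable derivative satisfies the Cauchy criterion as $t\uparrow b$ and the absolute-continuity-of-the-integral estimate up to $b$ yields that $\XX(\cdot,x)$ extends to an absolutely continuous map on the \emph{closed} interval $[0,T_{\sox}(x)]$; in particular the limit $z_x:=\lim_{t\uparrow T_{\sox}(x)}\XX(t,x)$ exists in $\R^d$. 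This proves the first assertion.

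For the second assertion, I would restrict to the $\Leb{d}$-full set of points $x\in\O$ for which, in addition, $\XX(t,x)\in\O$ for all $t<T_{\sox}(x)$ and Definition~\ref{def:maxflow-o}(iii) holds, and fix such an $x$ with $T_{\sox}(x)<T$. Then $z_x\in\overline\O$, being the limit of points of $\O$; and $z_x\notin\O$, since otherwise the continuity of the real-valued function $V_\O$ on $\O$ would force $V_\O(\XX(t,x))\to V_\O(z_x)<\infty$ as $t\uparrow T_{\sox}(x)$, contradicting $\limsup_{t\uparrow T_{\sox}(x)}V_\O(\XX(t,x))=\infty$. Since $\O$ is open, $\overline\O\setminus\O=\partial\Omega$, hence $z_x\in\partial\Omega$.

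I do not expect a serious analytic obstacle here: the substance of the statement is that global information on $\bb$ (integrability plus bounded compression) upgrades the merely \emph{local} absolute continuity of the trajectories to absolute continuity up to the blow-up time --- a feature that genuinely fails under only local bounds, as the trajectories in Proposition~\ref{prop:no-prop-blowup} have infinite length before $T_{\sox}(x)$. The only points deserving a little care are the Tonelli interchange in the displayed estimate (legitimate since the integrand is nonnegative and Borel, $\XX$ being a Borel map and $\bb$ Borel, and the pushforward $\XX(t,\cdot)_\#(\Leb{d}\res\{T_{\sox}>t\})$ being concentrated in $\O$), and the elementary extension-to-the-closed-interval lemma used in the second paragraph.
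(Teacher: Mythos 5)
Your proposal is correct and follows essentially the same route as the paper: the identical Tonelli/change-of-variables estimate $\int_\O\int_0^{T_{\sox}(x)}|\dot\XX|\,dt\,dx\le C_*\int_0^T\int_\O|\bb|\,dy\,dt$ via \eqref{eq:pasquetta1}, yielding integrability of $\dot\XX(\cdot,x)$ and hence absolute continuity up to $T_{\sox}(x)$ for a.e.\ $x$, and then the observation that the limit point cannot lie in $\O$ because $V_\O$ would stay bounded, contradicting Definition~\ref{def:maxflow-o}(iii). The extra details you supply (the Cauchy-criterion extension to the closed interval and the explicit $\overline\O\setminus\O=\partial\O$ step) are exactly the ones the paper leaves implicit.
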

\begin{Proof} By \eqref{eq:pasquetta1} we have that
\begin{eqnarray*}
  \int_{\O}\int_0^{T_{\O,\sXX}(x)}|\dot\XX(t,x)|\,dx\,dt
 &=&
   \int_{\O}\int_0^{T_{\O,\sXX}(x)}|\bb(t,\XX(t,x))|\,dx\,dt
\\
 &=&
 \int_0^T\int_{ \{T_{\O,\sXX}>t\} }|\bb(t,\XX(t,x))|\,dx\,dt\\
 &\leq&
C_* \int_0^T\int_{\O}|\bb(t,z)|\,dz\,dt.
\end{eqnarray*}
Hence $\XX$ satisfies \eqref{eq:pasquetta1}. Then $\XX(\cdot,x)$ is absolutely continuous in
$[0,T_{\sox}(x)]$ for $\Leb{d}$-a.e. $x\in\O$.
Since the $\limsup V_\O(\XX(t,x))$ as $t\uparrow T_{\sox}$ is $\infty$ whenever $T_{\sox}(x)<T$,
we obtain that in this case the limit of $\XX(t,x)$ as $t\to T_{\sox}(x)$ belongs to
$\partial\O$.
\end{Proof}

In the case $\O=\R^d$ we now prove in a simple criterion for global existence, which allows us 
to recover the classical result in the DiPerna-Lions theory on the existence of a global flow under the growth condition
\begin{equation}
\label{eqn:no-blow-up-cond-DPL-A}
\frac{|\bb(t,x)|}{1+|x|} \in L^1((0,T); L^1(\R^d))+ L^1((0,T); L^\infty(\R^d)).
\end{equation}
As in the previous section, we will use in the next theorem the simplified notation $T_{\sXX}$ for $T_{\R^d,\sXX}$.

\begin{theorem}[No blow-up criterion]\label{thm:noblowup}
Let  $\bb: (0,T) \times \R^d \to \R^d$ be a Borel vector field which satisfies (a-$\O$) and (\bbb-$\O$), and assume that the maximal
regular flow $\XX$ satisfies \eqref{eq:pasquetta1}.
Assume that $\rho_t\in L^\infty\bigl((0,T);L^\infty_+(\R^d)\bigr)$ is a weakly$^*$ continuous solution of the continuity equation 
satisfying the integrability condition
\begin{equation}
\label{no-blowup}
\int_0^T\int_{\R^d}\frac{|\bb(t,x)|}{1+|x|}\rho_t(x)\, dx\,dt<\infty.
\end{equation}
Then $T_{\sXX}(x)=T$ and $\XX(\cdot,x)\in AC([0,T];\R^d)$ for $\rho_0\Leb{d}$-a.e. $x\in\R^d$.
In addition, if the growth condition \eqref{eqn:no-blow-up-cond-DPL-A} holds, then $\rho_t$ satisfying
\eqref{no-blowup} exist for any $\rho_0\in L^1\cap L^\infty(\R^d)$ nonnegative, so that
$\XX$ is defined in the whole $[0,T] \times \R^d$.
\end{theorem}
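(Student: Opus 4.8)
The plan is to prove the two assertions in turn. For the first, the idea is to feed the solution $\rho_t$ into the superposition principle, recognise the resulting curves as trajectories of the maximal regular flow, and read off that the hitting time equals $T$; for the second, the idea is to produce a solution $\rho_t$ satisfying \eqref{no-blowup} by transporting $\rho_0$ along the maximal flow itself. So, assume $\rho_0\not\equiv0$ (otherwise the first claim is vacuous) and put $\mu_t:=\rho_t\Leb d$. Since \eqref{no-blowup} is exactly hypothesis \eqref{hp:super} for $\mu_t$, Theorem~\ref{thm:superpo} yields $\eeta\in\Measuresp{C([0,T];\R^d)}$ concentrated on absolutely continuous solutions of $\dot\eta=\bb(t,\eta)$ on the \emph{whole} of $[0,T]$, with $(e_t)_\#\eeta=\mu_t$ for every $t$; in particular all the $\mu_t$ carry the same mass, so $\llambda:=\eeta/\mu_0(\R^d)$ is a probability measure. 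Since $\rho_t\le C_0:=\|\rho\|_{L^\infty((0,T)\times\R^d)}$, we have $(e_t)_\#\llambda\le\big(C_0/\mu_0(\R^d)\big)\Leb d$, so $\llambda$ satisfies the hypotheses of Theorem~\ref{thm:nosplitting} with $\O=\R^d$; hence there are curves $\eta_x\in AC([0,T];\R^d)$ with $\dot\eta_x=\bb(t,\eta_x)$ a.e.\ and $\eta_x(0)=x$ such that $\eeta=\int\delta_{\eta_x}\rho_0(x)\,dx$.

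To identify the $\eta_x$ with $\XX$, fix rationals $\tau\in(0,T)$ and integers $R,j\ge1$ and set $B:=\{\rho_0\ge1/j\}\cap B_R$, which we may assume has positive measure. Then $\YY(t,x):=\eta_x(t)$, $(t,x)\in[0,\tau]\times B$, is a local regular flow (Definition~\ref{def:regflow}) starting from $B$ up to $\tau$ with values in $\O=\R^d$: part (i) holds $\Leb d$-a.e.\ on $B$ by the previous step, while part (ii) follows since $\chi_B\le j\rho_0$ on $B$, so that, extending $\YY(t,\cdot)$ to $\R^d$ as $x\mapsto\eta_x(t)$, one has $\YY(t,\cdot)_\#(\Leb d\res B)\le j\,\YY(t,\cdot)_\#(\rho_0\Leb d)=j\,(e_t)_\#\eeta=j\mu_t\le jC_0\,\Leb d$. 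Theorem~\ref{thm:main-o}(b) then gives $T_{\sXX}>\tau$ $\Leb d$-a.e.\ on $B$ and $\XX(\cdot,x)=\eta_x$ on $[0,\tau]$ for $\Leb d$-a.e.\ $x\in B$. Letting $\tau\uparrow T$, $R\uparrow\infty$ and $j\uparrow\infty$ along these countable families, we conclude that $T_{\sXX}(x)=T$ and $\XX(\cdot,x)=\eta_x\in AC([0,T];\R^d)$ for $\rho_0\Leb d$-a.e.\ $x$, which is the first assertion.

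For the second assertion, decompose $|\bb(t,x)|/(1+|x|)=g_1(t,x)+g_2(t,x)$ with $0\le g_1\in L^1((0,T);L^1(\R^d))$ and $0\le g_2\in L^1((0,T);L^\infty(\R^d))$, as allowed by \eqref{eqn:no-blow-up-cond-DPL-A}. Given nonnegative $\rho_0\in L^1\cap L^\infty(\R^d)$, not identically zero, by \eqref{eq:pasquetta1} and Remark~\ref{rmk:flow-distr-sol} the measures $\mu_t:=\XX(t,\cdot)_\#(\rho_0\Leb d\res\{T_{\sXX}>t\})$ form a weakly$^*$ continuous distributional solution of the continuity equation of the form $\mu_t=\rho_t\Leb d$ with $\rho_t\le C_*\|\rho_0\|_{L^\infty}$; note that this uses $\XX$ only on $\{T_{\sXX}>t\}$. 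Using $\mu_t\le C_*\|\rho_0\|_{L^\infty}\Leb d$ and $\mu_t(\R^d)\le\|\rho_0\|_{L^1}$ one bounds $\int_0^T\!\int_{\R^d}\frac{|\bb(t,x)|}{1+|x|}\rho_t(x)\,dx\,dt\le\int_0^T\!\int_{\R^d}(g_1+g_2)\,d\mu_t\,dt\le C_*\|\rho_0\|_{L^\infty}\int_0^T\|g_1(t,\cdot)\|_{L^1}\,dt+\|\rho_0\|_{L^1}\int_0^T\|g_2(t,\cdot)\|_{L^\infty}\,dt<\infty$, so $\rho_t$ satisfies \eqref{no-blowup}. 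Applying the first part with the particular choice $\rho_0(x):=(1+|x|)^{-d-1}$, which is strictly positive and lies in $L^1\cap L^\infty(\R^d)$, yields $T_{\sXX}(x)=T$ and $\XX(\cdot,x)\in AC([0,T];\R^d)$ for $\Leb d$-a.e.\ $x\in\R^d$, so $\XX$ is defined on a full-measure subset of $[0,T]\times\R^d$.

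The main obstacle is the identification step: one has to turn the abstract superposition measure $\eeta$ into genuine trajectories of $\XX$, and this is precisely where the localisation by $\{\rho_0\ge1/j\}\cap B_R$, the uniform density bound $\rho_t\le C_0$, and the comparison statement Theorem~\ref{thm:main-o}(b) are essential; the remaining steps are routine assembly of earlier results. In the second part the only point to watch is that $\rho_t$ be built from $\XX$ restricted to $\{T_{\sXX}>t\}$, so that the construction of $\rho_t$ does not presuppose the global existence it is used to establish.
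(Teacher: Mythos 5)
Your proof of the first assertion follows the paper's route exactly: superposition principle, then Theorem~\ref{thm:nosplitting}, then identification with the maximal regular flow; you merely make explicit, via the localization to $\{\rho_0\ge 1/j\}\cap B_R$ and Theorem~\ref{thm:main-o}(b), the step that the paper compresses into ``by uniqueness of the maximal regular flow, $\rho_t$ is transported by $\XX$'', and this filling-in is correct (including the observation that $T_{\sXX}\in(0,T]$ together with $T_{\sXX}>\tau$ for all rational $\tau<T$ forces $T_{\sXX}=T$). For the second assertion you genuinely diverge: the paper obtains the required solution $\rho_t$ by a smoothing argument (mollify $\bb$, solve along classical flows, pass to a weak limit keeping the $L^1\cap L^\infty$ bounds), whereas you build $\rho_t$ directly as the density of $\XX(t,\cdot)_\#(\rho_0\Leb{d}\res\{T_{\sXX}>t\})$ and invoke Remark~\ref{rmk:flow-distr-sol} to certify that this is a bounded, weakly$^*$ continuous distributional solution. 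Your construction is legitimate because \eqref{eq:pasquetta1} is a standing hypothesis of the theorem, and it has the merit of staying entirely within machinery already established and of correctly avoiding circularity by restricting to $\{T_{\sXX}>t\}$; the price is a reliance on Remark~\ref{rmk:flow-distr-sol}, whose verification of the distributional identity itself rests on the proper blow-up result of Theorem~\ref{thm:properblo}, so your route is slightly less self-contained than the paper's. The estimate you derive for \eqref{no-blowup}, splitting $|\bb|/(1+|x|)=g_1+g_2$ and pairing $g_1$ with the $L^\infty$ bound $\rho_t\le C_*\|\rho_0\|_{L^\infty}$ and $g_2$ with the mass bound $\mu_t(\R^d)\le\|\rho_0\|_{L^1}$, is exactly the combination the paper has in mind, and the final choice of a strictly positive $\rho_0\in L^1\cap L^\infty(\R^d)$ correctly upgrades the ``$\rho_0\Leb{d}$-a.e.'' conclusion to ``$\Leb{d}$-a.e.'', yielding the global definition of $\XX$.
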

\begin{Proof} 
For the first part of the statement we apply Theorem \ref{thm:superpo}
to deduce that $\rho_t$ is the marginal at time $t$ of a measure $\eeta\in\Measuresp{C([0,T];\R^d)}$ concentrated on absolutely continuous curves
$\eta$ in $[0,T]$ solving the ODE $\dot\eta=\bb(t,\eta)$.
We then apply Theorem \ref{thm:nosplitting}
to obtain that the conditional probability measures $\eeta_x$ induced by the map $e_0$ are Dirac masses for
$(e_0)_\#\eeta$-a.e. $x$, hence (by uniqueness of the maximal regular flow)
$\rho_t$ is transported by $\XX$. Notice that, as a consequence of the fact that $\eeta$ is concentrated on absolutely continuous curves
in $[0,T]$, the flow is globally defined on $[0,T]$, thus $T_{\sXX}(x)=T$.


For the second part, under assumption \eqref{eqn:no-blow-up-cond-DPL-A} the existence of 
a nonnegative and weakly$^*$ continuous solution of the continuity equation $\rho_t$ 
in $L^\infty\bigl((0,T);L^1\cap L^\infty(\R^d)\bigr)$ can be achieved by a simple smoothing
argument. So, the bound in $L^1\cap L^\infty$ on $\rho_t$ can be combined with
\eqref{eqn:no-blow-up-cond-DPL-A} to obtain \eqref{no-blowup}.
\end{Proof}

\begin{remark}{\rm
We remark that if only a local bound on the divergence is assumed as in Section~\ref{sec:local}, the growth assumption \eqref{eqn:no-blow-up-cond-DPL-A} is not enough to guarantee that the trajectories of the regular flow do not blow up. 
On the other hand, it can be easily seen that if we assume that $\bb$ satisfies (a-$\R^d$), (b-$\R^d$), \eqref{eqn:incompb-o-L} and $|\bb(t,x)|/(1+|x|) \in L^1((0,T); L^\infty(\R^d))$, every integral curve of $\bb$ cannot blowup in finite time and therefore the maximal regular flow satisfies $T_{\sXX}(x)=T$ and $\XX(\cdot,x)\in AC([0,T];\R^d)$ for a.e. $x\in\R^d$.

Theorem~\ref{thm:noblowup} is useful in applications when one constructs solutions by approximation. For instance, for the Vlasov-Poisson system in dimension $d=2$ and $3$, this result can be used to show that trajectories which transport a bounded solution with finite energy do not explode in the phase space (see~\cite{amcomfig2}).
}\end{remark}

\section*{Appendix: On the local character of the assumption (\bbb-$\O$)}

Here we prove that the property (\bbb-$\O$) of Section~\ref{sec:ass-a-c} is local, in analogy with the other assumptions ((a-$\O$) and the local bounds on
distributional divergence)
made throughout this paper. More precisely, the following assumption is equivalent to (\bbb-$\O$):

\begin{itemize}
\item[(\bbb'-$\O$)] for any $t_0\geq 0, x_0 \in \O$ there exists $\eps := \eps(t_0,x_0)>0$ such that for any nonnegative $\bar\rho\in L^\infty(\R^d)$ with compact support contained in $B_\eps(x_0)\subset \O$ and any closed interval $I=[a,b]\subset [t_0-\eps,t_0+\eps] \cap [0,T]$, the continuity equation
$$
\frac{d}{dt}\rho_t+{\rm div\,}(\bb\rho_t)=0\qquad\text{in $(a,b)\times\R^d$}
$$
has at most one weakly$^*$ continuous solution $I\ni t\mapsto\rho_t\in\mathcal L_{I,\O}$ with $\rho_a=\bar\rho$ and $\rho_t$ compactly supported in $B_\eps(x_0)$ for every $t\in [a,b]$.
\end{itemize}

\begin{lemma}\label{lemma:purely-local}
If the assumptions (a-$\O$) and (\bbb'-$\O$) on the vector field $\bb$ are satisfied, then (\bbb-$\O$) is satisfied.
\end{lemma}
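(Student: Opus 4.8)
The plan is to reduce the global uniqueness property (\bbb-$\O$) to a ``no‑splitting'' property for superposition solutions, and then to establish that property from the local uniqueness (\bbb'-$\O$) by running the decomposition procedure of \cite[Theorem~18]{bologna} (the one already invoked in the proof of Theorem~\ref{thm:nosplitting}) \emph{at the first branching time}, where the two competing solutions that are produced can be localised both in space and in time. First I would reduce to the Lagrangian picture: let $\rho^1,\rho^2\in\mathcal L_{I,\O}$ be weakly$^*$ continuous solutions of \eqref{eqn:ce} on $I=[a,b]$ with $\rho^1_a=\rho^2_a=\bar\rho$; rescaling, assume $\int\bar\rho=1$ (the case $\bar\rho\equiv0$ being trivial, since Theorem~\ref{thm:superpo} then forces $\mu^i_t(\R^d)=\mu^i_a(\R^d)=0$). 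Writing $\mu^i_t:=\rho^i_t\Leb{d}$, assumption (a-$\O$), the $L^\infty$ bound and the compactness of $\supp\rho^i$ in $I\times\O$ give \eqref{hp:super}, so Theorem~\ref{thm:superpo} provides $\eeta^1,\eeta^2\in\Probabilities{C(I;\R^d)}$, each concentrated on integral curves of $\bb$, with $(e_t)_\#\eeta^i=\mu^i_t$, with $(e_t)_\#\eeta^i\le C_0\Leb{d}$ for a suitable $C_0$, and — by Fubini — concentrated on curves with range in a fixed compact $K\Subset\O$. Set $\eeta:=\tfrac12(\eeta^1+\eeta^2)$, so $\eeta^i\le2\eeta$, and disintegrate $\eeta=\int\eeta_x\,d\mu_0(x)$ with respect to $e_a$, where $\mu_0=\bar\rho\Leb{d}$. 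It suffices to prove that $\eeta_x$ is a Dirac mass for $\mu_0$-a.e.\ $x$: then $\eeta=\int\delta_{\gamma_x}\,d\mu_0(x)$, and $\eeta^i_x\le2\delta_{\gamma_x}$ forces $\eeta^1=\eeta^2=\eeta$, i.e.\ $\rho^1=\rho^2$.

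To prove the Dirac property I would argue by contradiction, assuming $\{x:\eeta_x\text{ is not a Dirac mass}\}$ has positive $\mu_0$-measure. Setting $N_t:=\{x:(e_t)_\#\eeta_x\text{ is not a Dirac mass}\}$ and using continuity of $t\mapsto\iint(|\gamma(t)-\gamma'(t)|\wedge1)\,d\eeta_x\,d\eeta_x$ (so that branching, once present, is visible at rational times), one checks that $\bigcup_{t\in\Q\cap[a,b]}N_t$ coincides with $\{x:\eeta_x\text{ not Dirac}\}$, so the \emph{first branching time}
$s:=\inf\{q\in\Q\cap[a,b]:\mu_0(\bigcup_{t\in\Q\cap[a,q]}N_t)>0\}$
is well defined; moreover $\mu_0(N_t)=0$ for rational $t<s$ — hence $\mu_0$-a.e.\ the curves in $\eeta_x$ coincide on $[a,s]$, passing through a common point $w(x):=\gamma_x(s)$ — while for every $\tau>0$ there is a rational $t^*\in(s,s+\tau)$ with $\mu_0(N_{t^*})>0$. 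A measurable selection and pigeonhole argument (using a countable basis of balls, countably many rational thresholds, the finitely many balls $B_{\eps_j/4}(y_j)$ covering $K$ with $\eps_j:=\eps(s,y_j)$ from (\bbb'-$\O$), and a subsequence $t^*_k\downarrow s$) then yields, independently of $k$, two disjoint closed balls $\overline U_1,\overline U_2$, a constant $\delta_1>0$, a ball $B_{\eps_*}(y_*)$ among the finitely many, and positive-$\mu_0$-measure sets $S_k\subset\{x:w(x)\in B_{\eps_*/4}(y_*)\}$ on which $(e_{t^*_k})_\#\eeta_x$ charges each $U_i$ with mass $\ge\delta_1$.

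Next, since $\bb\in L^1((0,T)\times\O')$ for an open $\O'\Subset\O$ containing $K$, an Egorov argument applied to the (pointwise vanishing) moduli of continuity of the integral curves produces a set $\Gamma_0\subset C(I;\R^d)$ with $\eeta(\Gamma_0)$ arbitrarily close to $1$ carrying a uniform modulus; hence, once $t^*_k-s$ is small enough, every curve of $\Gamma_0$ that lies in $B_{\eps_*/4}(y_*)$ at time $s$ stays inside $B_{\eps_*}(y_*)$ on $[s,t^*_k]$, and after discarding from $S_k$ the small portion where $\eeta_x(\Gamma_0^c)\ge\delta_1/2$ the branching at $t^*_k$ persists within curves that never leave $B_{\eps_*}(y_*)$. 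On the remaining positive-measure set I would split each fibre $\eeta_x\res\Gamma_0$ according to whether $\gamma(t^*_k)\in\overline U_1$, renormalise, and integrate in $x$, obtaining $\tilde\nu^1,\tilde\nu^2\in\Probabilities{C([s,t^*_k];\R^d)}$ concentrated on integral curves of $\bb$ contained in $B_{\eps_*}(y_*)$, with $\tilde\nu^i\le(2/\delta_1)\eeta$ (so $(e_t)_\#\tilde\nu^i\le(2C_0/\delta_1)\Leb{d}$), with $(e_s)_\#\tilde\nu^1=(e_s)_\#\tilde\nu^2$ (all curves pass through $w(x)$ at time $s$), and with $(e_{t^*_k})_\#\tilde\nu^1$ supported in $\overline U_1$ while $(e_{t^*_k})_\#\tilde\nu^2$ gives positive mass to $U_2$. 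Thus $t\mapsto(e_t)_\#\tilde\nu^i$, $t\in[s,t^*_k]$, are two distinct weakly$^*$ continuous solutions of the continuity equation in $\mathcal L_{[s,t^*_k],\O}$, compactly supported in $B_{\eps_*}(y_*)$, with the same datum at $t=s$ — contradicting (\bbb'-$\O$) at $(t_0,x_0)=(s,y_*)$, since $[s,t^*_k]\subset[s-\eps_*,s+\eps_*]\cap[0,T]$ once $t^*_k-s\le\eps_*$. This contradiction proves the claim, and hence (\bbb-$\O$).

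The main obstacle is the simultaneous smallness required in the third step: one needs the interval $[s,t^*_k]$ short enough that no relevant integral curve escapes the small ball $B_{\eps_*}(y_*)$ — which, because $\bb$ is only $L^1$ in space-time, forces the Egorov/equicontinuity device rather than a Lipschitz bound — while keeping a quantitatively non-degenerate amount of branching visible inside that very interval, which is exactly what pins the construction to the first branching time $s$ and requires the uniform choice of $\overline U_1,\overline U_2,\delta_1$ along a subsequence $t^*_k\downarrow s$. Arranging the order of the choices so that these two constraints are compatible, and performing the measurable selections so that every set involved keeps positive $\mu_0$-measure, is the technical heart of the proof; the remaining steps (superposition, the $\eeta^i\le2\eeta$ domination, the verification that pushforwards of integral-curve measures solve the continuity equation) are routine.
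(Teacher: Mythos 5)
Your overall architecture (superposition, passing to $\eeta=\tfrac12(\eeta^1+\eeta^2)$, proving that the disintegrations $\eeta_x$ are Dirac masses by localising the first branching inside one of the finitely many space--time boxes supplied by (\bbb'-$\O$), and then invoking the local uniqueness) is the same as the paper's. The genuine gap is in how you localise. The paper works with the \emph{per-trajectory} first splitting time $\tau(x)$ and splitting point $B(x)$, covers the graph $\{(\tau(x),B(x))\}$ by the finite cover, and then restricts each fibre $\eeta_x$ to the curves in $E_{p,q}$ (those staying in $B_{\eps/2}(x_0)$ on a fixed rational interval $[p,q]$); non-Diracness of the restricted path measure survives by a purely qualitative support argument (the two witnessing curves are continuous and both pass through $B(x)$ at time $\tau(x)$), and Theorem~\ref{thm:nosplitting} finishes. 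You instead use a single \emph{global} first branching time $s$ and need, at a later time $t^*_k$, a quantitatively non-degenerate splitting: two disjoint closed balls $\overline U_1,\overline U_2$ and a mass threshold $\delta_1$, claimed to be chosen \emph{independently of $k$} along $t^*_k\downarrow s$ by pigeonhole. This step fails. The pigeonhole is over a countable family of (pairs of basis balls, thresholds) indexed by countably many $k$, so nothing forces any choice to recur; worse, for $\mu_0$-a.e.\ fixed $x$ the measures $(e_{t^*_k})_\#\eeta_x$ converge narrowly to $\delta_{w(x)}$ as $t^*_k\downarrow s$ (all curves of $\eeta_x$ pass through $w(x)$ at time $s$), so no fixed pair of disjoint closed balls can each carry $\eeta_x$-mass at least $\delta_1$ at time $t^*_k$ for infinitely many $k$; reverse Fatou then gives $\mu_0(S_k)\to0$, and you have no argument producing sets $S_k$ of positive measure for a $k$-independent choice.

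Without that uniformity, the two smallness requirements you yourself flag become genuinely circular: the Egorov set $\Gamma_0$ must satisfy $\eeta(\Gamma_0^c)\ll\delta_1(k)\,\mu_0(S_k)$ so that the branching survives the truncation, but the admissible length $t^*_k-s$ is then bounded by the equicontinuity threshold of $\Gamma_0$, which shrinks as $\eeta(\Gamma_0^c)$ does, while shrinking $t^*_k-s$ changes (and may degrade to zero) the quantities $\delta_1(k)$ and $\mu_0(S_k)$. The proposal does not break this circle, and the device offered to break it does not work. The fix is essentially the paper's: replace the global time $s$ by $\tau(x)$, localise in a box of the finite cover containing $(\tau(x),B(x))$ for a positive-measure set of $x$, and replace the Egorov truncation plus quantitative splitting by the restriction to $E_{p,q}$ together with the support argument, so that only qualitative non-Diracness (then handled by Theorem~\ref{thm:nosplitting} and (\bbb'-$\O$)) is needed.
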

\begin{Proof}
\noindent {\bf Step 1.} 
Let $\eeta\in\Probabilities{C([a,b];\R^d)}$, $0 \leq a < b\leq T$, be concentrated on absolutely continuous curves
$\eta\in AC( [a,b]; K)$ for some $K \subset \O$ compact, solving the ODE $\dot\eta=\bb(t,\eta)$ $\Leb{1}$-a.e. in $(a,b)$, and such that $(e_t)_\#\eeta \leq C \Leb{d}$ for any $t\in [0,T]$. 
We claim that the conditional probability measures $\eeta_x$ induced by the map $e_a$ are Dirac masses for $(e_a)_\#\eeta$-a.e. $x$.

To this end, for $s,t\in [a,b]$, $s<t$, we  denote by $\Sigma^{s,t}:C([a,b]; \R^d)\to C([s,t];\R^d)$ the map induced by restriction to $[s,t]$, namely $\Sigma^{s,t}(\eta)=\eta\vert_{[s,t]}$.
For $(e_a)_\#\eeta$-a.e. $x\in \R^d$ we define $\tau (x)$ the first splitting time of $\eeta_x$, namely the infimum of all $t> a$ such that $(\Sigma^{a,t})_\#\eeta_x$ is not a Dirac mass. We agree that $\tau (x) = T$ if $\eeta_x$ is a Dirac mass.
We also define the splitting point $B(x)$ as $\eta (\tau (x))$ for any $\eta \in \supp \eeta_x$.
By contradiction, we assume that the set $\{x \in \R^d:\tau (x)<T\}$ has positive $(e_a)_\#\eeta$ measure.

For every $t_0>0$ and $x_0 \in \R^d$ let $ \eps(t_0,x_0)>0$ be as in (\bbb'-$\O$).
By a covering argument, we can take a finite cover of $[a,b] \times K$ with sets of the form 
$$I_{t_0,x_0, \eps(t_0,x_0)} = (t_0-\eps(t_0,x_0),t_0+\eps(t_0,x_0)) \times B_{\eps(t_0,x_0)/2} (x_0).$$
We deduce that there exists $t_0>0$ and $x_0 \in \R^d$ such that the set
\begin{equation}
E_0 := \{x \in \R^d:\tau (x)<T, \; (\tau(x), B(x)) \in I_{t_0,x_0, \eps(t_0,x_0)} \}
\end{equation}
has positive $(e_a)_\#\eeta$ measure.

For every $p, q\in \Q$ with $a \leq p< q \leq b$
we define the open set
$$E_{p,q} := \{ \eta \in  C([a,b]; \R^d): \eta([p,q]) \subset B_{\eps(t_0,x_0)/2} (x_0) \}.$$

We claim that there exist a set $E_1 \subset E_0$ and $p,q\in \Q\cap [a,b]$, $p<q$ such that $(e_a)_\#\eeta (E_1)>0$ and
 for every $x\in E_1$ the measure $\Sigma^{p,q}_\#(1_{E_{p,q}} \eeta_x)$ is not a Dirac delta.  

To this end, it is enough to show that for a.e. $x \in E_0$ 
there exist $p_x,q_x\in \Q \cap [a,b]$, $p_x<q_x$ such that
$\Sigma^{p_x,q_x}_\#(1_{E_{p_x,q_x}} \eeta_x)$ is not a Dirac delta.  

Let us consider $\eta_1 \in \supp \eeta_x$; it satisfies $\eta_1(\tau(x)) = B(x) \in B_{\eps(t_0,x_0)/2} (x_0)$. Let $p_x,q_x$ be chosen such that $\eta_1([p_x, q_x]) \subseteq B_{\eps(t_0,x_0)/2} (x_0)$. By definition of $\tau(x)$ we know that $\Sigma^{p_x,q_x}_\# \eeta_x$ is not a Dirac delta. Hence there exists $\eta_2 \in C([a,b]; \R^d)$ such that $\eta_2 \in \supp(\eeta_x)$, $\eta_2(\tau(x) ) = B(x)$, $\eta_1(t)\neq \eta_2(t)$ for every $t\in [a,\tau(x)]$, $\eta_1(t)\neq \eta_2(t)$ for some $t\in [\tau(x),q_x]$. Up to reducing $q_x$, we can assume that $\Sigma^{p_x,q_x}(\eta_1), \Sigma^{p_x,q_x}(\eta_2)$ are curves whose image is contained in $B_{\eps(t_0, x_0)/2}(x_0)$, so that $\eta_1,\eta_2 \in E_{p_x,q_x}$, and which do not coincide. Moreover, since $\supp (\Sigma^{p_x,q_x}_\# \eeta_x ) = \Sigma^{p_x,q_x}(\supp \eeta_x )$,
we deduce that both $\Sigma^{p_x,q_x}(\eta_1)$ and $\Sigma^{p_x,q_x}(\eta_2)$ belong to the support of $\Sigma^{p_x,q_x}_\#( \eeta_x)$
and hence $\Sigma^{p_x,q_x}_\#(1_{E_{p_x,q_x}} \eeta_x) = 1_{\Sigma^{p_x,q_x} (E_{p_x,q_x})} \Sigma^{p_x,q_x}_\#\eeta_x$ is not a Dirac delta.  

%

Let $\delta>0$ be small enough so that $E_\delta=  E_1  \cap \{x: \eeta_x(E_{p,q})\geq \delta\} $ has positive $(e_a)_\#\eeta$-measure.
We introduce the probability measure $\tilde \eeta\in\Probabilities{C([a,b];\R^d)}$
$$\tilde \eeta := ((e_a)_\#\eeta \res E_\delta) \otimes \Big(\frac{1_{E_{p,q}} }{\eeta_x(E_{p,q})}\eeta_x\Big) = ((e_a)_\#\eeta \res E_\delta) \otimes \tilde \eeta_x,$$
which is nonnegative, and less than or equal to $\eeta/\delta$.
Moreover $\Sigma^{p,q}_\# \tilde \eeta \in\Probabilities{C([p,q];\R^d)}$ is concentrated on curves in $B_{\eps(t_0,x_0)/2}(x_0)$, and
$$
\Sigma^{p,q}_\# \tilde \eeta_x = \frac{\Sigma^{p,q}_\# (1_{E_{p,q}} \eeta_x)}{\eeta_x(E_{p,q})} \text{ is not a Dirac mass for $(e_a)_\#\eeta$-a.e. $x\in E_\delta$.}
$$
Applying Theorem~\ref{thm:nosplitting} with  $\llambda = \Sigma^{p,q}_\#\tilde \eeta$, $\Omega = B_{\eps(t_0,x_0)}(x_0)$, in the time interval $[p,q]$, and thanks to the local uniqueness of bounded, nonnegative solutions of the continuity equation in $I_{t_0,x_0, \eps(t_0,x_0)} $, which in turn follows from (\bbb'-$\O$), we deduce that the disintegration $\Sigma^{p,q}_\#\tilde \eeta_x$ of $\Sigma^{p,q}_\#\tilde \eeta$ induced by $e_a$ is a Dirac mass for $(e_a)_\#\eeta$-a.e. $x\in E_\delta$. By the uniqueness of the disintegration, we obtain a contradiction.

\smallskip
\noindent {\bf Step 2.} Let $\mu^1$ and $\mu^2$ be two solutions of the continuity equation as in (\bbb) with the same initial datum. Let $\eeta^1,\eeta^2\in\Probabilities{C([a,b];\R^d)}$ be the representation of $\mu^1$ and $\mu^2$ obtained through the superposition principle; they are concentrated on absolutely continuous integral curves of $\bb$ and they satisfy $\mu^i_t=(e_t)_\#\eeta^i$ for any $t\in [0,T]$, $i=1,2$. Since there exists a compact set $K \subset \O$ such that $\mu^i_t$ is concentrated on $K$ for every $t\in [0,T]$, $\eeta^i$ is concentrated on absolutely continuous curves contained in $K$ for $i=1,2$. Then by the linearity of the continuity equation  $(e_t)_\# [ (\eeta_1+ \eeta_2)/2 ] = (\mu^1_t+\mu^2_t )/2$ is still a solution to the continuity equation; by Step 1 we obtain that $(\eeta^1_x+ \eeta^2_x) /2$ are Dirac masses for $\mu_0$-a.e. $x$. This shows that $\eeta^1_x= \eeta^2_x$ for $\mu_0$-a.e. $x$ and therefore that $\mu^1_t=\mu^2_t$ for every $t\in [0,T]$.
\end{Proof}

\end{document}